\newcommand{\Cc}{\mathbb{C}} 
\newcommand{\Pp}{\mathbb{P}}
\newcommand{\Zz}{\mathbb{Z}}
\newcommand{\Qq}{\mathbb{Q}} 
\newcommand{\Ff}{\mathbb{F}}
\theoremstyle{plain}
\newtheorem{theorem}{Theorem}[section]    % theorem with number
\newtheorem{twisting lemma}[theorem]{Twisting lemma}
\newtheorem{lemma}[theorem]{Lemma}       % lemma with number
\newtheorem{proposition}[theorem]{Proposition}  % lemma with number
\newtheorem{corollary}[theorem]{Corollary}   % lemma with number
\theoremstyle{definition}
\newtheorem{definition}[theorem]{Definition}      % lemma with number
\newtheorem{remark}[theorem]{Remark}   % theorem without number
\newtheorem{example}[theorem]{Example}
\newtheorem{BB problem}[theorem]{Pre-BB problem}
\newtheorem{question}[theorem]{Question}
\newtheorem{notation}[theorem]{Notation}
\font\sevenrm=cmr10 scaled 700
\def\sep{{\scriptsize\hbox{\rm sep}}}
\def\Aut{\hbox{\rm Aut}}
\def\Spec{\hbox{\rm Spec}}
\def\Gabs{\hbox{\rm Gal}}
\def\Gal{\hbox{\rm Gal}}
\def\Cen{\hbox{\rm Cen}}
\def\Inn{\hbox{\rm Inn}}
\def\Nor{\hbox{\rm Nor}}
\def\cd{\hbox{\rm cd}}
\def\Out{\hbox{\rm Out}}
\newcommand{\LL}{\mathbb{L}}
\newcommand{\GG}{\mathbb{G}}
\def\gp{\Gamma}
\def\cm{\hbox{\hbox{\rm C}\kern-5pt{\raise 1pt\hbox{$|$}}}}
\def\lhfl#1#2{\smash{\mathop{\hbox to 12mm{\leftarrowfill}}
\limits^{#1}_{#2}}}
\def\rhfl#1#2{\smash{\mathop{\hbox to 12mm{\rightarrowfill}}
\limits^{#1}_{#2}}}
\def\build#1_#2^#3{\mathrel{
\mathop{\kern 0pt#1}\limits_{#2}^{#3}}}
\def\htrait#1#2{\smash{\mathop{\hbox to 12mm{\hrulefill}}
\limits^{#1}_{#2}}}
\def\sxbullet{{\raise 2pt\hbox{\bf .}}}
\begin{document}
%%%%%%%%%%%%%%%%%%%%%%%%%%%%%%%%%%%%%%%%%%%%%%%%%%%
%%%%%%%%%%%%%%%%%%%%%%%%%%%%%%%%%%%%%%%%%%%%%%%%%%%

\title[Pre-Galois Theory]{Pre-Galois Theory}

\author{Pierre D\`ebes}

\author{David Harbater}

\email{Pierre.Debes@univ-lille.fr}

\email{harbater@math.upenn.edu}

\address{Laboratoire Paul Painlev\'e, Math\'ematiques, Universit\'e de Lille, 59655 Villeneuve d'Ascq Cedex, France}

\address{Department of Mathematics, University of Pennsylvania, Philadelphia, PA 19104-6395, USA. }

\subjclass[2010] {Primary 12F10, 12F12 14H05 ; Secondary 14H25, 11G99}

\keywords{Galois extensions, Galois groups, Inverse Galois theory, descent theory, rigidity theory}

\date{November 4, 2019}

\thanks{The first author was supported in part by the Labex CEMPI  (ANR-11-LABX-0007-01).  The second author was supported in part by NSF grants DMS-1463733 and DMS-1805439.}

\begin{abstract} 
We introduce and study a class of field extensions that we call {\it pre-Galois}; viz.\ extensions that become Galois after some linearly disjoint Galois base change $L/k$.  Among them are {\it geometrically Galois} extensions of $\kappa(T)$, with $\kappa$ a field: extensions that become Galois and remain of the same degree over $\overline \kappa(T)$. We develop a pre-Galois theory that includes a Galois correspondence, and investigate the corresponding variants of the inverse Galois problem. We provide answers in situations where the classical analogs are not known. In particular, for every finite simple group $G$, some power $G^n$ is a geometric Galois group over $k$, and is a pre-Galois group over $k$ if $k$ is Hilbertian. For every finite group $G$, the same conclusion holds for $G$ itself ($n=1$) if $k=\Qq^{\rm ab}$ and $G$ has a weakly rigid tuple of conjugacy classes; and then $G$ is a regular  Galois group over an extension of $\Qq^{\rm ab}$ of degree dividing the order of $\Out(G)$.  
We also show that the inverse problem for pre-Galois extensions over a field $k$ (that every finite group is a pre-Galois group over $k$) is equivalent to the {\it a priori} stronger inverse Galois problem over $k$, and similarly for the geometric vs.\ regular variants.
\end{abstract}

\maketitle

%%%%%%%%%%%%%%%%%%%%%%%%%%%%%%%%%%%%%%%%%%%%%%%%%%%%%%%%%%

\section{Introduction} \label{sec:intro}

Given a field $k$, the {\em inverse Galois problem} over $k$ asks:

\smallskip

\centerline{(IGP/$k$): Is every finite group a Galois group over $k$?}

\smallskip

The answer is negative for many fields (e.g., algebraically closed fields, finite fields, and local fields), though the answer is believed to be affirmative for global fields, and more generally for Hilbertian fields.   This remains open, however, for global fields, and in particular for $\Qq$.  The related {\em regular inverse Galois problem} over $k$ asks:
\smallskip

\centerline{(RIGP/$k$): Is every finite group a regular Galois group over $k$?}

\smallskip

\noindent
 Recall that a ``regular Galois group over $k$'' is the Galois group of a Galois field extension $F$ of $k(T)$ that is $k$-regular; i.e., in which $k$ is algebraically closed.

The latter question is conjectured to have an affirmative answer for {\em all} fields; and if this is the case for a Hilbertian field $k$, then the question (IGP/$k$) also has an affirmative answer over $k$.  In fact, most known realizations of simple groups as Galois groups over $\Qq$ have been obtained by finding a regular realization of that group over $\Qq$, typically by the method of rigidity.  An affirmative answer to RIGP is known for fields that are algebraically closed (\cite{Ha1}, Corollary~1.5), for complete discretely valued fields (\cite{Ha2}, Theorem~2.3, Corollary~2.4), for the field of totally real algebraic numbers \cite{DeFr94}, its $p$-adic analogs \cite{De95}, and more generally for the class of fields that Pop introduced and called ``large'' (see \cite{PopLarge}; these fields are also called ``ample'').  But in general the problem remains wide open.

In this paper we raise several related but more accessible questions,
concerning extensions that become Galois after a base change.

We call a finite field extension $F/k(T)$ {\em geometrically Galois} if $F \otimes_k \overline k $ is a Galois field 
extension of $\overline k(T)$;
and in that case we say that $\Gal(F \otimes_k \overline k/\overline k(T))$
is a {\em geometric Galois group} over $k$.  
Note that $F/k(T)$ is necessarily $k$-regular and separable.
The {\em geometric inverse Galois problem} asks the following, which is weaker than RIGP/$k$:

\smallskip

\centerline{(Geo-IGP/$k$): Is every finite group a geometric Galois group over $k$?}

\smallskip

Observe that if $F/k(T)$ is geometrically Galois, then 
there is a finite Galois extension $L/k$ such that $F \otimes_k L$ is 
an $L$-{\em regular} Galois field extension of $L(T)$.
This suggests:

\begin{question} \label{geom_Gal_base_chg}
If $G$ is a geometric Galois group over $k$, for how small an extension $L/k$ is $G$ a regular Galois group over $L$?
\end{question}

More generally, a field extension $E/k$ will be called {\em potentially Galois} with {\em Galois group} $G$ if there is a finite field extension $L/k$ such that 

\smallskip

\centerline{$(*)$ \ \ $E \otimes_k L$ is a Galois field extension of $L$ with Galois group $G$.}  

\smallskip

\noindent If $L$ can be chosen to be Galois over $k$, we call $E/k$ {\em pre-Galois}.  In the above situations, we say that $G$ is a {\em potential Galois group} (resp.\ {\em pre-Galois group}) over $k$, and we also say that the extension $E/k$ is potentially Galois (resp.\ pre-Galois) {\em over} $L$.
For example the extension $\Qq(\root{3} \of {2})/\Qq$ is pre-Galois with group $C_3$, by taking $L = \Qq(\zeta_3)$.

The following question, the {\it pre-inverse Galois problem} over $k$, naturally arises:
\smallskip

\centerline{(Pre-IGP/$k$): Is every finite group a pre-Galois group over $k$?}

\smallskip
Every geometrically Galois extension of $k(T)$ is pre-Galois over $k(T)$ (Proposition \ref{prop:geom vs pot}(\ref{geom Gal pre})). Also note that every pre-Galois extension of a field $k$ is potentially Galois over $k$; and every potentially Galois extension of a field $k$ is separable over $k$.

\smallskip
Our questions and answers concerning these notions are of three types.

\subsection{Pre-inverse Galois problems} \label{subsec:pre-inverse}
We give partial answers to the above problems, Geo-IGP and Pre-IGP.  We first show that over a Hilbertian field, every finite group is a potential Galois group (Proposition~\ref{hilb flds pot Gal cond}). Regarding Geo-IGP, we show the following 
over an arbitrary field $k$ (see Corollary \ref{cor:power-simple}):

\noindent
(a) {every finite group is the quotient of a geometric Galois group}, and 

\noindent
(b) {if $G$ is a simple group then some power $G^n$ is a geometric Galois group}. 

\noindent
Using Proposition \ref{intro:IGP-implications}, if $k$ is Hilbertian then one can deduce the corresponding assertions (a) and (b)
for Pre-IGP; i.e., with ``pre-Galois group'' replacing ``geometric Galois group'' (see Corollary \ref{cor:power-simple}). 

Under the assumption that the exact sequence $1 \to Z(G) \to G \to \Inn(G) \to 1$ is split, Corollary~\ref{preGal Gal special gps} shows that, over an arbitrary field, if $G$ is a pre-Galois group, then it is a Galois group.  Under this same assumption (or under the alternative hypothesis that $\cd(k) \le 1$), Corollary~\ref{thm:fod}(\ref{geom gp bound}) shows that, over a field $k$, if $G$ is a geometric Galois 
group with $\Out(G)$ trivial, then it is a regular Galois group.  
Even without the above assumptions, 
Corollary~\ref{thm:fod}(\ref{geom gp wk bound}, \ref{geom gp bound}) answers Question~\ref{geom_Gal_base_chg}, by providing an explicit bound on $[L:k]$.  

This has the following consequence, observed to us by J. K\"onig:
if (Pre-IGP/$k$) holds (for all finite groups) then (IGP/$k$) holds (for all finite groups); thus the two problems (Pre-IGP/$k$) and (IGP/$k$) are equivalent (Corollary~\ref{preIGP=IGP}). Similarly, the two problems (Geo-IGP/$k$) and  (RIGP/$k$) (for all finite groups) are equivalent (see Remark~\ref{GeoIGP=RIGP}).

In the case that $k = \Qq^{\rm ab}$, and $G$ is a finite group with a weakly rigid tuple of conjugacy classes (see Section~\ref{ssec:ext-fom}), we show in Corollary~\ref{cor: wk rig}
that Geo-IGP and Pre-IGP have affirmative answers for $G$ over $k$, along with a weak form of RIGP and IGP.  A generalization appears at Theorem~\ref{thm:main wk rig}.

\subsection{Pre-Galois theory} \label{subsec:pre-Gal}

It is easily seen that pre-Galois extensions are not always Galois: e.g., $\Qq(\root{3} \of {2})/\Qq$ is 
pre-Galois but not Galois. Similary $\Qq(T^{1/n})/\Qq(T)$ is geometrically Galois but not Galois. 
We consider these other questions, about how the new Galois properties compare to each other:

\begin{question} \label{always_pp}
Let $k$ be any field.
\renewcommand{\theenumi}{\alph{enumi}}
\renewcommand{\labelenumi}{(\alph{enumi})}
\begin{enumerate}
\item \label{always_pot} 
Is every finite separable field extension of $k$ potentially Galois?
\item \label{pot_pre}
Is every potentially Galois extension of $k$ pre-Galois?
\item \label{preGal geomGal}
If every $k$-regular pre-Galois extension of $k(T)$ geometrically Galois? 
\end{enumerate}
\end{question}

We show that the answers to parts (\ref{always_pot}), (\ref{pot_pre}), and (\ref{preGal geomGal}) of Question~\ref{always_pp} are ``no'' in general, 
even for Hilbertian fields (see Propositions~\ref{prop:simple_2d},  \ref{hilb flds pot Gal cond}(\ref{potentially but no pre})
and  \ref{prop:geom vs pot}(\ref{pre not geom})). 
\smallskip

We also consider the following questions, which contribute to a ``pre-Galois theory'': 

\begin{question} \label{inv_ppGal}
Let $E/k$ be any finite separable field extension.
\renewcommand{\theenumi}{\alph{enumi}}
\renewcommand{\labelenumi}{(\alph{enumi})}
\begin {enumerate}
\item \label{unique_gp}
If $E/k$ is pre-Galois, is the pre-Galois group unique?
\item \label{minimal}
What are the minimal field extensions (resp.\
minimal Galois field extensions) $L_0/k$ 
such that $E \otimes_k L_0$ is a Galois field extension of $L_0$?
Does {\em every} field extension $L/k$ satisfying condition $(*)$ contain a unique such minimal extension?
\item \label{pot Gal cor q}
If $E/k$ is potentially Galois with group $G$, is there an analog of the usual Galois correspondence that relates the sub-extensions of $E/k$ to subgroups of $G$?
\end{enumerate}
\end{question}

We show that the answer to Question~\ref{inv_ppGal}(\ref{unique_gp}) is generally ``no'' (see Example~\ref{ex:two-normal-complements}).  In fact, it is even possible for an extension to be pre-Galois with respect to one group and to be potentially Galois but not pre-Galois with respect to a different group (see Example~\ref{preGal and potGal ex}); and for an extension of $k(T)$ to be geometrically Galois with respect to one group and to be pre-Galois but not geometrically Galois with respect to a different group (see Example~\ref{geom also pre other gp}).  But we show that the answer to Question~\ref{inv_ppGal}(\ref{unique_gp}) is ``yes'', i.e.\ there is a unique pre-Galois group, if the extension has a pre-Galois group that is simple (see Proposition~\ref{prop:simple}).  
Theorem~\ref{prop:potentiallyGoG_characterization} gives an explicit answer to Question~\ref{inv_ppGal}(\ref{minimal}). 
An answer to Question~\ref{inv_ppGal}(\ref{pot Gal cor q})
is given in Proposition~\ref{prop:Galois_correspondence}.
\smallskip

\subsection{Lifting problems} \label{subsec: lifting}

An open question in Galois theory (called the arithmetic lifting problem or the Beckmann-Black problem) asks whether for every finite Galois extension $E/k$ there is a regular Galois extension $F/k(T)$ such that $E/k$ is obtained by specializing $T$ to some element of $k$.  This is known to hold in some cases (e.g., \cite{Beckmann}, \cite{elena-black1}), and no counterexamples are known.  It was shown in \cite[Proposition 1.2]{DeBB} that 
if this conjecture holds for all $k$, then RIGP also holds over every field.  
A more accessible question is this:

\begin{question} \label{geomBB}
Given a finite group $G$ and a Galois field extension $E/k$ of group $G$, is there a geometrically Galois extension $F/k(T)$ that has group $G$ and that specializes to $E/k$ at some $t_0 \in k$?
\end{question}

An affirmative answer is known for $k$ an ample (large) field such as $\Qq_p$ and $k_0((t))$: see \cite{DeBB}, \cite{MR1745009}, \cite{MR1841345} (with the difference that in the last two references, it is the original form of the Beckmann-Black problem that is solved).  
Theorem~\ref{geom BB thm} extends this affirmative answer under the weaker assumption that $G$ is the Galois group of a $k$-regular extension $F/k(T)$ such that the corresponding cover $X\rightarrow \Pp^1$ has a $k$-rational point above an unbranched point $t_0\in \Pp^1(k)$ (this assumption holds in particular if $k$ is ample; see \cite[Remark 4.3]{DeDes1}).  Theorem~\ref{geom BB thm} shows further that there is a choice of the geometrically Galois extension  $F/k(T)$ in Question~\ref{geomBB} having the additional property that the constant extension in the Galois closure agrees with the given extension $E/k$. Remark \ref{rem:constant-extension}(\ref{constant_extension}) discusses the value of this additional conclusion.

\smallskip

Following this introduction, we devote Section~\ref{sec:potentially-galois} to discussing pre-Galois theory over a field $k$.  Then in Section~\ref{sec:function-fields}, we consider the case of function fields, and in particular extensions that are geometrically Galois.

\smallskip

We thank Joachim K\"onig for helpful comments about this manuscript, especially concerning the relationship between the usual inverse problems in Galois theory and the ones that we consider here. We also thank Bob Guralnick for providing several group-theoretical examples and counterexamples to us.

%%%%%%%%%%%%%%%%%%%%%%%%%%%%%%%%%%%%%%%%%%%%%%

\section{Pre-Galois extensions} \label{sec:potentially-galois}

Section \ref{subsec: struc pre} introduces and investigates the notions of potentially Galois and pre-Galois extensions. Some first questions from Section \ref{sec:intro} are answered in this first subsection. More are answered in Section \ref{subsec:ex} which gives further examples and counterexamples. The analog for potentially Galois extensions of the classical Galois correspondence appears in Section \ref{pot Gal corresp}. Finally Section \ref{subsec: Hopf} compares our pre-Galois theory with the previously introduced Hopf Galois theory.

\subsection{Structure of pre-Galois extensions} \label{subsec: struc pre}

Theorem  \ref{prop:potentiallyGoG_characterization} and Corollary \ref{prop:potentiallyGoG_characterization_cor} are the main structural conclusions of this subsection. Corollaries \ref{corollary:potential_Galois_over_Galois_group}, \ref{preGal Gal special gps}, \ref{preIGP=IGP} provide general implications towards the pre-inverse Galois problem. 
\vskip 1mm

Let $E/k$ be a finite extension of fields.  Given an overfield  $L$ of $k$ (not necessarily algebraic), and a finite group $G$, we say that $E/k$ is {\em potentially Galois over $L$ with group} $G$ if $E \otimes_k L$ is a Galois field extension of $L$ of Galois group $G$; and in this situation, 
if $L/k$ is a (not necessarily finite) Galois field extension, we say that $E/k$ is {\em pre-Galois over $L$ with group} $G$.  
Note that if $E/k$ is potentially Galois over $L$ then $E/k$ is necessarily separable, since the base change $E \otimes_k L$ is separable over $L$.  So we will restrict attention to 
finite separable field extensions $E/k$.   There is thus a Galois closure $\widehat E$ of $E/k$.

Given any overfield $L/k$, we may embed $E$ into a separable closure $L^{\rm sep}$ of $L$ as a $k$-algebra, and so we may take the compositum of fields $EL$ in $L^{\rm sep}$.  The extension $E/k$ is then potentially Galois over $L$ with group $G$ if and only if 
the field extension $EL/L$ is Galois with group $G$ (and so this does not depend on the choice of $k$-embedding $E \hookrightarrow L^{\rm sep}$).  

Also, $E/k$ is potentially Galois over $L$ if and only if $EL/L$ is Galois of degree equal to $[E:k]$; this equality is equivalent to $E$ and $L$ being linearly disjoint over $k$.
As another equivalent condition, $E/k$ is potentially Galois over $L$ if and only if the automorphism group ${\rm Aut}(EL/L)$ is of order equal to $[E:k]$.

\begin{lemma} \label{la:pp_conditions}
Let $E/k$ be a finite field extension, and let $L$ be an overfield of $k$.
\renewcommand{\theenumi}{\alph{enumi}}
\renewcommand{\labelenumi}{(\alph{enumi})}
\begin{enumerate}
\item \label{sep}
If $E/k$ is potentially Galois over $L$ then the compositum $EL$ contains the Galois closure $\widehat E$ of $E$ over $k$.
\item \label{contains_Gal_cl} 
If $E/k$ is pre-Galois over $L$ then $EL/k$ is Galois.
\end{enumerate}
\end{lemma}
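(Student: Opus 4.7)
The plan is to prove (a) by a dimension/embedding-counting argument, and then to derive (b) as a short corollary of (a).

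For part (a), I would fix a $k$-embedding $E \hookrightarrow L^{\rm sep}$ and form $EL$ inside $L^{\rm sep}$. The assumption that $E/k$ is potentially Galois over $L$ is equivalent, as noted just before the lemma, to the linear disjointness condition $[EL:L]=[E:k]$ together with $EL/L$ being Galois. The key observation is then a bijectivity statement: the restriction map
\[
\rho \colon \mathrm{Emb}_L(EL,L^{\rm sep}) \longrightarrow \mathrm{Emb}_k(E,L^{\rm sep})
\]
is injective (since an $L$-embedding of $EL$ is determined by its values on $L$ and on $E$), and both sets have cardinality $[E:k]$ because $E/k$ is separable and $[EL:L]=[E:k]$, so $\rho$ is a bijection.

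From bijectivity, every $k$-embedding $\sigma\colon E \hookrightarrow L^{\rm sep}$ extends to an $L$-embedding $\widetilde\sigma\colon EL \hookrightarrow L^{\rm sep}$. But $EL/L$ is Galois, so the image of any $L$-embedding of $EL$ into $L^{\rm sep}$ is $EL$ itself; in particular $\sigma(E)\subseteq EL$. Since $\widehat E$ is the compositum inside $L^{\rm sep}$ of all $k$-conjugates of $E$, I conclude $\widehat E \subseteq EL$, which is (a).

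For part (b), part (a) gives $\widehat E \subseteq EL$, and trivially $L \subseteq EL$, so $EL = \widehat E\cdot L$. Since $\widehat E/k$ is Galois by construction and $L/k$ is Galois by the pre-Galois hypothesis, their compositum $\widehat E\cdot L$ is Galois over $k$; that is, $EL/k$ is Galois. The one step requiring care is the embedding count in part (a), as one must be careful that linear disjointness is being used correctly to match cardinalities; everything else is formal.
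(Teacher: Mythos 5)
Your argument is correct and is essentially the paper's proof translated into the language of embeddings rather than roots of a minimal polynomial: the bijectivity of your restriction map $\rho$ (an injection between sets of equal cardinality $[E:k]=[EL:L]$) is exactly the paper's observation that the minimal polynomial of a primitive element of $E/k$ stays irreducible over $L$, and the normality of $EL/L$ then forces all $k$-conjugates of $E$ (equivalently, all roots) into $EL$, giving $\widehat E \subseteq EL$. Part (b) is derived from $EL=\widehat E L$ exactly as in the paper.
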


\begin{proof}
For part~(\ref{sep}), the extension $E/k$ is separable and so has a primitive element $y_1$.  Let $p(Y)=\prod_{i=1}^m (Y-y_i)\in k[Y]$ be the minimal polynomial of $y_1$ over $k$, where $y_i \in \widehat E$ and $m = [E:k]$.
So $E=k(y_1)$ and $\widehat E = k(y_1,\ldots,y_m)$.  
As $EL/L$ is Galois of degree $m$, $p(Y)$ is irreducible over $L$ and $L(y_1) = EL = L(y_1,\ldots,y_d)$, whence $EL=\widehat E L$.  Thus $EL \supset \widehat E$.

Part~(\ref{contains_Gal_cl}) then follows, using $EL=\widehat E L$, since the compositum of Galois extensions is Galois.
\end{proof}

Recall that a subgroup $U$ is a {\it complement} of another subgroup $V$ in a group $\Gamma$ if every element $g\in \Gamma$ can be uniquely written as $g=u v$ with $u\in U$ and $v\in V$. If $\Gamma$ is finite, this is equivalent to asserting that $|\Gamma| = |U|\cdot|V|$ and $U\cap V = \{1\}$.  It is also equivalent to $U$ acting freely and transitively on the set of left cosets of $\Gamma$ modulo $V$, by left multiplication. The group $\Gamma$ is then said to be the {\it Zappa-Sz\'ep product} (ZS-product for short) of $U$ and $V$. If in addition $U$ is normal in $G$, then the group $\Gamma$ is the semi-direct product of $U$ and $V$ with $V$ acting on $U$ by conjugation. 

\begin{proposition} \label{ZS_equiv}
Let $E/k$ be a finite separable field extension.
Let $N/k$ be a Galois extension such that $E\subset N$, and let $H$ be an arbitrary subgroup of $\Gal(N/k)$.  Then the following 
conditions are equivalent:
\renewcommand{\theenumi}{\roman{enumi}}
\renewcommand{\labelenumi}{(\roman{enumi})}
\begin{enumerate}
\item \label{pot_Gal_over_ff}
$E/k$ is potentially Galois over the fixed field $N^H$ of $H$ in $N$
 and $N=E N^H$.
\item \label{Gal_ZS}
$\Gal(N/k)$ is the ZS-product of $H$ and $\Gal(N/E)$.
\end{enumerate}
\end{proposition}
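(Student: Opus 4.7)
My plan is to translate both conditions into group-theoretic statements about subgroups of $\Gamma := \Gal(N/k)$ via the classical Galois correspondence, after which the equivalence falls out of the definition of ZS-product recalled just above. I expect no serious obstacle; the proof is a short bookkeeping exercise in degrees.

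First I would set $V := \Gal(N/E)$, so that $E = N^V$. Applying the Galois correspondence for $N/k$, the compositum $E \cdot N^H$ (inside $N$) is identified with the fixed field $N^{V \cap H}$, and hence $[E \cdot N^H : N^H] = |H|/|V \cap H|$. In particular $N = E \cdot N^H$ holds if and only if $V \cap H = \{1\}$.

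With these identifications, I would use the reformulation (from the paragraph preceding the proposition) that $E/k$ is potentially Galois over $N^H$ if and only if $E \cdot N^H / N^H$ is Galois of degree $[E:k]$. Under the hypothesis $N = E \cdot N^H$ in (i), the Galois condition is automatic because $N/N^H$ is Galois with group $H$; only the degree equality $|H| = [\Gamma:V]$, equivalently $|V| \cdot |H| = |\Gamma|$, remains. Combined with $V \cap H = \{1\}$, this is exactly the ZS-product condition (ii). Conversely, starting from (ii), the two identities $V \cap H = \{1\}$ and $|V| \cdot |H| = |\Gamma|$ deliver respectively $N = E \cdot N^H$ and the degree equality $[E \cdot N^H : N^H] = [E:k]$ (i.e.\ the linear disjointness of $E$ and $N^H$ over $k$), while the Galois property of $N/N^H$ with group $H$ yields the remaining Galois assertion in (i).
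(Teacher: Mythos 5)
Your proposal is correct and follows essentially the same route as the paper: both translate the potential-Galois condition into the degree equality $[EN^H:N^H]=[E:k]$, identify $N=EN^H$ with $\Gal(N/E)\cap H=\{1\}$ via the Galois correspondence, and conclude with the order count $|\Gal(N/k)|=|\Gal(N/E)|\cdot|H|$. The only cosmetic difference is that you package the two implications into a single chain of equivalences, whereas the paper writes out each direction separately.
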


\begin{proof}
(\ref{pot_Gal_over_ff}) $\Rightarrow$ (\ref{Gal_ZS}): Assume that (\ref{pot_Gal_over_ff}) holds. 
By $E N^H = N$, together with $E=N^{\hbox{\sevenrm Gal}(N/E)}$, it follows that $H\cap \Gal(N/E)=\{1\}$.  We have
$$|H| = [N:N^H] = [EN^H: N^H]=[E:k].$$
\noindent
The equality $|\Gal(N/k)|= |\Gal(N/E)| \cdot |H|$ follows.
\vskip 1mm

(\ref{Gal_ZS}) $\Rightarrow$ (\ref{pot_Gal_over_ff}): Assume that (\ref{Gal_ZS}) holds. We have in particular 
$$|H| = |\Gal(N/k)|/|\Gal(N/E)| = [E:k].$$

\noindent
Now $E=N^{\hbox{\sevenrm Gal}(N/E)}$ and $\Gal(N/E) \cap H = \{1\}$; hence $N=EN^H$.  Thus $EN^H/N^H$ is Galois with group $H$, and its degree is equal to $[E:k]$.  
So $E/k$ is potentially Galois over $N^H$.
\end{proof}

\begin{proposition} \label{prop:potentially_characterization}
Let $k,E,N$ be as in Proposition~\ref{ZS_equiv}, and let $L$ be an overfield of $k$ such that $N \subset EL$.
Then the restriction map ${\rm res}: \Aut(EL/L)\rightarrow \Gal(N/k)$ identifies the group $\Aut(EL/L)$ to a subgroup $H$ of $\Gal(N/k)$.  Moreover $E/k$ is potentially Galois over $L$ if and only if the equivalent conditions of Proposition~\ref{ZS_equiv} hold.
\end{proposition}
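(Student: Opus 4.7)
The plan is to establish injectivity of ${\rm res}$ first, and then to deduce the equivalence via a cardinality count inside $\Gal(N/k)$, using form~(\ref{Gal_ZS}) of Proposition~\ref{ZS_equiv}. The pivotal observation is that since $E \subset N \subset EL$, we have $EL = NL$, which is what makes restriction from $EL$ to $N$ especially well-behaved.

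First I would verify that ${\rm res}$ is a well-defined homomorphism to $\Gal(N/k)$: any $\sigma \in \Aut(EL/L)$ is a $k$-automorphism of $EL$ that stabilizes $N$ because $N/k$ is Galois with $N \subset EL$, so $\sigma|_N \in \Gal(N/k)$. For injectivity, if $\sigma$ restricts to the identity on $N$ then $\sigma$ fixes both $N$ and $L$ pointwise, hence fixes $NL = EL$, and therefore $\sigma = \mathrm{id}$. This gives $|\Aut(EL/L)| = |H|$ and shows that each $\sigma \in H$ has a unique lift $\tilde\sigma \in \Aut(EL/L)$.

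For the forward direction of the equivalence, if $E/k$ is potentially Galois over $L$ then $|H| = |\Aut(EL/L)| = [EL:L] = [E:k]$, which combined with $|\Gal(N/k)| = [N:E][E:k]$ gives $|H|\cdot|\Gal(N/E)| = |\Gal(N/k)|$. Triviality of $H \cap \Gal(N/E)$ is then immediate: any such $\sigma$ lifts via injectivity to $\tilde\sigma \in \Aut(EL/L)$ fixing both $E\subset N$ and $L$ pointwise, hence fixing $EL$, so $\tilde\sigma = \mathrm{id}$ and $\sigma = 1$. Conversely, the Zappa--Sz\'ep condition yields $|H| = [E:k]$, hence $|\Aut(EL/L)| = [E:k]$; combined with the general inequalities $|\Aut(EL/L)| \le [EL:L] \le [E:k]$, this forces equality throughout, so $EL/L$ is Galois of degree $[E:k]$, which is exactly the condition that $E/k$ be potentially Galois over $L$. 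I do not expect any real obstacle: everything reduces to the identity $EL = NL$ together with a standard degree count, with no step going beyond the usual Galois dictionary.
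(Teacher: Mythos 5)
Your proof is correct, and much of it coincides with the paper's: the injectivity of ${\rm res}$ via $NL=EL$ (forced by $E\subset N\subset EL$) is the same observation, and your reverse implication is the paper's, namely that the equivalent conditions give $|H|=[E:k]$, hence $|\Aut(EL/L)|=[E:k]$, which (by the criterion recalled just before Lemma~\ref{la:pp_conditions}, or equivalently your chain $|\Aut(EL/L)|\le[EL:L]\le[E:k]$) says $EL/L$ is Galois of degree $[E:k]$. Where you genuinely diverge is the forward direction. You verify condition~(\ref{Gal_ZS}) of Proposition~\ref{ZS_equiv} directly inside $\Gal(N/k)$: the order count $|H|=[EL:L]=[E:k]$ gives $|H|\cdot|\Gal(N/E)|=|\Gal(N/k)|$, and $H\cap\Gal(N/E)=\{1\}$ follows by lifting such a $\sigma$ to an automorphism of $EL$ fixing $E$ and $L$, hence all of $EL$. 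The paper instead verifies condition~(\ref{pot_Gal_over_ff}): it sets $L_0=N^H$, observes $L_0\subset (EL)^{\Gal(EL/L)}=L$ so that $L_0/k$ inherits linear disjointness from $E/k$ from that of $L/k$, and concludes $EL_0=N$ by comparing $[EL_0:L_0]=[E:k]=|H|$ with $[N:L_0]=|H|$. Since Proposition~\ref{ZS_equiv} makes the two conditions equivalent, either target suffices; your route is purely group-theoretic, while the paper's field-theoretic route has the side benefit of exhibiting the distinguished subfield $N^H$ explicitly, which is what gets reused in Theorem~\ref{prop:potentiallyGoG_characterization}. I see no gap in your argument.
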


\begin{proof}
The inclusions $E\subset N \subset EL$ imply that $NL=EL$, and so the
restriction map ${\rm res}:\Aut(EL/L)\rightarrow \Gal(N/k)$ is injective.  This proves the first assertion.

Assume that $E/k$ is potentially Galois over $L$.
Then $EL/L$ is Galois and $H=r(\Gal(EL/L))$ is of order $[E:k]$. 
Set $L_0=N^H$. 
The extensions $L_0/k$ and $E/k$ are linearly disjoint since $L/k$ and $E/k$ are linearly disjoint and since $L_0=N^{H}\subset (EL)^{{\rm Gal}(EL/L)}=L$. Consequently $[EL_0: L_0] = [E:k] = |H|$.
But $EL_0 \subset N$ and $[N:L_0]=|H|$.  Therefore $E L_0= N$ and $EL_0/L_0$ is Galois (of group $H$). 
That is, condition~(\ref{pot_Gal_over_ff}) of Proposition~\ref{ZS_equiv} holds.

Conversely, assume that condition~(\ref{pot_Gal_over_ff}) of Proposition~\ref{ZS_equiv} holds.
Then the group $\Aut(EL/L)$, being isomorphic to $H =\Gal(N/N^H) =\Gal(EN^H/N^H)$, has order $[EN^H: N^H]=[E:k]$.  
By the last equivalence stated prior to Lemma~\ref{la:pp_conditions},
it follows that $E/k$ is potentially Galois over $L$. 
\end{proof}

\begin{notation} \label{rem:G(L)}
Let $E/k$ be a finite separable extension, with Galois closure $\widehat E/k$. The sets ${\mathscr L}$, ${\mathscr G}$ and the maps $\LL$, $\GG$ that are defined below depend on the extension $E/k$. For simplicity we omit the reference to $E/k$ in the notation.

Write ${\mathscr L}$ for the (possibly empty) set of all overfields $L$ of $k$ such that $E/k$ is potentially Galois over $L$, and $\mathscr G$ for the (possibly empty) set of complements of 
$\Gal(\widehat E/E)$ in $\Gal(\widehat E/k)$. Note that if $G\in \mathscr G$, then $|G|=[E:k]$.

If $L\in {\mathscr L}$, then for $N=\widehat E$, both conditions $E\subset N$ of Proposition \ref{ZS_equiv} and $N\subset EL$ of Proposition \ref{prop:potentially_characterization} are satisfied (via Lemma~\ref{la:pp_conditions}(\ref{contains_Gal_cl}) for the latter). It follows from those propositions that the group $\GG(L):= {\rm res}(\Gal(EL/L))$, with ${\rm res}:\Gal(EL/L) \rightarrow \Gal(\widehat E/k)$ the restriction map, is a 
complement of $\Gal(\widehat E/E)$ in $\Gal(\widehat E/k)$; i.e. is in the set $\mathscr G$.  We thus have a map $\GG:\mathscr L \rightarrow \mathscr G$ given by 
\[L\mapsto \GG(L)= {\rm res}(\Gal(EL/L)) \subset \Gal(\widehat E/k).\]
By definition $E/k$ is potentially Galois over $L$ with group $\GG(L)$.
\end{notation}

\begin{theorem} \label{prop:potentiallyGoG_characterization} 
Let $E/k$ be a separable field extension of degree $d$, and let $\widehat E/k$ be its Galois closure.  Let $\mathscr L_{\rm m}\subset \mathscr L$ be the subset of minimal fields $L$ in $\mathscr L$. 
\renewcommand{\theenumi}{\alph{enumi}}
\renewcommand{\labelenumi}{(\alph{enumi})}
\begin{enumerate} 
\item \label{pot Gal bijection} 
The restriction $\GG: \mathscr L_{\rm m} \rightarrow \mathscr G$ of $\GG$ to $\mathscr L_{\rm m}$ is a bijection whose inverse is the map $\LL: \mathscr G \rightarrow \mathscr L_{\rm m}$
defined by $\LL(G) = \widehat E^G$.
\item \label{min fields give pot Gal}
For each $L \in \mathscr L_{\rm m}$, we have $L\subset \widehat E$, $EL=\widehat E$ and the extension $E/k$ is potentially Galois over $L$ with group $\GG(L)=\Gal(\widehat E/L)$.
\item \label{min subextens}  
For every $L\in \mathscr L$, there is a unique $L_0 \in \mathscr L_{\rm m}$ such that $L_0 \subset L$; and $L_0=\LL(\GG(L))$.  The extension $EL/L$ is obtained from $EL_0/L_0$ by base change. Furthermore, $\GG(L) = \GG(L_0)$.
Finally if $L/k$ is Galois, so is $L_0/k$, and $\GG(L)$ is normal in $\Gal(\widehat E/k)$.
\end{enumerate}
\end{theorem}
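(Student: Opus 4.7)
The plan is to construct the candidate inverse map $\LL: \mathscr G \to \mathscr L_{\rm m}$ given by $\LL(G) = \widehat E^G$, and to derive parts (a), (b), and (c) simultaneously from one key identity: for every $L \in \mathscr L$, $\widehat E^{\GG(L)} = L \cap \widehat E$. Verifying this identity carefully is what I expect to be the only real subtlety; everything else is essentially bookkeeping once it is in hand.

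First, I would check that $\LL$ is well-defined, lands in $\mathscr L_{\rm m}$, and satisfies $\GG\circ \LL = {\rm id}_{\mathscr G}$. Given $G \in \mathscr G$, Proposition~\ref{ZS_equiv} applied with $N=\widehat E$ and $H = G$ converts the complement hypothesis (its condition~(ii)) into~(i): $E/k$ is potentially Galois over $\widehat E^G$ and $E\cdot\widehat E^G=\widehat E$. This places $\LL(G)$ in $\mathscr L$, confirms $\GG(\LL(G))=\Gal(\widehat E/\widehat E^G)=G$, and establishes assertion~(b) for $\LL(G)$. For minimality, any $L'\in\mathscr L$ with $L'\subsetneq \LL(G)$ would satisfy $\widehat E\subseteq EL'\subseteq E\widehat E^G=\widehat E$ by Lemma~\ref{la:pp_conditions}(\ref{sep}), whence $[\widehat E:L']=[EL':L']=[E:k]=|G|=[\widehat E:\widehat E^G]$, contradicting the strict inclusion.

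Next, for $L\in\mathscr L$, I would establish $\widehat E^{\GG(L)}=L\cap \widehat E$ directly: using the injective restriction map $\Aut(EL/L)\hookrightarrow \Gal(\widehat E/k)$ of Proposition~\ref{prop:potentially_characterization}, an element $x\in\widehat E$ is fixed by $\GG(L)$ iff it is fixed by every $\sigma\in\Gal(EL/L)$ iff $x\in(EL)^{\Gal(EL/L)}\cap\widehat E=L\cap\widehat E$. Setting $L_0:=\LL(\GG(L))=\widehat E^{\GG(L)}$, this shows $L_0\subseteq L$, and by the previous paragraph $L_0\in\mathscr L_{\rm m}$ with $\GG(L_0)=\GG(L)$. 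If $L$ itself lies in $\mathscr L_{\rm m}$, minimality forces $L=L_0$, completing $\LL\circ\GG={\rm id}_{\mathscr L_{\rm m}}$ and finishing (a) and (b). For the uniqueness in~(c), any $L_0'\in\mathscr L_{\rm m}$ with $L_0'\subseteq L$ satisfies $L_0'\subset\widehat E$ by~(b); since each $\sigma\in\Gal(EL/L)$ fixes $L_0'$, one obtains $\GG(L)\subseteq\Gal(\widehat E/L_0')=\GG(L_0')$, and equality of orders forces $\GG(L_0')=\GG(L)$, whence $L_0'=\LL(\GG(L_0'))=\LL(\GG(L))=L_0$.

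The remaining assertions of~(c) follow quickly. For the base-change claim, $EL=\widehat E\cdot L$ (as in the proof of Lemma~\ref{la:pp_conditions}) and $\widehat E\cap L=L_0$ combined with $\widehat E/L_0$ being Galois give that $\widehat E$ and $L$ are linearly disjoint over $L_0$, so $EL\cong\widehat E\otimes_{L_0}L\cong EL_0\otimes_{L_0}L$. If moreover $L/k$ is Galois, then $L_0=L\cap\widehat E$ is the intersection of two Galois extensions of $k$, hence Galois over $k$; and $EL=\widehat E L$ is then Galois over $k$, so $\Gal(EL/L)$ is normal in $\Gal(EL/k)$, making its image $\GG(L)$ under the surjection $\Gal(EL/k)\twoheadrightarrow\Gal(\widehat E/k)$ normal in $\Gal(\widehat E/k)$.
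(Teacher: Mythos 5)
Your proof is correct and follows essentially the same route as the paper's: both rest on Proposition~\ref{ZS_equiv} applied with $N=\widehat E$ to show that $\widehat E^G\in\mathscr L$ with $\GG(\widehat E^G)=G$ for each $G\in\mathscr G$, and on Lemma~\ref{la:pp_conditions}(\ref{sep}) to force $EL\supset\widehat E$ and hence $\widehat E^{\GG(L)}\subset L$, from which the bijection, minimality, and uniqueness statements all follow by order counts. Your sharpened identity $\LL(\GG(L))=L\cap\widehat E$ is a pleasant refinement that makes the final claim (that $L_0/k$ is Galois when $L/k$ is) immediate, as an intersection of two Galois extensions of $k$, where the paper instead argues via normality of the subgroup of $\Gal(\widehat EL/k)$ generated by $\Gal(\widehat EL/L)$ and $\Gal(\widehat EL/\widehat E)$.
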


\begin{proof} The proof proceeds in several steps.

{\it First argument}. Let $L\in \mathscr L$ and
$G=\GG(L)$. It follows from the definition of $\GG(L)$ (Notation \ref{rem:G(L)}) that
$L=(EL)^{{\rm Gal}(EL/L)} \supset \widehat E^G$. As $\widehat E L = EL$ (Lemma \ref{la:pp_conditions}(\ref{contains_Gal_cl})), we obtain that
\vskip 1mm

\noindent
(*) the extension $EL/L$ is obtained from $\widehat E/\widehat E^G$ by base change $L/\widehat E^G$. 
\vskip 1mm

\noindent
Furthermore, by definition of $\mathscr G$, for every $G\in \mathscr G$, 
$\Gal(\widehat E/k)$ is the ZS-product of $G$ and $\Gal(\widehat E/E)$. It follows from Proposition \ref{ZS_equiv} ((\ref{Gal_ZS}) $\Rightarrow$ (\ref{pot_Gal_over_ff}), with $N=\widehat E$) that 
\vskip 1mm

\noindent
(**) $E/k$ is potentially Galois over $\widehat E^G$ and $\widehat E=E \widehat E^G$; in particular $\GG(\widehat E^G)= G$.
\vskip 1mm

{\it Proof that $\LL(\mathscr G)\subset \mathscr L_{\rm m}$ and $\mathscr G =\GG(\mathscr L_{\rm m})$}.
Let $G \in \mathscr G$. Then $\widehat E^G\in \mathscr L$ (from (**) above).
Now suppose that $L\in \mathscr L$ satisfies $L\subset \widehat E^G$. We have $EL\subset \widehat E$, and as we already know that $EL\supset \widehat E$, we obtain $EL=\widehat E$. It follows that $\GG(L)=\Gal(\widehat E/L) \supset G$ and this containment is in fact an equality: $\GG(L) = G$, as the groups have the same order $d$. The first argument then applies and gives $L\supset \widehat E^G$. Hence $L = \widehat E^G$ and $\widehat E^G\in \mathscr L_{\rm m}$. As this holds for every $G \in \mathscr G$, 
we have $\LL(\mathscr G)\subset \mathscr L_{\rm m}$. Also $G = \GG(\widehat E^G)$.  So since $\widehat E^G\in \mathscr L_{\rm m}$ for every $G \in \mathscr G$, it follows that $\mathscr G \subset \GG(\mathscr L_{\rm m})$.
As $\GG(\mathscr L_{\rm m}) \subset \GG(\mathscr L) \subset \mathscr G$ (Notation \ref{rem:G(L)}), we obtain $\mathscr G =\GG(\mathscr L_{\rm m})$.

\vskip 1,5mm

{\it Proof of (\ref{min fields give pot Gal}) and of $\mathscr L_{\rm m}= \LL(\mathscr G)$}. For every $L\in \mathscr L_{\rm m}$, the first argument shows that  $L \supset \widehat E^G$ for some $G\in \mathscr G$, and so $L=\widehat E^G$ since $L$ and $\widehat E^G$ are in ${\mathscr L}_{\rm m}$. Taking into account (**) above, statement (\ref{min fields give pot Gal}) follows immediately. It also follows that  $\mathscr L_{\rm m}\subset \LL(\mathscr G)$; but $\LL(\mathscr G)\subset \mathscr L_{\rm m}$, and so $\mathscr L_{\rm m}= \LL(\mathscr G)$. 

\vskip 1,5mm

{\it Proof of (\ref{pot Gal bijection})}. We already know that the maps $\LL:\mathscr G \to \mathscr L_{\rm m}$ and $\GG:\mathscr L_{\rm m} \to \mathscr G$ are well-defined and surjective. For every $L\in \mathscr L_m$, the equality
$\GG(L)=\Gal(\widehat E/L)$ (proved in (\ref{min fields give pot Gal})) yields $\LL (\GG (L)) = L$. 
Finally we know from above that for every $G\in \mathscr G$, $\LL(G)=\widehat E^G \in \mathscr L_{\rm m}$. 
Statement (\ref{min fields give pot Gal}) then gives  $\GG(\LL(G))=\Gal(\widehat E/\LL(G))=G$, completing the proof of (\ref{pot Gal bijection}).

\vskip 1,5mm

{\it Proof of (\ref{min subextens})}. Let $L\in \mathscr L$. The fact that the field $L_0= \LL(\GG(L))$ satisfies $L_0 \in \mathscr L_{\rm m}$ and $L_0 \subset L$ was already proved. Assume that there is another $L_0^\prime\in  \mathscr L_{\rm m}$ such that $L_0^\prime \subset L$. Write $L^\prime_0=\LL(G^\prime)$ with $G^\prime \in \mathscr G$.
It follows from $L_0=\widehat E^{\GG(L)} \subset L$ and $L^\prime_0=\widehat E^{G^\prime} \subset L$ that
\[\GG(L) = {\rm res}(\Gal(EL/L) ) \subset  \Gal(\widehat E/\widehat E^{\GG(L)} ) \cap  \Gal(\widehat E/\widehat E^{G^\prime}) 
= {\GG(L)} \cap G^\prime.\]
As the groups $\GG(L)$ and $G^\prime$ have the same order, 
namely $d=[E:k]$, we have necessarily $\GG(L) = G^\prime$ and so $L_0=L_0^\prime$.
The second sentence of statement (\ref{min subextens}) corresponds to (*) in the first argument. The equality $\GG(L_0)=\GG(L)$ follows from $\GG\circ \LL ={\rm Id}_{\mathscr G}$.
Finally, assume that $L/k$ is a Galois extension. Then the field $EL=\widehat E L$ is a Galois extension of $k$; and $L_0=\LL(\GG(L))$ is the fixed field in $\widehat E L$ of the subgroup ${\gp}
\subset \Gal(\widehat EL/k)$ that is
generated by $\Gal(\widehat EL/L)$ and $\Gal(\widehat EL/\widehat E)$. As the extensions 
$\widehat E/k$ and $L/k$ are Galois, both these subgroups are normal in 
$\Gal(\widehat EL/k)$. Therefore so is ${\gp}$ and hence $L_0/k$ is Galois; equivalently, 
$\GG(L)$ is normal in $\Gal(\widehat E/k)$.
\end{proof}

\begin{remark} \label{rem:post-thm} We will sometimes use the following facts, contained in Theorem \ref{prop:potentiallyGoG_characterization}:
\renewcommand{\theenumi}{\alph{enumi}}
\renewcommand{\labelenumi}{(\alph{enumi})}
\begin{enumerate}
\item \label{corresp rk a}
An overfield $L$ of $k$ is in $\mathscr L_{\rm m}$ if and only if it is of the form $L=\LL(G)$ for some unique $G\in \mathscr G$ (Theorem \ref{prop:potentiallyGoG_characterization}(\ref{pot Gal bijection})), and then 
 $\LL(G) \subset \widehat E$, $E\LL(G)=\widehat E$ and  the extension $E/k$ is potentially Galois over $\LL(G)$ with group $\GG(\LL(G))=G$ (Theorem \ref{prop:potentiallyGoG_characterization}(\ref{min fields give pot Gal})).
\item \label{corresp rk b}
If an extension $E/k$ is potentially 
or pre-Galois, then by Theorem \ref{prop:potentiallyGoG_characterization}(\ref{min fields give pot Gal})
it is so over some extension  $L/k$ that is contained in the Galois closure $\widehat E/k$, with $EL = \widehat E$: e.g.\ any $L/k$ with $L\in {\mathscr L}_{\rm m}$. (There is in fact no other choice: if $L\in \mathscr L$ and $L\subset \widehat E$, then $L\in {\mathscr L}_{\rm m}$. Indeed, from Theorem \ref{prop:potentiallyGoG_characterization}(\ref{min subextens}), if $L\in \mathscr L$, then $L$ contains some $L_0\in {\mathscr L}_{\rm m}$, which satisfies $EL_0 = \widehat E$ (Theorem \ref{prop:potentiallyGoG_characterization}(\ref{min fields give pot Gal})); and if $L\subset \widehat E$, then $EL=\widehat E$. This, combined with 
$[EL:L]=[EL_0:L_0]$, gives $L=L_0$).
\end{enumerate}
\end{remark}

\begin{corollary} \label{prop:potentiallyGoG_characterization_cor} Let $E/k$ be a degree $d$ separable extension and $\widehat E/k$ be its Galois closure.
\renewcommand{\theenumi}{\alph{enumi}}
\renewcommand{\labelenumi}{(\alph{enumi})}
\begin{enumerate}
\item\label{pot cor SZ}
$E/k$ is potentially Galois (resp.\ pre-Galois) if and only if $\Gal(\widehat E/k)$ is the ZS-product of some 
order $d$ subgroup $G$ and of $\Gal(\widehat E/E)$ (resp.\ the semi-direct product of some order $d$ normal subgroup $G$ and $\Gal(\widehat E/E)$); and 
then $E/k$ is potentially Galois (resp.\ pre-Galois) of group $G$.

\item \label{pot cor complem}
The potential Galois groups (resp.\ the pre-Galois groups) of $E/k$  are exactly the complements (resp.\ the normal complements) of $\Gal(\widehat E/E)$ in $\Gal(\widehat E/k)$.

\item \label{pot cor pre iff}
An extension $E/k$ is pre-Galois if and only 
it is potentially Galois and one of the minimal extensions $\LL(G)/k$ ($G\in \mathscr G$)
is  Galois.
\end{enumerate}

\end{corollary}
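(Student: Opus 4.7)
My plan is to deduce all three parts of Corollary~\ref{prop:potentiallyGoG_characterization_cor} as essentially immediate reformulations of Theorem~\ref{prop:potentiallyGoG_characterization}, combined with the standard Galois correspondence applied to $\widehat E/k$. The key dictionary is: ``potentially Galois'' translates to ``$\mathscr L \ne \emptyset$'', the extension ``$L = \LL(G)$ is Galois over $k$'' translates to ``$G$ is normal in $\Gal(\widehat E/k)$'', and the ZS-product decomposition is the group-theoretic content of ``$G$ is a complement of $\Gal(\widehat E/E)$''.

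For part~(\ref{pot cor SZ}), in the potentially Galois case I would argue that $E/k$ is potentially Galois iff $\mathscr L \ne \emptyset$ iff (via the bijection $\GG:\mathscr L_{\rm m} \to \mathscr G$ of Theorem~\ref{prop:potentiallyGoG_characterization}(\ref{pot Gal bijection})) $\mathscr G \ne \emptyset$, i.e., iff $\Gal(\widehat E/E)$ admits a complement $G$ in $\Gal(\widehat E/k)$; such a $G$ automatically has order $d=[E:k]$ since the ZS-product has order $|\Gal(\widehat E/k)|$. The group statement then comes from Theorem~\ref{prop:potentiallyGoG_characterization}(\ref{min fields give pot Gal}), which realizes $E/k$ as potentially Galois over $\LL(G) = \widehat E^G$ with group $G$. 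For the pre-Galois refinement, I would invoke the last sentence of Theorem~\ref{prop:potentiallyGoG_characterization}(\ref{min subextens}): if $L/k$ is Galois then $\GG(L)$ is normal in $\Gal(\widehat E/k)$, so a complement becomes a normal complement and the ZS-product becomes a semi-direct product. Conversely, if $G$ is a normal complement then $\widehat E^G/k$ is Galois by the Galois correspondence, and Theorem~\ref{prop:potentiallyGoG_characterization}(\ref{min fields give pot Gal}) gives $E/k$ as pre-Galois over $\widehat E^G$ with group $G$.

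For part~(\ref{pot cor complem}), the set of potential Galois groups of $E/k$ is by definition $\GG(\mathscr L)$. Theorem~\ref{prop:potentiallyGoG_characterization}(\ref{min subextens}) shows $\GG(L)=\GG(L_0)$ for the unique $L_0 \in \mathscr L_{\rm m}$ contained in $L$, so $\GG(\mathscr L) = \GG(\mathscr L_{\rm m}) = \mathscr G$ by the bijection in Theorem~\ref{prop:potentiallyGoG_characterization}(\ref{pot Gal bijection}); this identifies potential Galois groups with complements. The pre-Galois half is exactly what was proved in the preceding paragraph, so the pre-Galois groups correspond bijectively with the normal complements. Part~(\ref{pot cor pre iff}) is then a direct packaging of part~(\ref{pot cor complem}): $E/k$ is pre-Galois iff some $G \in \mathscr G$ is a normal complement, which via the Galois correspondence is equivalent to $\LL(G)=\widehat E^G$ being Galois over $k$; combining with the characterization in (\ref{pot cor complem}) of potential Galois groups yields the stated ``iff''.

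There is no genuine obstacle here beyond careful bookkeeping: the substantive work is already in Theorem~\ref{prop:potentiallyGoG_characterization}. The only point requiring slight attention is the equivalence ``$G \trianglelefteq \Gal(\widehat E/k)$ iff $\widehat E^G/k$ is Galois'', but this is the standard Galois correspondence applied inside the finite Galois extension $\widehat E/k$.
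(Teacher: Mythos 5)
Your proposal is correct and follows essentially the same route as the paper: all three parts are deduced from Theorem~\ref{prop:potentiallyGoG_characterization} (the chain $\mathscr L\neq\emptyset \Leftrightarrow \mathscr L_{\rm m}\neq\emptyset \Leftrightarrow \mathscr G\neq\emptyset$ for part~(\ref{pot cor SZ}), the identification $\GG(\mathscr L)=\GG(\mathscr L_{\rm m})=\mathscr G$ for part~(\ref{pot cor complem}), and the final sentence of Theorem~\ref{prop:potentiallyGoG_characterization}(\ref{min subextens}) together with the classical Galois correspondence for the pre-Galois refinements). You merely spell out the bookkeeping that the paper leaves implicit, so there is nothing to correct.
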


\begin{proof} (\ref{pot cor SZ}) With our notation, $E/k$ is potentially Galois if and only if $\mathscr L\not= \emptyset$, which, from Theorem \ref{prop:potentiallyGoG_characterization}(\ref{min subextens}), is equivalent to $\mathscr L_{\rm m} \not=\emptyset$, which in turn is equivalent to $\mathscr G\not=\emptyset$ by  Theorem \ref{prop:potentiallyGoG_characterization}(\ref{pot Gal bijection}). This proves the ``potentially Galois" 
part of statement (\ref{pot cor SZ}). The ``pre-Galois'' part is proved similarly using the final part of 
Theorem \ref{prop:potentiallyGoG_characterization}(\ref{min subextens}).
\vskip 1mm

\noindent
(\ref{pot cor complem}) A group $G$ being a potential Galois group (resp. pre-Galois group) means that there is an extension $E/k$ that is potentially Galois (resp. pre-Galois) over $L$ and $G$ is isomorphic to $\GG(L)$. Thus
(\ref{pot cor complem}) straightforwardly follows from (\ref{pot cor SZ}).

\vskip 1mm

\noindent
(\ref{pot cor pre iff}) The direct part is straightforward from Theorem~\ref{prop:potentiallyGoG_characterization}(\ref{min subextens}). The reverse part is clear.
\end{proof}

\begin{remark} \label{rem:inside-Galois-closure}  
\renewcommand{\theenumi}{\alph{enumi}}
\renewcommand{\labelenumi}{(\alph{enumi})}
\begin{enumerate}
\item \label{inside rk a}
By Theorem \ref{prop:potentiallyGoG_characterization}(\ref{pot Gal bijection}), if $L\in \mathscr L_{\rm m}$,
 then $L=\LL(\GG(L))$. If $E/k$ is pre-Galois over $L$ (of group $\GG(L)$), the action of $\Gal(L/k)$ on $\GG(L)$ is faithful.  To see this, note that $EL/k$ is Galois (by Lemma~\ref{la:pp_conditions}(\ref{contains_Gal_cl})),
and $\Gal(EL/k)$ is a semi-direct product of the normal subgroup $\GG(L)$ with 
the quotient group $\Gal(L/k)=\Gal(EL/E)$.  If the action of $\Gal(L/k)$ on $\GG(L)$ were not faithful,
we could take the invariant subfield in $L$ of the kernel of the action, and that would be a smaller 
(Galois) extension of $k$ whose pullback makes $E/k$ Galois.

\item \label{inside rk b}
By Theorem \ref{prop:potentiallyGoG_characterization}(\ref{min subextens}), if $L\in \mathscr L$ and $L/k$ is Galois (i.e., $E/k$ pre-Galois over $L$), then the same is true for the corresponding minimal field $L_0=\LL(\GG(L))\in \mathscr L_{\rm m}$. We note however that ``Galois'' cannot be weakened to ``pre-Galois'' in this statement; viz.,  it may happen that $L/k$ is pre-Galois but $L_0/k$ is not. We provide an example in Section~\ref{ssec:lifting_problems}  (see Example~\ref{example:L_0/k_not_pre-Galois}).
\end{enumerate}
\end{remark}

\begin{corollary} \label{corollary:potential_Galois_over_Galois_group}
A finite group $G$ is a pre-Galois group over $k$ if and only if there exists a group action $A\rightarrow {\rm Aut}(G)$ such that the semi-direct product $G\rtimes A$ is a Galois group over $k$.
\end{corollary}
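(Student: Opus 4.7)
The plan is to deduce both directions from the structural characterization in Corollary~\ref{prop:potentiallyGoG_characterization_cor}(\ref{pot cor complem}): a finite group $G$ is a pre-Galois group of a separable extension $E/k$ if and only if $G$ occurs as a normal complement of $\Gal(\widehat E/E)$ in $\Gal(\widehat E/k)$.

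For the forward direction, suppose $G$ is a pre-Galois group over $k$, realized by $E/k$ with Galois closure $\widehat E/k$. By the characterization above, $\Gal(\widehat E/k) = G \rtimes H$, where $H = \Gal(\widehat E/E)$ acts on $G$ by conjugation inside $\Gal(\widehat E/k)$. The field $\widehat E$ is then Galois over $k$ with group $G \rtimes H$, so taking $A = H$ with this conjugation action gives the desired realization.

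For the converse, let $A \to \Aut(G)$ be an action and let $N/k$ be a Galois extension with $\Gal(N/k) = G \rtimes A$. Embed $A$ in $G \rtimes A$ as the canonical complement and set $E = N^A$, so $[E:k] = |G|$. Let $A_N = \bigcap_{g \in G \rtimes A} g A g^{-1}$ be the normal core of $A$; this is the largest normal subgroup of $G \rtimes A$ contained in $A$, and hence $\widehat E = N^{A_N}$ is the Galois closure of $E/k$. Then $\Gal(\widehat E/k) = (G \rtimes A)/A_N$ and $\Gal(\widehat E/E) = A/A_N$. I claim the image $\bar G$ of $G$ in $(G \rtimes A)/A_N$ is a normal complement of $A/A_N$. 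Normality is immediate since $G$ is normal in $G \rtimes A$. Since $A_N \subset A$ and $G \cap A = \{1\}$, the map $G \to \bar G$ is injective, so $|\bar G| = |G|$; and the order equality $|\bar G| \cdot |A/A_N| = |G \rtimes A|/|A_N| = |\Gal(\widehat E/k)|$ holds. Finally, if $g \in G$ and $a \in A$ have the same image modulo $A_N$, then $g a^{-1} \in A_N \subset A$, so $g \in A \cap G = \{1\}$; hence $\bar G \cap (A/A_N) = \{1\}$. By Corollary~\ref{prop:potentiallyGoG_characterization_cor}(\ref{pot cor complem}), $G \cong \bar G$ is a pre-Galois group of $E/k$, so a pre-Galois group over $k$.

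The only subtlety I expect is the reverse direction, precisely because the given action $A \to \Aut(G)$ need not be faithful and $A$ need not have trivial normal core in $G \rtimes A$; this is why one must work inside the Galois closure $\widehat E$ rather than $N$ itself. The verification that passing to the quotient by $A_N$ preserves the ``normal complement'' property is what makes the argument go through cleanly.
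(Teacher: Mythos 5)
Your proof is correct. The forward direction is exactly the paper's argument: Corollary~\ref{prop:potentiallyGoG_characterization_cor}(\ref{pot cor complem}) exhibits $G$ as a normal complement of $\Gal(\widehat E/E)$, so $\Gal(\widehat E/k)=G\rtimes A$ with $A=\Gal(\widehat E/E)$ acting by conjugation.

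For the converse your route differs from the paper's in an instructive way. You pass to the Galois closure $\widehat E=N^{A_N}$ (with $A_N$ the normal core of $A$) and verify by hand that the image $\bar G$ of $G$ in $\Gal(\widehat E/k)=(G\rtimes A)/A_N$ is a normal complement of $A/A_N$; all the individual checks (injectivity of $G\to\bar G$ via $G\cap A_N=\{1\}$, the order count, trivial intersection, normality) are correct, and the identification $\widehat E=N^{A_N}$ is the right one. The paper avoids the normal core entirely: Proposition~\ref{ZS_equiv} is deliberately stated for an \emph{arbitrary} Galois extension $N/k$ containing $E$, not just the Galois closure, so one applies condition~(\ref{Gal_ZS}) with $H=G$ directly in $N=$ the given $G\rtimes A$-extension, concludes that $E=N^A$ is potentially Galois over $L=N^G$ with group $G$, and then normality of $G$ in $\Gal(N/k)$ makes $L/k$ Galois, hence $E/k$ pre-Galois over $L$. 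The ``subtlety'' you flag (non-faithful action, nontrivial normal core) is thus genuine from your vantage point but is sidestepped in the paper by never insisting on working in the Galois closure; your argument is what one gets if one insists on invoking the normal-complement characterization of Corollary~\ref{prop:potentiallyGoG_characterization_cor}(\ref{pot cor complem}) instead, and it is a correct, if slightly longer, alternative.
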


\begin{proof} ($\Rightarrow$) 
If $G$ is a pre-Galois group over $k$, then $G$ is a normal complement of $\Gal(\widehat E/E)$ in 
$\Gal(\widehat E/k)$, by Corollary~\ref{prop:potentiallyGoG_characterization_cor}(\ref{pot cor complem}).  Thus, with $A:= \Gal(\widehat E/E)$, we have $\Gal(\widehat E/k) = G\rtimes A$, where $A$ acts on $G$ by conjugation in 
$\Gal(\widehat E/k)$.

\smallskip

\noindent
($\Leftarrow$) Assume $G\rtimes A$ is the group of some Galois extension $N/k$. For $E=N^A$, the extension $E/k$ satisfies condition~(\ref{Gal_ZS}) from Proposition~\ref{ZS_equiv} with $H=G$. Therefore we have condition~(\ref{pot_Gal_over_ff}) of that result; in particular $E/k$ is  potentially Galois over $L=N^{G}$. As $G$ is normal in $\Gal(N/k)$, the extension $L/k$ is Galois and $E/k$ is  pre-Galois over $L$, of group $G$.
\end{proof}

Joachim K\"onig observed the following consequences of Corollary~\ref{corollary:potential_Galois_over_Galois_group}.

\begin{corollary} \label{preGal Gal special gps}
Let $G$ be a finite group such that $\Out(G)$ is trivial, and such that the exact sequence 
$1 \to Z(G) \to G \to \Inn(G) \to 1$ is split.  If $G$ is a pre-Galois group over a field $k$, then $G$ is a Galois group over $k$. 
\end{corollary}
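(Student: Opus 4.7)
My plan is to use Corollary~\ref{corollary:potential_Galois_over_Galois_group} and then exhibit inside the resulting semi-direct product a ``twisted diagonal'' subgroup that makes the semi-direct product a direct product with factor $G$.

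\textbf{First step.} By Corollary~\ref{corollary:potential_Galois_over_Galois_group}, there is a group action $\alpha: A \to \Aut(G)$ such that $\Gamma := G \rtimes_\alpha A$ is the Galois group of some Galois extension $N/k$. The task is to show that, under the hypotheses on $G$, the group $\Gamma$ admits a normal subgroup $H$ with $\Gamma/H \cong G$; the fixed field $N^H$ will then be a Galois extension of $k$ with Galois group $G$.

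\textbf{Second step.} Since $\Out(G)$ is trivial, the action $\alpha$ factors through $\Inn(G)$. The hypothesis that $1 \to Z(G) \to G \to \Inn(G) \to 1$ is split provides a section $s: \Inn(G) \to G$; composing, we obtain a homomorphism
\[
\Phi := s \circ \alpha : A \longrightarrow G,
\]
with the property that for every $a \in A$, the automorphism $\alpha_a \in \Aut(G)$ coincides with conjugation by $\Phi(a)$ in $G$.

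\textbf{Third step.} Inside $\Gamma = G \rtimes A$ I set
\[
H := \{\,\Phi(a)^{-1} \cdot a \ :\ a \in A\,\}.
\]
A direct computation using the semi-direct product law and the defining property of $\Phi$ shows that $(\Phi(a_1)^{-1} a_1)(\Phi(a_2)^{-1} a_2) = \Phi(a_1 a_2)^{-1} (a_1 a_2)$, so $H$ is a subgroup isomorphic to $A$. A second short computation shows that for every $g \in G$ and every $h = \Phi(a)^{-1} a \in H$,
\[
h g h^{-1} = \Phi(a)^{-1} \alpha_a(g) \Phi(a) = \Phi(a)^{-1} \Phi(a) g \Phi(a)^{-1} \Phi(a) = g,
\]
so $H$ centralizes $G$. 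Combined with $G \cap H = \{1\}$ (obvious by projecting to $A$) and $G \cdot H = \Gamma$ (since already $G \cdot A = \Gamma$ and $A \subset G \cdot H$), we conclude that $\Gamma = G \times H$ as an internal direct product. In particular $H$ is normal in $\Gamma$ and $\Gamma/H \cong G$.

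\textbf{Conclusion and obstacle.} Taking $N^H/k$ produces the desired Galois extension with group $G$. The only non-trivial point is the construction of the ``twisted diagonal'' $H$; this step genuinely uses both hypotheses: triviality of $\Out(G)$ is what guarantees that $\alpha$ takes values in $\Inn(G)$ (so that the section $s$ can be applied), and the splitting of $1 \to Z(G) \to G \to \Inn(G) \to 1$ is what upgrades the map $A \to \Inn(G)$ to an actual homomorphism $A \to G$, which is indispensable for $H$ to be a subgroup (not merely a set of coset representatives) that centralizes $G$.
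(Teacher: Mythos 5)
Your proof is correct and follows essentially the same route as the paper: both apply Corollary~\ref{corollary:potential_Galois_over_Galois_group} and then use the section $\Inn(G)\to G$ to build a complement to $G$ that centralizes it, showing $G\rtimes A\cong G\times A$; indeed your subgroup $H=\{\Phi(a)^{-1}a\}$ is exactly the paper's $A^*=\{(\sigma(a),a^{-1})\}$ after reparametrizing $a\mapsto a^{-1}$. The computations you outline all check out.
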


\begin{proof}
By Corollary~\ref{corollary:potential_Galois_over_Galois_group}, it suffices to show that every semi-direct product $G \rtimes A$ is isomorphic to $G \times A$, since then $G$ is a quotient of the Galois group $G \times A$ and hence itself a Galois group.   

Given a semi-direct product $G \rtimes A$, let $\alpha:A \to \Aut(G)$ be the associated action.
Since $\Out(G)$ is trivial, $\Aut(G)=\Inn(G)$.  Composing the resulting map $A \to \Inn(G)$ with the given section
$\Inn(G) \to G$ of $G \to \Inn(G)$, we obtain a homomorphism $\sigma:A \to G$ such that $\alpha(a)(g)=
\sigma(a)g\sigma(a)^{-1}$ for $a \in A$ and $g \in G$.

Let $A^* = \{(\sigma(a),a^{-1}) \in G \rtimes A\,|\,a \in A\}$.  One checks directly that the map $A \to A^*$ given by $a \mapsto (\sigma(a),a^{-1})$ is an isomorphism; that the subgroups $G \times 1$ and $A^*$ of $G \rtimes A$ commute; that they intersect trivially; and that they generate $G \rtimes A$.  Hence 
$G \rtimes A \cong (G \times 1) \times A^* \cong G \times A$.
\end{proof}

Recall that a group $G$ is said to be {\it complete} if both $Z(G)$ and ${\rm Out}(G)$ are trivial. Symmetric groups $S_n$ with $n\notin \{2,6\}$ and all automorphism groups of non-abelian simple groups are examples of complete groups
\cite[Section 13.5.10]{Ro96}.

\begin{corollary} \label{preIGP=IGP} Let $k$ be an arbitrary field. The following statements are equivalent:
\renewcommand{\theenumi}{\roman{enumi}}
\renewcommand{\labelenumi}{(\roman{enumi})}
\begin{enumerate}
\item \label{IGP-k}
Every finite group is a Galois group over $k$.
\item \label{preIGP-k}
Every finite group is a pre-Galois group over $k$.
\item \label{normIGP-k}
Every finite group is a normal subgroup of a Galois group over $k$.
\item \label{complete normIGP-k}
Every complete finite group is a normal subgroup of a Galois group over $k$.
\end{enumerate}
\end{corollary}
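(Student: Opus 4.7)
The plan is to establish the cycle (i)~$\Rightarrow$~(ii)~$\Rightarrow$~(iii)~$\Rightarrow$~(iv)~$\Rightarrow$~(i). The first three implications are essentially immediate: (i)~$\Rightarrow$~(ii) holds because a Galois extension of $k$ is trivially pre-Galois (take $L=k$); (ii)~$\Rightarrow$~(iii) follows from Corollary~\ref{corollary:potential_Galois_over_Galois_group}, which realizes any pre-Galois $G$ as a normal subgroup of the Galois group $G\rtimes A$; and (iii)~$\Rightarrow$~(iv) is trivial, since complete finite groups form a subclass of all finite groups.

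For the substantive implication (iv)~$\Rightarrow$~(i), I plan to proceed in two steps. Step one would show that (iv) already forces every complete finite group $H$ to be a Galois group over $k$. Indeed, given a Galois extension $E/k$ with $H\trianglelefteq\Gamma:=\Gal(E/k)$, conjugation by each $\gamma\in\Gamma$ induces an automorphism of $H$ that is inner (since $\Aut(H)=\Inn(H)$), so $\gamma h^{-1}\in C_\Gamma(H)$ for some $h\in H$. Combined with $H\cap C_\Gamma(H)=Z(H)=1$, this yields the direct product decomposition $\Gamma=H\times C_\Gamma(H)$, and hence $H\cong\Gal(E^{C_\Gamma(H)}/k)$ is itself a Galois group over $k$.

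Step two would invoke the group-theoretic fact that every finite group $G$ is a quotient of some complete finite group $H$. Combined with step one, this realizes $G$ as a quotient of the Galois group $H=\Gal(E/k)$, so that the subfield of $E$ fixed by the kernel of $H\twoheadrightarrow G$ realizes $G$ as a Galois group over $k$.

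The hard part will be the group-theoretic input in step two. For cyclic groups $\Zz/n$, one has explicit constructions from almost-simple groups with cyclic outer automorphism group of order $n$, and general finite abelian groups can be handled by taking direct products of pairwise non-isomorphic such complete groups. For an arbitrary finite group, the fact should follow by an inductive argument combining the finite automorphism tower theorem with suitable semidirect-product constructions, but it appears to be the main substance of the proof; isolating this claim as a separate lemma seems like the cleanest approach.
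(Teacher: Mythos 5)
Your cycle (i)$\Rightarrow$(ii)$\Rightarrow$(iii)$\Rightarrow$(iv)$\Rightarrow$(i) and the first three implications coincide with the paper's proof. For (iv)$\Rightarrow$(i) your overall strategy is also the same --- pass from an arbitrary finite group $G$ to a complete group $\widetilde G$ mapping onto it, realize $\widetilde G$ as a Galois group, and descend to $G$ --- but your step one is executed differently and, I would say, more directly. The paper invokes the splitting of $1\to\widetilde G\to\Gamma\to\Gamma/\widetilde G\to 1$ for complete normal subgroups \cite[Section 13.5.8]{Ro96} to write $\Gamma\cong\widetilde G\rtimes A$, then routes through Corollary~\ref{corollary:potential_Galois_over_Galois_group} (so $\widetilde G$ is pre-Galois) and Corollary~\ref{preGal Gal special gps} (so $\widetilde G$ is Galois). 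Your argument that $\Gamma=H\times C_\Gamma(H)$ --- conjugation induces an inner automorphism of $H$ since $\Out(H)=1$, and $H\cap C_\Gamma(H)=Z(H)=1$ --- is correct ($C_\Gamma(H)$ is normal because $H$ is), and it reaches the same conclusion in one step without detouring through the pre-Galois machinery; in effect you are re-proving the special case of Corollary~\ref{preGal Gal special gps} that is actually needed. Either route is fine.

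The one point needing repair is your step two. The statement that every finite group is a quotient of a finite complete group is exactly \cite[Theorem 1]{HR80} (Hartley--Robinson), which the paper simply cites; you should do the same rather than attempt to re-derive it. Your proposed derivation via the automorphism tower theorem does not work as sketched: iterating $G\mapsto\Aut(G)$ (for centerless $G$) terminates in a complete group containing $G$ as a \emph{subnormal subgroup}, which is the wrong direction --- here you need $G$ as a \emph{quotient}, and that is a genuinely different and harder construction. As long as you replace your heuristic sketch by the citation, the proof is complete.
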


\begin{proof}
The implication (\ref{IGP-k})$\Rightarrow$(\ref{preIGP-k}) is obvious; (\ref{preIGP-k})$\Rightarrow$(\ref{normIGP-k}) follows from Corollary \ref{corollary:potential_Galois_over_Galois_group}; and (\ref{normIGP-k})$\Rightarrow$(\ref{complete normIGP-k}) is obvious. 
We are left with proving (\ref{complete normIGP-k})$\Rightarrow$(\ref{IGP-k}).

Let $G$ be a finite group. By \cite[Theorem 1]{HR80}, $G$ is a quotient of some
complete group $\widetilde G$. By (\ref{complete normIGP-k}), $\widetilde G$ is a normal subgroup of some group $\Gamma$
that is a Galois
group over $k$. Since $\widetilde G$ is complete, the exact sequence 
\[1 \rightarrow \widetilde G \rightarrow \Gamma \rightarrow \Gamma/\widetilde G \rightarrow 1\]
splits \cite[Section 13.5.8]{Ro96}
and $\Gamma$ is isomorphic to a semidirect product $\widetilde G \rtimes A$ with $A = \Gamma/\widetilde G$. From  
Corollary \ref{corollary:potential_Galois_over_Galois_group}, $\widetilde G$ is a pre-Galois group over $k$. 
But since $\widetilde G$ is complete, this implies that $\widetilde G$ is a Galois group over $k$ 
(Corollary \ref{preGal Gal special gps}). It follows that $G$ itself is a Galois group over $k$.     
\end{proof}

\subsection{Examples and counterexamples} \label{subsec:ex} This section presents in particular our answers to 
Question \ref{always_pp}(\ref{always_pot})-(\ref{pot_pre}) about the connections between the notions, and to Question 
\ref{inv_ppGal}(\ref{unique_gp}) about the uniqueness of the pre-Galois group.
\vskip 1mm

Concerning Pre-IGP, a pre-Galois group $G$ over a field $k$ must satisfy these 
conditions:

\noindent
- $G$ is a potential Galois group over $k$,

\noindent
- $G$ is a Galois group $\Gal(N/L)$ with $L/k$ a Galois extension.
\vskip 1mm

As parts (\ref{hilb pot Gal}) and (\ref{hilb Gal gps over Gal extn}) of Proposition~\ref{hilb flds pot Gal cond}  below show, both of these conditions hold for {\em all} finite groups $G$, provided that $k$ is a global field, or more generally any Hilbertian field. 

We adhere to the definition of a Hilbertian field given in \cite[Section~12.1]{FrJa}.
Given integers $r,m\geq 1$ and polynomials $f_1(T_1,\ldots,T_r,Y),\ldots, f_m(T_1,\ldots,T_r,Y)$ that are irreducible in $k(T_1,\ldots,T_r)[Y]$ and separable in $Y$, the set of all  $(t_1,\ldots,t_r) \in k^r$ such that  $f_i(t_1,\ldots,t_r,Y)$ is irreducible in $k[Y]$, $i=1,\ldots,r$, is called a {\it separable Hilbert subset} of $k^r$, and the field $k$ is said to be {\it Hilbertian} if for every $r\geq 1$, every separable Hilbert subset of $k^r$ is Zariski-dense in $k^r$. 
Classical Hilbertian fields include the field $\Qq$, the rational function fields $\Ff_q(u)$ (with $u$ some indeterminate) and all of their finitely generated extensions \cite[Theorem 13.4.2]{FrJa}.

Recall that $S_d$ is a Galois group over every Hilbertian field.  This follows from the fact that $S_d$ is a Galois group over a purely transcendental extension of any field $k$ (viz., the Galois group of $k(x_1,\dots,x_d)/k(\sigma_1,\dots,\sigma_d)$, where $\sigma_i$ is the $i$-th elementary symmetric polynomial in $x_1,\dots,x_d$).

\begin{proposition} \label{hilb flds pot Gal cond}
Let $k$ be a Hilbertian field and let $G$ be any finite group.
\renewcommand{\theenumi}{\alph{enumi}}
\renewcommand{\labelenumi}{(\alph{enumi})}
\begin{enumerate}
\item \label{hilb pot Gal}
Then $G$ is a potential Galois group over $k$, and, every extension $E/k$ of degree $d=|G|$ and with Galois closure of group $S_d$ is potentially Galois of group $G$.
\item \label{potentially but no pre}
If $d\geq 5$, there is a potentially Galois extension of $k$ of degree $d$, which 
is not pre-Galois.
\item \label{hilb Gal gps over Gal extn}
There is a finite Galois extension $L/k$ such that $G$ 
is the Galois group of some finite Galois field extension $N/L$.
\end{enumerate}
\end{proposition}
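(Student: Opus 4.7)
For (a), the plan is to combine the observation recalled in the text (that $S_d$ is a Galois group over every Hilbertian field, via the splitting field of the generic polynomial) with the regular representation. Given $G$ of order $d$, I would first use Hilbertianity: by specializing an appropriate $S_d$-generic polynomial so that it remains irreducible with splitting field of Galois group $S_d$, produce a separable extension $E/k$ of degree $d$ whose Galois closure $\widehat E/k$ has group $S_d$, with $\Gal(\widehat E/E) = S_{d-1}$ (the stabilizer of a point). Then embed $G \hookrightarrow S_d$ via the regular representation; because $G$ acts freely and transitively on $\{1,\dots,d\}$, it meets $S_{d-1}$ trivially and has order $d$, so it is a Zappa--Sz\'ep complement of $S_{d-1}$ in $S_d$. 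Corollary~\ref{prop:potentiallyGoG_characterization_cor}(\ref{pot cor complem}) then yields that $E/k$ is potentially Galois with group $G$.

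For (b), take the same $E/k$ from (a) with $d = |G| \ge 5$. The pre-Galois property, by Corollary~\ref{prop:potentiallyGoG_characterization_cor}(\ref{pot cor complem}), would require a \emph{normal} complement of $S_{d-1}$ in $S_d$ of order $d$. But for $d\ge 5$ the only nontrivial proper normal subgroup of $S_d$ is $A_d$, which has order $d!/2 \neq d$. Hence no such normal subgroup exists, and $E/k$ is an explicit potentially Galois but not pre-Galois extension of degree $d$.

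For (c), the plan is to bypass the obstruction in (b) by realizing $G$ directly as a Galois group over a suitable Galois base. I would invoke the general realization result that, over $\overline k(T)$, every finite group $G$ is a Galois group of some regular extension (Riemann existence in characteristic $0$; patching in arbitrary characteristic). By a standard field-of-definition/descent argument, this extension is defined over $L_0(T)$ for some finite extension $L_0/k$, and may be arranged to be an $L_0$-regular $G$-Galois extension $F/L_0(T)$; after enlarging, we may further assume $L_0/k$ is Galois. Since $L_0$ is a finite extension of the Hilbertian field $k$, it is itself Hilbertian. Applying Hilbert irreducibility to the minimal polynomial defining $F/L_0(T)$ produces $t \in L_0$ such that the specialization $N := F_{T=t}$ is a field Galois over $L_0$ with group $G$. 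Setting $L := L_0$ gives the desired $L/k$ finite Galois and $N/L$ finite Galois of group $G$.

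The main obstacle is part~(c): parts~(a) and~(b) are purely group-theoretic once the generic $S_d$-extension is in hand, whereas (c) requires invoking external realization results over $\overline k(T)$ together with descent and specialization. A lighter proof, if one wishes to avoid Riemann existence/patching, would require a direct construction of a finite Galois extension of $k$ admitting a normal subgroup of Galois group surjecting onto $G$, which is not visible using only the classical tool that $S_n$ is a Galois group over $k$.
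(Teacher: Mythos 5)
Your proposal is correct and follows essentially the same route as the paper's own proof: part~(a) via an $S_d$-extension of the Hilbertian field $k$, the fixed field of a point stabilizer $S_{d-1}$, and the regular representation of $G$ as a complement; part~(b) via the observation that any order-$d$ complement of $S_{d-1}$ in $S_d$ fails to be normal for $d\ge 5$; and part~(c) via RIGP over the separable closure, descent to a finite Galois extension of $k$, and Hilbert specialization. There are no substantive differences.
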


\begin{proof}
Let $d\geq 1$ be an integer and $N/k$ a Galois extension of group $S_d$; this exists by the paragraph before the proposition.  Let $M\subset S_d$ the subgroup fixing one letter; thus $M$ has index $d$ and is isomorphic to $S_{d-1}$.  

(\ref{hilb pot Gal}) Assume $N/k$ is the Galois closure $\widehat E/k$ of a given degree $d$ extension $E/k$. Then $E$ is the fixed field of  $M$ in $N$ (for some letter). Let $G$ be any group of order $d$. Embed it in $S_d$ via the regular representation. The group $G$ 
is then a complement of $M$ in $S_d$. By Corollary~\ref{prop:potentiallyGoG_characterization_cor}(\ref{pot cor SZ}), the extension 
$E/k$ is potentially Galois of group $G$.

(\ref{potentially but no pre}) Assume $d\geq 5$. Let $E$ be the fixed field of $M$ in the extension $N/k$.  Any complement of $M$ in $S_d$ has order $d$; but no such 
subgroup is normal in $S_d$.  Hence $E/k$ is not pre-Galois, by Corollary~\ref{prop:potentiallyGoG_characterization_cor}(\ref{pot cor complem}).  But the group $M$ has a (non-normal) complement in $S_d$, e.g. the group generated by a $d$-cycle in $S_d$; and so $E/k$ is potentially Galois, again by Corollary~\ref{prop:potentiallyGoG_characterization_cor}(\ref{pot cor complem}).

(\ref{hilb Gal gps over Gal extn}) With $k^{\rm sep}$ the separable closure of $k$, the group $G$ is a Galois group over $k^{\rm sep}(T)$, by the RIGP over separably closed fields (a special case of RIGP over large fields).
Hence there is a finite separable extension $L/k$ and a Galois field extension $F/L(T)$ with group $G$, such that $L$ is algebraically closed in $F$.  After replacing $L$ by its Galois closure over $k$, we may assume that $L/k$ is Galois.  Since $k$ is Hilbertian, it follows that some specialization $E/L$ of $F/L(T)$ is a Galois field extension of $L$ of group $G$.  
\end{proof}

\begin{remark} 
As the proof of Proposition~\ref{hilb flds pot Gal cond} shows, parts~(\ref{hilb pot Gal}) and~(\ref{potentially but no pre}) hold more generally for any field $k$ over which $S_d$ is a Galois group, even if $k$ is not assumed Hilbertian.
\end{remark}

Proposition \ref{hilb flds pot Gal cond}(\ref{hilb pot Gal}) solves the ``potential inverse Galois problem'' over a Hilbertian field,
and, with Proposition \ref{hilb flds pot Gal cond}(\ref{hilb Gal gps over Gal extn}), provides evidence for an affirmative answer to Pre-IGP in that situation. Note that in the case of $k = \Qq$, part~(\ref{hilb Gal gps over Gal extn})
was shown at \cite[Proposition~1.4]{Ha2}.

Proposition~\ref{hilb flds pot Gal cond}(\ref{potentially but no pre}) shows that Question~\ref{always_pp}(\ref{pot_pre}) has a 
negative answer.

Concerning Question~\ref{always_pp}(\ref{always_pot}), every separable extension of degree $2$ is Galois and so pre-Galois.  Separable extensions of degree $3$ either are Galois or have a Galois closure of group $S_3$, so they are  pre-Galois too (Proposition \ref{hilb flds pot Gal cond}(\ref{hilb pot Gal})).  
Moreover, every separable extension of degree $4$ is pre-Galois, by 
\cite[Theorem~4.6]{greither-pareigis} (where it is shown that degree $4$ separable extensions are ``almost classically Galois''; see Section~\ref{subsec: Hopf} below).
Nevertheless, extensions exist that are not potentially  Galois, and so the answer to Question~\ref{always_pp}(\ref{always_pot}) is in general negative, even for Hilbertian fields $k$:

\begin{proposition} \label{prop:simple_2d}
Let $d\geq 2$ be an integer such that some simple group of order $2d$ is a Galois group over $k$. Then there is a degree $d$ separable field extension $E/k$ that is not potentially Galois.
\end{proposition}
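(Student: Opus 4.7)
My plan is to realize a counterexample explicitly inside the given simple Galois extension, and use the characterization of potentially Galois extensions via complements (Corollary~\ref{prop:potentiallyGoG_characterization_cor}(\ref{pot cor complem})) to rule out being potentially Galois.

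First, let $S$ be a simple group of order $2d$ that is realized as $\Gal(N/k)$ for some Galois extension $N/k$. Since $|S|=2d$ is even, Cauchy's theorem furnishes an element of order $2$ in $S$, and hence a subgroup $M \subset S$ with $|M|=2$. Define $E := N^M$; this is a separable field extension of $k$ of degree $[S:M]=d$.

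Next I verify that the Galois closure of $E/k$ is exactly $N$, so that $\Gal(\widehat E/k) = S$ and $\Gal(\widehat E/E) = M$. The Galois closure corresponds to the normal core $\bigcap_{s\in S} sMs^{-1}$ of $M$ in $S$; but this core is a normal subgroup of $S$ contained in the proper subgroup $M$, so by simplicity of $S$ it must be trivial. Hence $\widehat E = N$.

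Now suppose, for contradiction, that $E/k$ is potentially Galois. By Corollary~\ref{prop:potentiallyGoG_characterization_cor}(\ref{pot cor complem}), the subgroup $\Gal(\widehat E/E) = M$ admits a complement $G$ in $\Gal(\widehat E/k) = S$. Then $|G| = |S|/|M| = d$, so $G$ has index $2$ in $S$. Any subgroup of index $2$ is normal, so $G$ is a proper nontrivial normal subgroup of $S$, contradicting the simplicity of $S$.

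There is no real obstacle here; the argument is a direct application of Corollary~\ref{prop:potentiallyGoG_characterization_cor} combined with the observation that a subgroup of index~$2$ is automatically normal. The only point requiring mild care is the identification of the Galois closure via the trivial normal core, but this follows at once from simplicity.
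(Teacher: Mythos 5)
Your argument is correct and follows essentially the same route as the paper: take the fixed field $E=N^M$ of an order-$2$ subgroup $M$ of the simple Galois group, note that simplicity forces the Galois closure of $E/k$ to be all of $N$, and conclude via Corollary~\ref{prop:potentiallyGoG_characterization_cor}(\ref{pot cor complem}) that no complement of $M$ can exist since it would have index $2$ and hence be normal. The only difference is that you spell out the normal-core verification that the paper leaves implicit.
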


\begin{proof} Let $N/k$ be a Galois extension of degree $2d$ whose Galois group ${\gp}$ is simple, and let $\sigma \in {\gp}$ be an element of order $2$. As ${\gp}$ is simple, the extension $E=N^\sigma$ of $k$ is not Galois and its Galois closure is $N/k$. The group $\Gal(N/E)$ has no complement in ${\gp}$ (for it would be of index $2$). From Corollary~\ref{prop:potentiallyGoG_characterization_cor}(\ref{pot cor complem}), 
$E/k$ is not potentially Galois. 
\end{proof}

So for example, over any Hilbertian field $k$, there is a separable extension of degree $30$ that is not potentially Galois over $k$, because $A_5$ is a Galois group over $k$ (every alternating group $A_n$ is classically known to be a regular 
Galois group over $\Qq$ (see \cite[Section 4.5]{Ser92}), and so a Galois group over any Hilbertian field of characteristic $0$;
see \cite{brink} for the positive characteristic case).

\begin{remark} More generally, let $E/k$ be a non-Galois degree $d$ field extension, with Galois closure $\widehat E/k$.  Suppose that no order $d$ subgroup of $\Gal(\widehat E/k)$ has a complement in $\Gal(\widehat E/k)$.  Then $E/k$ is not potentially Galois over $k$. 
In particular, let $\Gamma$ be a group with a non-trivial subgroup $M$, and assume that $M$ has no complement in $\Gamma$ and that no nontrivial subgroup of $M$ is normal in ${\gp}$.  If ${\gp}$ is the Galois group of some extension $N/k$, the extension $N^M/k$ is not potentially Galois.  For example, we may take $\Gamma=A_4$, with $M$ of order $2$.
\end{remark}

Question~\ref{inv_ppGal}(\ref{unique_gp}) also has a negative answer, since a subgroup $U$ of a Galois group $\Gamma = \Gal(E/k)$ can have more than one normal complement up to isomorphism:

\begin{example} \label{ex:two-normal-complements}
This example was provided to us by R.~Guralnick. Let $\Gamma$ be the dihedral group of order $8$, with generators $a$ and $b$ of orders $4$ and $2$.  
Let $U$ be the $2$-cyclic subgroup generated by $b$. 
Let $V$ be the cyclic subgroup generated by $a$, and let $V^\prime$ be the Klein four subgroup generated by $a^2$ and $ab$.
The groups $V$ and $V^\prime$ are each complements to $U$ and are normal in $\Gamma$, and $V$ is not isomorphic to $V^\prime$.
\end{example}

\noindent
Since every finite group $\Gamma$ is a Galois group over some number field $k$, one can therefore obtain examples of pre-Galois extensions of $k$ with more than one pre-Galois group.  

However, Proposition~\ref{prop:simple} below does provide a uniqueness assertion in a special case.  First we need a group-theoretic lemma.

\begin{lemma} \label{lem:unique-complement}
Suppose $G$, $G^\prime$ are each normal complements of a 
subgroup $U$ in a group $\Gamma$.
\begin{enumerate}
\renewcommand{\theenumi}{\alph{enumi}}
\renewcommand{\labelenumi}{(\alph{enumi})}
\item \label{la: unique elt}
For every $g\in G$, there is a unique element $\gamma \in U$ such that $g \gamma \in G^\prime$.
\item \label{la: commutes}
Every element of $G/(G\cap G^\prime)$ commutes with every element of $G^\prime/(G\cap G^\prime)$ {\rm (}inside the group $\Gamma/(G\cap G^\prime)${\rm )},
\item \label{la: bij}
The map $\phi:G\rightarrow G^\prime$ that sends $g$ to $g\gamma$ is a bijection which satisfies: 
\begin{enumerate}
[label=(\roman*)]
\renewcommand{\theenumi}{\roman{enumi}}

\item \label{la: cocycle}
$\phi(g_1 g_2) = \phi(g_2)^{g_1} \phi(g_1)$ ($g_1,g_2\in G)$,
\item \label{la: id on int}
$\phi$ is the identity on $G\cap G^\prime$, 
\item \label{la: anti-iso}
$\phi$ induces an anti-isomorphism $G/(G\cap G^\prime) \rightarrow G^\prime/(G\cap G^\prime)$. 
\end{enumerate}
\end{enumerate}
\end{lemma}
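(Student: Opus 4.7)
The plan is to exploit three features of the setup: that $G'$ is a complement of $U$ in $\Gamma$ (so every element of $\Gamma$ has a unique factorization as an element of $G'$ times an element of $U$), that $G$ is a complement of $U$ (so in particular $G \cap U = \{1\}$), and that both $G$ and $G'$ are normal in $\Gamma$. Part (a) will then be a direct consequence of the uniqueness of the $G'U$-decomposition: given $g \in G \subset \Gamma = G'U$, write $g = g' u^{-1}$ with $g' \in G'$, $u \in U$; the element $\gamma := u^{-1}$ is the required one, and its uniqueness follows from that of the factorization. This exhibits $\phi$ as the restriction to $G$ of the ``$G'$-component'' projection associated with the complement $G'$ of $U$.

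For (b), I would establish the stronger statement $[G, G'] \subseteq G \cap G'$. Given $g \in G$ and $g' \in G'$, I bracket the commutator two ways: $[g,g'] = (gg'g^{-1})(g')^{-1} \in G'$ by normality of $G'$, and $[g,g'] = g(g'g^{-1}(g')^{-1}) \in G$ by normality of $G$. Hence $[g,g'] \in G \cap G'$, which is precisely the commutativity asserted modulo $G \cap G'$.

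Part (c) then breaks up. For bijectivity of $\phi$, injectivity follows from $G \cap U = \{1\}$: if $\phi(g_1) = \phi(g_2)$, then $g_2^{-1} g_1 = \gamma_2 \gamma_1^{-1}$ lies in $G \cap U$, hence is trivial. Surjectivity uses $\Gamma = GU$: writing any $g' \in G'$ as $gu$ with $g \in G$, $u \in U$, one sees $\phi(g) = g'$. For the cocycle identity (i), I would compute $(g_1 \phi(g_2) g_1^{-1})\phi(g_1) = g_1 g_2 \gamma_2 \gamma_1$ directly, observe that the left side lies in $G'$ by normality of $G'$, and invoke uniqueness in (a) to conclude $\phi(g_1 g_2) = g_1 g_2 \gamma_2 \gamma_1 = \phi(g_2)^{g_1}\phi(g_1)$. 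Part (ii) is immediate: for $g \in G \cap G'$ the choice $\gamma = 1$ works and uniqueness forces $\phi(g) = g$. For (iii), I combine (i) with (b): modulo $G \cap G'$ one has $\phi(g_2)^{g_1} \equiv \phi(g_2)$, so the cocycle degenerates to the anti-homomorphism identity $\overline{\phi}(g_1 g_2) = \overline{\phi}(g_2)\overline{\phi}(g_1)$. Well-definedness of $\overline{\phi}$ on the quotients follows from (ii) together with the observation that $\phi^{-1}(G \cap G') = G \cap G'$ (since $g\gamma \in G$ forces $\gamma \in G \cap U = \{1\}$). Bijectivity of $\overline{\phi}$ is inherited from $\phi$, so $\overline{\phi}$ is an anti-isomorphism.

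The main subtle point will be the non-abelian cocycle (i): the identity is not the naive $\phi(g_1g_2) = \phi(g_1)\phi(g_2)$, and the order of the factors on the right has to be read off correctly from the projection interpretation of $\phi$. Once this bookkeeping is in place, (iii) is essentially forced by combining (i) and (b), and everything else is a routine application of the complement and normality hypotheses.
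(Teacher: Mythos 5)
Your argument is correct and follows essentially the same route as the paper's proof: part (a) from the unique $G'U$-factorization, part (b) by bracketing the commutator two ways to land in $G\cap G'$, and part (c) by the same direct computation $\phi(g_2)^{g_1}\phi(g_1)=(g_1g_2)(\gamma_2\gamma_1)$ combined with the uniqueness from (a), then reducing modulo $G\cap G'$ and computing the kernel exactly as the paper does. The only quibbles are cosmetic: in (a), with the factorization written as $g=g'u^{-1}$ the required element is $\gamma=u$, not $u^{-1}$; and your direct surjectivity argument for $\phi$ via $\Gamma=GU$ is marginally more robust than the paper's counting argument $|G|=|G'|$, which tacitly uses finiteness.
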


\begin{proof} (\ref{la: unique elt}): Every element $g\in \Gamma$ is uniquely of the form $g=uv$ with $u\in G^\prime$, $v \in U$. Applying this to every $g\in G\subset \Gamma$ gives the assertion.
\vskip 2pt

(\ref{la: commutes}): Let $g\in G$ and $g^{\prime} \in G^\prime$. As both $G$ and $G^\prime$ are normal in $\Gamma$, the commutator $gg^{\prime} g^{-1} (g^\prime)^{-1}$ is in $G\cap G^\prime$, whence the assertion.

(\ref{la: bij}) If $g_i \in G$ and $\gamma_i \in U$ with $g_i \gamma_i \in G'$ for $i=1,2$, then 
$g_1^{-1}g_2 =  \gamma_1 \gamma_2^{-1} \in G\cap U = \{1\}$, using that $G,U$ are complements.  This shows that $\phi$ is injective.  Hence $\phi$ is bijective as $|G|=|G'|$.  It remains to show assertions 
\ref{la: cocycle} -- \ref{la: anti-iso} of (\ref{la: bij}).

(\ref{la: bij})\ref{la: cocycle}: Write $\phi(g_1)= g_1 \gamma_1$ and $\phi(g_2)= g_2 \gamma_2$. Then we have
$$\phi(g_2)^{g_1} \phi(g_1) = g_1 (g_2 \gamma_2) g_1^{-1} (g_1 \gamma_1) = (g_1g_2) (\gamma_2 \gamma_1).$$
As $G^\prime$ is normal in $\Gamma$, $\phi(g_2)^{g_1}\in G^\prime$. As $\gamma_2 \gamma_1 \in U$, we obtain 
$$\phi(g_2)^{g_1} \phi(g_1) = \phi(g_1 g_2).$$

(\ref{la: bij})\ref{la: id on int} is clear.

(\ref{la: bij})\ref{la: anti-iso}: By (\ref{la: bij})\ref{la: cocycle} and (\ref{la: commutes}), it follows that the map $\overline \phi: G \rightarrow G^\prime/(G\cap G^\prime)$ that sends every element $g\in G$ to the coset $\phi(g) (G\cap G^\prime)$ satisfies $\overline \phi(g_1 g_2) = \overline \phi(g_2) \overline \phi(g_1)$, {\it i.e.} is an anti-morphism. From (\ref{la: bij})\ref{la: id on int}, $G\cap G^{\prime} \subset {\rm ker}(\overline \phi)$. The other containment $G\cap G^{\prime} \supset {\rm ker}(\overline \phi)$ is clear: if $\phi(g)\in G\cap G^\prime$, there exists $\gamma\in U$ such that $g\gamma\in G\cap G^\prime$; but then $\gamma\in G\cap U$ so $\gamma = 1$ and $g\in G\cap G^\prime$. Therefore $\overline \phi$ induces an injective anti-morphism $G/(G\cap G^\prime) \rightarrow G^\prime/(G\cap G^\prime)$, which is bijective as $|G| = |G^\prime|$.
\end{proof}

\begin{proposition} \label{prop:simple}
Let $G$ be a finite simple group.
\renewcommand{\theenumi}{\alph{enumi}}
\renewcommand{\labelenumi}{(\alph{enumi})}
\begin{enumerate}
\item
If $G$ is a normal complement to a subgroup $U$ in a group $\Gamma$, then every normal complement of $U$ in $\Gamma$ is isomorphic to $G$. 
\item
Hence if $E/k$ is a pre-Galois extension for which the simple group $G$ is a pre-Galois group, then every 
pre-Galois group of $E/k$ is isomorphic to $G$.
\end{enumerate}
\end{proposition}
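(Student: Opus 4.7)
The plan is to deduce this from Lemma~\ref{lem:unique-complement}, in particular from the anti-isomorphism in part~(\ref{la: bij})\ref{la: anti-iso}, together with the simplicity of $G$.

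For part (a), suppose $G$ and $G'$ are both normal complements of $U$ in $\Gamma$, with $G$ simple. The intersection $G\cap G'$ is normal in $G$ (being the intersection of $G$ with the normal subgroup $G'$ of $\Gamma$), so simplicity forces $G\cap G'\in \{\{1\},G\}$. If $G\cap G' = G$, then $G\subset G'$; but $|G|=|G'|=[\Gamma:U]$, so $G=G'$ and we are done. If instead $G\cap G'=\{1\}$, then Lemma~\ref{lem:unique-complement}(\ref{la: bij})\ref{la: anti-iso} produces an anti-isomorphism $\phi\colon G \to G'$. Composing with inversion $g'\mapsto (g')^{-1}$ on $G'$ (which is an anti-automorphism of $G'$) gives an isomorphism $G\to G'$. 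Hence $G'\cong G$ in either case.

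For part (b), suppose $E/k$ is pre-Galois with pre-Galois group $G$ simple. By Corollary~\ref{prop:potentiallyGoG_characterization_cor}(\ref{pot cor complem}), the pre-Galois groups of $E/k$ are, up to isomorphism, exactly the normal complements of $U:=\Gal(\widehat E/E)$ in $\Gamma:=\Gal(\widehat E/k)$. Applying part (a) with this specific $\Gamma$ and $U$ shows that any other such normal complement is isomorphic to $G$, so the pre-Galois group is unique up to isomorphism.

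The only mildly subtle point is the passage from an anti-isomorphism to an isomorphism in the $G\cap G' = \{1\}$ case; but this is standard since inversion is always an anti-automorphism of a group, so composing two anti-isomorphisms yields an isomorphism. No real obstacle remains, as the simplicity hypothesis is precisely what collapses the quotient $G/(G\cap G')$ appearing in the lemma down to either $G$ or the trivial group.
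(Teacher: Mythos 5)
Your proof is correct and follows essentially the same route as the paper's: both use simplicity of $G$ to force $G\cap G'=\{1\}$ (the case $G\cap G'=G$ being trivial), then invoke the anti-isomorphism of Lemma~\ref{lem:unique-complement} and compose with inversion to obtain an isomorphism, and both deduce part (b) from Corollary~\ref{prop:potentiallyGoG_characterization_cor}(\ref{pot cor complem}). Your explicit handling of the case $G\cap G'=G$ is a minor point of added care over the paper's wording, but the argument is identical in substance.
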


\begin{proof}
The second assertion is immediate from the first assertion and Corollary~\ref{prop:potentiallyGoG_characterization_cor}(\ref{pot cor complem}).  For the first assertion,
let $G'$ be another normal complement of $U$ in $\Gamma$.  
As $G\cap G^\prime$ is normal in $G$, it follows that $G\cap G^\prime=\{1\}$.  Thus there is an 
anti-isomorphism $\phi:G \to G'$ by Lemma~\ref{lem:unique-complement}; and hence the map $\phi^\ast:G\rightarrow G^\prime$ sending each $g\in G$ to $\phi(g)^{-1}$ is an isomorphism. 
\end{proof}

As the following example shows, it is possible for a field extension
to be pre-Galois with respect to one group, and to be potentially Galois but {\em not} pre-Galois with respect to a different group.  In particular, among the minimal field extensions $\LL(G)$ ($G\in \mathscr G$), 
it is possible that one of them is Galois over $k$ and another is not.

\begin{example} \label{preGal and potGal ex}
Let ${\gp} = S_4$ and let $M$ be the subgroup of permutations fixing $4$; thus 
$M$ is isomorphic to $S_3$.
The Klein four subgroup $G$ of $S_4$, generated by the transpositions $(1\ 2)$ and $(3\ 4)$,
 is a normal complement of $M$.  
But the cyclic group $G'$ generated by $(1\ 2\ 3\ 4)$ is a non-normal complement of $M$,  
and $G'$ is not isomorphic to $G$.
Now let $\widehat E/k$ be a Galois field extension of group ${\gp}$ and let $E$ be the fixed field of $M$, so that $[E:k]=4$ and $\widehat E$ is the Galois closure of $E/k$.  Let $L,L'$ be the fixed fields in $\widehat E$ of $G,G'$, respectively.  Then $E \otimes_k L = E \otimes_k L' = \widehat E$ is Galois over $L$ with group $G$, and is Galois over $L'$ with group $G'$. So 
$E/k$ is potentially Galois of group $G$, and of group $G'$. Furthermore $L/k$ is Galois (as $G$ is normal in $\Gamma$), so $E/k$ is pre-Galois of group $G$. On the other hand, $\Gamma$ has no normal cyclic subgroup of order $4$; in particular, $M$ has no normal cyclic complement (of order $4$). So, from Corollary \ref{prop:potentiallyGoG_characterization_cor}(\ref{pot cor SZ}$), E/k$ is not pre-Galois with respect to the cyclic group of order $4$. Summarizing, $E/k$ is pre-Galois with respect to the Klein four group, whereas it is potentially Galois but not pre-Galois with respect to the cyclic group of order four.
\end{example}

Given a field $k$ and two finite field extensions $E$ and $L$ of $k$, if $E$ is Galois over $k$ then the compositum $EL$ is Galois over $L$.  But the corresponding property does not in general hold for potentially Galois extensions, as the next example shows.

\begin{example}
Consider a Galois extension $N/k$ of group $S_6$. Let $H$ be the subgroup
generated by the $6$-cycle $(1\  2\  3\  4\  5\  6)$ and let
and $E=N^H$ be the fixed field of $H$ in $N$. The Galois closure of $E/k$ is $N/k$, and $H$ 
has a complement in $S_6$, e.g.\ the copy of $S_5\subset S_6$ fixing $6$. Hence 
$E/k$ is potentially Galois, of group $S_5$. 

Next, let $L=N^{A_6}$ be the fixed field of $A_6\subset S_6$ in $N$. Then
\begin{itemize}
\item $NL = N$ (as $L\subset N$),
\item $EL =  N^H N^{A_6} = N^{H\cap A_6}$,
\item $N/EL$ is Galois, and its Galois group is the order $3$ subgroup  $H\cap A_6$ generated by 
the square of $(1\  2\  3\  4\  5\  6)$. 
\end{itemize}
Thus $N/L$ is the Galois closure of $EL/L$, and $\Gal(N/EL)$ has no complement 
in the group $\Gal(N/L)=A_6$ (as such a complement would be of order $120$ and $A_6$
has no subgroup of order $120$). Therefore $EL/L$ is not potentially Galois, even though $E/k$ is potentially Galois.
\end{example}

\subsection{Potential Galois correspondence} \label{pot Gal corresp}

As we discuss next, there is an analog of the Galois correspondence for potentially Galois extensions (see Question~\ref{inv_ppGal}(\ref{pot Gal cor q})).

Let $E/k$ be a degree $d$ potentially Galois extension and $L/k$ be an extension with $EL/L$ Galois 
of degree $d$. Let $\widehat E$ be the Galois closure of $E$ over $k$. We produce below a 1-1 correspondence between the set of sub-extensions of $E/k$ and a certain subset of subgroups of the potential Galois group $\GG(L)={\rm res}(\Gal(EL/L))$, where ${\rm res}$ is as before the restriction map
$\Gal(EL/L) \rightarrow \Gal(\widehat E/k)$.

Set ${\gp}=\Gal(\widehat E/k)$ and ${\gp}_E=\Gal(\widehat E/E)$.  
Let $\mathrm{Subgp}_{\gp}(\GG(L))$ be the set of subgroups $H\subset \GG(L)$ such that the product set 
$H{\gp}_E := \{hg\,|\,h \in H, g \in {\gp}_E\}$ is a subgroup of 
${\gp}$ (or equivalently, such that $H{\gp}_E={\gp}_EH$).  Let $\mathrm{Subfld}_k(E)$ be the set of subfields of $E$ that contain $k$.  We then define maps between these sets as follows:

\begin{itemize}
\item
$\varepsilon_L:\mathrm{Subgp}_{\gp}(\GG(L)) \to \mathrm{Subfld}_k(E)$, $\varepsilon_L(H) = E^H := E \cap \widehat E^H$.
\item
$\eta_L:\mathrm{Subfld}_k(E) \to \mathrm{Subgp}_{\gp}(\GG(L))$, $\eta_L(E^\prime/k) = {\rm res}(\Gal(EL/E^\prime L))$.
Note that for every sub-extension $E^\prime/k$ of $E/k$, the extension $EL/E^\prime L$ is Galois of degree $[E:E^\prime]$ because $E/k$ is potentially Galois over $L$; so $E/E^\prime$ is potentially Galois over $L$.
\end{itemize}

\begin{proposition} \label{prop:Galois_correspondence} 
Let $E/k$ be a finite field extension that is potentially Galois over $L$.
Let $\widehat E$ be the Galois closure of $E$ over $k$, and set ${\gp}=\Gal(\widehat E/k)$ and ${\gp}_E=\Gal(\widehat E/E)$.
Then the maps $\varepsilon_L$ and $\eta_L$ as above define a bijective ``potential Galois correspondence'' between the set of subgroups  
$\mathrm{Subgp}_{\gp}(\GG(L))$ and the set of subfields $\mathrm{Subfld}_k(E)$.

More precisely, for every subfield $E' \in \mathrm{Subfld}_k(E)$ and every subgroup $H \in \mathrm{Subgp}_{\gp}(\GG(L))$,
\begin{itemize}
\item 
$\varepsilon_L(H) = E^H/k$  is a sub-extension of $E/k$ of subdegree equal to $|H|$ {\rm (}i.e. $[E:E^H] = |H|${\rm )},
\item
$\eta_L(E^\prime/k)= {\rm res}(\Gal(EL/E^\prime L))$ is a subgroup of $\GG(L)$ such that $\eta_L(E^\prime/k) \hskip 1pt {\gp}_E$ is a subgroup of ${\gp}$ and is of order $[E:E^\prime]$, 
\item
$\varepsilon_L\circ \eta_L(E^\prime/k) = E^\prime/k$ and $\eta_L \circ \varepsilon_L (H) = H$.
\end{itemize}
Furthermore, if $L/k$ is Galois and $H$ is a characteristic subgroup of $\GG(L)$, then $H {\gp}_E$ is a subgroup of ${\gp}$ and the extension $\varepsilon_L(H) = E^H/k$ is  pre-Galois over $L$, of group $\GG(L)/H$.
\end{proposition}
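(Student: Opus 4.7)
The plan is to exploit two structural facts from the preceding material: first, that the restriction map $\mathrm{res}:\Gal(EL/L)\rightarrow \gp=\Gal(\widehat E/k)$ is an injection with image $\GG(L)$, and that $\GG(L)$ is a complement of $\gp_E=\Gal(\widehat E/E)$ in $\gp$ (Proposition~\ref{prop:potentially_characterization}); and second, the resulting ZS-decomposition from Proposition~\ref{ZS_equiv}, which writes every $\tau\in \gp$ uniquely as $\tau=g\gamma$ with $g\in \GG(L)$ and $\gamma\in \gp_E$, together with $\GG(L)\cap \gp_E=\{1\}$.

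With this in hand, I would first analyze $\varepsilon_L$. For $H\in \mathrm{Subgp}_\gp(\GG(L))$, the hypothesis that the set product $H\gp_E$ is a subgroup means it is simply the join $\langle H,\gp_E\rangle$, whence $E^H = E\cap \widehat E^H = \widehat E^{\gp_E}\cap \widehat E^H = \widehat E^{H\gp_E}$. Since $H\cap \gp_E\subset \GG(L)\cap \gp_E=\{1\}$, we have $|H\gp_E|=|H|\,|\gp_E|$, so
\[
[E:E^H]=[\widehat E^{\gp_E}:\widehat E^{H\gp_E}]=|H\gp_E|/|\gp_E|=|H|.
\]
Next I would analyze $\eta_L$. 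For a subfield $E'$ with $k\subset E'\subset E$, linear disjointness of $E,L$ over $k$ descends to $E',L$, so $[E'L:L]=[E':k]$; hence $EL/E'L$ is Galois of degree $[E:E']$. An element $\sigma\in \Gal(EL/L)$ fixes $E'L$ iff it fixes $E'$, which occurs iff $\mathrm{res}(\sigma)\in \Gal(\widehat E/E')$. Therefore $\eta_L(E'/k)=\GG(L)\cap \Gal(\widehat E/E')$. Writing any $\tau\in \Gal(\widehat E/E')$ as $\tau=g\gamma$ (ZS-decomposition), the fact that $\gamma\in \gp_E$ fixes $E'\subset E$ forces $g=\tau\gamma^{-1}$ to fix $E'$, hence $g\in \eta_L(E'/k)$; so $\Gal(\widehat E/E')=\eta_L(E'/k)\,\gp_E$, which is therefore a subgroup of $\gp$. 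Counting orders, $|\eta_L(E'/k)|=[\widehat E:E']/|\gp_E|=[E:E']$.

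The mutual-inverse identities then fall out of the previous two steps: $\varepsilon_L(\eta_L(E'/k))=\widehat E^{\eta_L(E'/k)\gp_E}=\widehat E^{\Gal(\widehat E/E')}=E'$; and for $H\in \mathrm{Subgp}_\gp(\GG(L))$, the identity $\eta_L(\varepsilon_L(H))=\GG(L)\cap \Gal(\widehat E/E^H)=\GG(L)\cap H\gp_E=H$ follows because any $g\in \GG(L)$ of the form $h\gamma$ with $h\in H\subset \GG(L)$, $\gamma\in \gp_E$ forces $\gamma=h^{-1}g\in \GG(L)\cap \gp_E=\{1\}$, so $g=h\in H$.

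For the final assertion, assume $L/k$ is Galois and $H$ is characteristic in $\GG(L)$. By Theorem~\ref{prop:potentiallyGoG_characterization}(\ref{min subextens}), $\GG(L)$ is normal in $\gp$; conjugation by any $\tau\in \gp$ then restricts to an automorphism of $\GG(L)$, which preserves the characteristic subgroup $H$. Thus $H$ is normal in $\gp$, and in particular $H\gp_E=\gp_E H$ is a subgroup. Via the isomorphism $\mathrm{res}:\Gal(EL/L)\xrightarrow{\sim}\GG(L)$, the preimage of $H$ is $\Gal(EL/E^H L)$, which is therefore normal in $\Gal(EL/L)$, making $E^H L/L$ Galois with group $\GG(L)/H$. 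Since $[E^H L:L]=[EL:L]/|H|=[E:k]/|H|=[E^H:k]$ by the degree count just established, the extension $E^H/k$ is potentially Galois over $L$ with group $\GG(L)/H$, and pre-Galois over $L$ because $L/k$ is Galois.

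The main obstacle is keeping the two pictures straight: the intrinsic Galois-theoretic side (subgroups of $\gp$ containing $\gp_E$ correspond to subfields of $E/k$) and the base-changed side (subgroups of $\Gal(EL/L)$ correspond to subfields of $EL/L$). The key bridge is the identity $\Gal(\widehat E/E')=\eta_L(E'/k)\,\gp_E$, which translates the ``classical'' Galois correspondence inside $\gp$ into a correspondence visible inside the complement $\GG(L)$; once this is in place, the remaining computations are routine index counts.
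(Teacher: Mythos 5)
Your proof is correct and follows essentially the same route as the paper's: both transport the classical Galois correspondence for $\widehat E/k$ through the bijection $H \leftrightarrow H{\gp}_E$ between subgroups $H\subset \GG(L)$ with $H{\gp}_E$ a subgroup and subgroups of ${\gp}$ containing ${\gp}_E$ (which the paper isolates as Lemma~\ref{lemma:alg_correspondence}). The only differences are in execution: you prove the key identity $\Gal(\widehat E/E') = \eta_L(E'/k)\,{\gp}_E$ by decomposing each element as $g\gamma$ with $g\in \GG(L)$, $\gamma\in {\gp}_E$ rather than by the paper's index count, and you identify $\eta_L(E'/k)$ with $\GG(L)\cap\Gal(\widehat E/E')$ directly instead of passing through the minimal field $L_0$ as the paper does.
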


The proof of Proposition~\ref{prop:Galois_correspondence} starts with this pure group-theoretical lemma.

\begin{lemma} \label{lemma:alg_correspondence}
Let ${\gp} = GA$ be the ZS-product of two subgroups $G$ and $A$. There is an index preserving 1-1 correspondence $\varepsilon$ between the collection ${\mathcal F}_{G}(A)$ of subgroups $H\subset G$ such that $HA$ is a subgroup of ${\gp}$ and the collection ${\mathcal N}_{{\gp}}(A)$ of the subgroups of ${\gp}$ that contain $A$.
\vskip 1mm

\noindent
More precisely, this 1-1 correspondence is given by the maps 
$$\begin{matrix}
\varepsilon: &{\mathcal F}_{G}(A) & \rightarrow & {\mathcal N}_{{\gp}}(A) \cr
&H& \mapsto & HA \cr
\end{matrix}
\hskip 8mm \hbox{\it and} \hskip 8mm
\begin{matrix}
\varepsilon^{-1}: &{\mathcal N}_{{\gp}}(A)  & \rightarrow & {\mathcal F}_{G}(A)\cr
&H^\prime& \mapsto & G\cap H^\prime \cr
\end{matrix}
$$ 
\end{lemma}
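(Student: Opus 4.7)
The approach is to verify that $\varepsilon$ and $\varepsilon^{-1}$ are mutually inverse, well-defined maps, and then separately establish the index equality. The underlying tool throughout is the \emph{uniqueness} of the ZS factorization $\gamma = ga$ with $g\in G$, $a\in A$.

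\textbf{Step 1 ($\varepsilon^{-1}\circ\varepsilon=\mathrm{id}$).} Let $H\in{\mathcal F}_G(A)$; I would show $G\cap HA=H$. The inclusion $H\subset G\cap HA$ is trivial. Conversely, any $g\in G\cap HA$ admits the two factorizations $g=g\cdot 1=h\cdot a$ with $h\in H\subset G$ and $a\in A$; uniqueness of the ZS factorization forces $h=g$ and $a=1$, so $g\in H$.

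\textbf{Step 2 ($\varepsilon\circ\varepsilon^{-1}=\mathrm{id}$, and well-definedness of $\varepsilon^{-1}$).} Let $H'\in{\mathcal N}_{\gp}(A)$; I would prove the stronger fact $(G\cap H')A=H'$. The inclusion $\subset$ is immediate since $G\cap H'\subset H'$ and $A\subset H'$. For the reverse, take $\gamma\in H'$ and write uniquely $\gamma=ga$ with $g\in G$, $a\in A$; since $A\subset H'$, we get $g=\gamma a^{-1}\in H'$, hence $g\in G\cap H'$ and $\gamma\in(G\cap H')A$. This identity simultaneously proves that $\varepsilon(\varepsilon^{-1}(H'))=H'$ and that $G\cap H'$ belongs to ${\mathcal F}_G(A)$, since $(G\cap H')A=H'$ is a subgroup by hypothesis. (The latter is really the only subtle point: a priori one does not know that $G\cap H'$ has the property that its product with $A$ is a group, and this is where the hypothesis $A\subset H'$ is used.)

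\textbf{Step 3 (Index preservation).} For $H\in{\mathcal F}_G(A)$, I would exhibit the map $G/H\to{\gp}/HA$, $gH\mapsto gHA$. It is well defined and surjective because every $\gamma\in{\gp}$ equals $ga$ with $g\in G$, $a\in A$, so $\gamma HA=gHA$. It is injective because $g_1HA=g_2HA$ gives $g_1^{-1}g_2\in G\cap HA=H$ by Step~1. Hence $[{\gp}:HA]=[G:H]$, without any finiteness assumption.

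The main obstacle, as flagged in Step 2, is checking that $G\cap H'$ automatically lies in ${\mathcal F}_G(A)$; everything else is a direct application of ZS uniqueness. This obstacle is resolved precisely by the identity $(G\cap H')A=H'$, which is the key computation of the proof.
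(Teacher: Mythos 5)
Your proof is correct, and it reaches the same two structural identities as the paper ($G\cap HA=H$ and $(G\cap H')A=H'$) but by a genuinely different method. The paper's proof is counting-based: it first computes $|{\gp}|/|HA|=|G|/|H|$ and $|G|/|G\cap H'|=|GH'|/|H'|=|{\gp}|/|H'|$, and then deduces $(G\cap H')A=H'$ from the obvious inclusion $(G\cap H')A\subset H'$ together with an equality of indices; index preservation is thus the \emph{input} from which the key identity is derived. You instead prove both identities element-wise from the uniqueness of the Zappa--Sz\'ep factorization (for $(G\cap H')A\supset H'$, writing $\gamma=ga$ and using $a\in A\subset H'$ to get $g\in G\cap H'$), and you obtain index preservation as an \emph{output}, via the explicit bijection $G/H\to{\gp}/HA$. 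You also correctly isolate the one genuinely non-obvious point, namely that $G\cap H'$ lands in ${\mathcal F}_G(A)$, and resolve it by the same identity $(G\cap H')A=H'$ that the paper uses. What your route buys is independence from finiteness (the paper's order computations implicitly assume ${\gp}$ finite, which suffices for its applications to Galois groups but is not needed in your argument); what the paper's route buys is brevity, since the two index formulas do double duty for both the index-preservation claim and the surjectivity of $\varepsilon$.
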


\begin{proof}
For $H\in  {\mathcal F}_{G}(A)$ and $H^\prime \in {\mathcal N}_{{\gp}}(A)$, we have 
$$|{\gp}|/|\varepsilon(H)|=|{\gp}|/|H A| = |G| |A| /|H| |A| = |G|/|H|$$ 
\noindent
and
$$|G|/\varepsilon^{-1}(H^\prime)|=|G|/|G\cap H^\prime| = |G H^\prime|/|H^\prime| = |{\gp}|/|H^\prime|$$

\noindent
(as $G H^\prime \supset GA={\gp}$). Thus the maps $\varepsilon$ and $\varepsilon^{-1}$ preserve the indices. 

Now the containment $(G\cap H^\prime)\hskip 1pt A \subset H^\prime$ is clear; 
and using the above  we obtain
\[ [{\gp}:H^\prime] = [G:(G\cap H^\prime)] = [GA :(G\cap H^\prime)\hskip 1pt A] = [{\gp} :(G\cap H^\prime)\hskip 1pt A]. \]
Hence 
$(G\cap H^\prime) \hskip 1pt A = H^\prime$ and so $\varepsilon^{-1}(H') \in  {\mathcal F}_{G}(A)$. 

It is then easily checked that the map $\varepsilon:  {\mathcal F}_{G}(A)\rightarrow {\mathcal N}_{{\gp}}(A)$ and $\varepsilon^{-1}: {\mathcal N}_{{\gp}}(A) \rightarrow  {\mathcal F}_{G}(A)$ are inverse to each other. Namely $\varepsilon \circ \varepsilon^{-1}= {\rm Id}$ means that 
$(G\cap H^\prime)\hskip 1pt A = H^\prime$ for every $H^\prime\in {\mathcal N}_{{\gp}}(A)$, which has just been checked. And we have $\varepsilon^{-1} \circ \varepsilon = {\rm Id}$: if $H\in  {\mathcal F}_{G}(A)$, then $(HA) \cap G = H$; the containment $\supset$ is clear and the converse one easily follows from $G\cap A=\{1\}$.
\end{proof}

\begin{proof}[Proof of Proposition \ref{prop:Galois_correspondence}]
Let $A = {\gp}_E = \Gal(\widehat E/E)$.
In the notation of Lemma~\ref{lemma:alg_correspondence}, $\mathrm{Subgp}_{\gp}(\GG(L)) = {\mathcal F}_{\GG(L)}(A)$.
Note that the group ${\gp} = \Gal(\widehat E/k)$ is the ZS-product of $\GG(L)$ with $A$.  
So Lemma~\ref{lemma:alg_correspondence} applies, and we obtain a bijection 
$\varepsilon: \mathrm{Subgp}_{\gp}(\GG(L)) \to {\mathcal N}_{{\gp}}(A)$.
We will show that the  ``potential Galois correspondence'' of Proposition \ref{prop:Galois_correspondence} is obtained by composing this bijection with the classical Galois correspondence $\gamma:{\mathcal N}_{{\gp}}(A) \to \mathrm{Subfld}_k(E)$. 
\vskip 0,5mm

First, for $H\in \mathrm{Subgp}_{\gp}(\GG(L))$, we have
$\gamma \circ \varepsilon (H) = \widehat E^{HA}/k = E^H/k = \varepsilon_L (H)$. Second, to check that 
$\varepsilon^{-1} \circ \gamma^{-1} = \eta_L$, we introduce the unique $L_0\in \mathscr L$ such that 
$L_0\subset L$ (see Theorem~\ref{prop:potentiallyGoG_characterization}); recall that $\GG(L)=\GG(L_0)$.
For $E^\prime/k \in \mathrm{Subfld}_k(E)$, we have:
\begin{align*}
\varepsilon^{-1} \circ \gamma^{-1}(E^\prime/k) &= \varepsilon^{-1}(\Gal(\widehat E/E^\prime))\\
&=\GG(L)\cap \Gal(\widehat E/E^\prime)\\
&=\GG(L_0) \cap \Gal(\widehat E/E^\prime)\\
&=\Gal(\widehat E/L_0)\cap \Gal(\widehat E/E^\prime)\\
&=\Gal(\widehat E/E^\prime L_0)\\
&=\Gal(E L_0 /E^\prime L_0)\\
&={\rm res}(\Gal(E L /E^\prime L))  = \eta_L(E^\prime/k). 
\end{align*}

For the final part of of Proposition \ref{prop:Galois_correspondence}, assume that $L/k$ is Galois. Then the 
extension $E/k$ is  pre-Galois and $\widehat E/k$ is Galois of group a semi-direct product $\GG(L)\rtimes {\gp}_E$.
Being a characteristic subgroup of $\GG(L)$ implies for the subgroup $H$ that it is a normal subgroup of  $\GG(L)\rtimes {\gp}_E$. In particular, $H {\gp}_E$ is a subgroup of ${\gp}$. By the above, $\varepsilon_L(H) = E^H/k$ is potentially Galois over $L$, and so pre-Galois over $L$ (as $L/k$ is Galois), and $E^HL/L$ is Galois of group $\GG(L)/H$.
\end{proof}

\begin{corollary} 
Let $E/k$ be a finite field extension that is potentially Galois over $L$. As above,
set ${\gp}=\Gal(\widehat E/k)$ and ${\gp}_E=\Gal(\widehat E/E)$.
\renewcommand{\theenumi}{\alph{enumi}}
\renewcommand{\labelenumi}{(\alph{enumi})}
\begin{enumerate}
\item \label{Galois corres-corol-a}
The base change correspondence $E^\prime/k \mapsto E^\prime L/L$
yields a 1-1 correspondence between the set $\mathrm{Subfld}_k(E)$
and the set of sub-extensions $F/L$ of $EL/L$ such that the product set
$\Gal(EL/F)\, {\gp}_E$ is a subgroup of ${\gp}$.
\item \label{Galois corres-corol-b}
If $L,L^\prime \in \mathscr L$, the map $\eta_{L^\prime} \circ \varepsilon_L$ yields a 1-1 correspondence between the two sets $\mathrm{Subgp}_{\gp}(\GG(L))$ and $\mathrm{Subgp}_{\gp}(\GG(L^\prime))$. \end{enumerate}
\end{corollary}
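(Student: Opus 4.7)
The plan is to deduce both parts of this corollary directly from Proposition~\ref{prop:Galois_correspondence}. For~(\ref{Galois corres-corol-a}) the strategy is to run the classical Galois correspondence for the Galois extension $EL/L$, then push its subgroup side through the restriction isomorphism ${\rm res}:\Gal(EL/L)\xrightarrow{\sim}\GG(L)$ recalled in Notation~\ref{rem:G(L)}, and finally splice in the bijection $\varepsilon_L$ of Proposition~\ref{prop:Galois_correspondence}. Part~(\ref{Galois corres-corol-b}) will then be a one-line consequence.

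Concretely for~(\ref{Galois corres-corol-a}), classical Galois theory provides a bijection $F\mapsto \Gal(EL/F)$ between sub-extensions $F/L$ of $EL/L$ and subgroups of $\Gal(EL/L)$, and ${\rm res}$ carries these subgroups bijectively onto subgroups of $\GG(L)$. Since $\Gal(EL/F)$ is viewed inside ${\gp}$ via ${\rm res}$, the condition in the corollary that $\Gal(EL/F)\,{\gp}_E$ be a subgroup of ${\gp}$ is exactly the defining condition of $\mathrm{Subgp}_{\gp}(\GG(L))$ applied to the image of $\Gal(EL/F)$. Composing with $\varepsilon_L:\mathrm{Subgp}_{\gp}(\GG(L))\to\mathrm{Subfld}_k(E)$ yields a bijection between $\mathrm{Subfld}_k(E)$ and the set of admissible $F/L$ described in~(\ref{Galois corres-corol-a}). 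The last step is to check that this composite bijection is the base-change map: for $E'\in\mathrm{Subfld}_k(E)$, Proposition~\ref{prop:Galois_correspondence} gives $\eta_L(E'/k)={\rm res}(\Gal(EL/E'L))$; inverting ${\rm res}$ recovers the subgroup $\Gal(EL/E'L)$ of $\Gal(EL/L)$, whose fixed field in $EL$ is precisely $E'L$, as required.

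Part~(\ref{Galois corres-corol-b}) then follows at once from Proposition~\ref{prop:Galois_correspondence}, which supplies mutually inverse bijections $\varepsilon_L,\eta_L$ between $\mathrm{Subgp}_{\gp}(\GG(L))$ and $\mathrm{Subfld}_k(E)$, and likewise $\varepsilon_{L'},\eta_{L'}$ for $L'$. The common ``middle set'' $\mathrm{Subfld}_k(E)$ does not depend on the choice of $L\in\mathscr{L}$, so $\eta_{L'}\circ\varepsilon_L$ is the desired bijection $\mathrm{Subgp}_{\gp}(\GG(L))\to\mathrm{Subgp}_{\gp}(\GG(L'))$.

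I do not expect any serious obstacle. The only point requiring a moment of care is the translation, in~(\ref{Galois corres-corol-a}), of the subgroup-of-${\gp}$ condition through the restriction isomorphism; but this is a tautology once one recalls from Notation~\ref{rem:G(L)} that ${\rm res}$ is an injective group homomorphism with image exactly $\GG(L)$, so that it identifies subgroups of $\Gal(EL/L)$ with subgroups of $\GG(L)$ and respects products with ${\gp}_E$ inside ${\gp}$.
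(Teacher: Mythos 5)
Your proposal is correct and follows essentially the same route as the paper: part~(a) is obtained by composing the classical Galois correspondence for $EL/L$ (transported through the restriction isomorphism onto $\mathrm{Subgp}_{\gp}(\GG(L))$) with the bijection of Proposition~\ref{prop:Galois_correspondence}, and part~(b) is immediate from the bijectivity of $\varepsilon_L$ and $\eta_{L'}$. Your explicit check that the composite in~(a) agrees with the base-change map $E'\mapsto E'L$ is a welcome detail that the paper leaves implicit.
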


\begin{proof}
By the usual Galois correspondence the latter set in (\ref {Galois corres-corol-a}) is in bijection with the set $\mathrm{Subgp}_{\gp}(\GG(L))$.  So (\ref {Galois corres-corol-a}) follows from the 1-1 correspondence between $\mathrm{Subfld}_k(E)$ and $\mathrm{Subgp}_{\gp}(\GG(L))$ proved in Proposition~\ref{prop:Galois_correspondence}. Part (\ref{Galois corres-corol-b}) is clear since $\eta_{L^\prime}$ and $\varepsilon_L$ are bijective, by  Proposition~\ref{prop:Galois_correspondence}. 
(Part~(\ref{Galois corres-corol-b}) can also be deduced from a pure group-theoretical observation: with the notation of Lemma \ref{lemma:alg_correspondence},
if ${\gp} = GA = G^\prime A$ is the ZS-product of $G$ and $A$, and also of $G^\prime$ and $A$, then the correspondence $H\mapsto HA \cap G^\prime$ yields a 1-1 correspondence 
${\mathcal F}_{G}(A) \rightarrow {\mathcal F}_{G^\prime}(A)$. We leave the details to the reader.)
\end{proof}

\subsection{Related conditions} \label{subsec: Hopf}
A notion of ``Hopf Galois theory'' was introduced in \cite{ChaseSweedler} and studied further in \cite{greither-pareigis}.  That latter paper also considered conditions on field extensions that were more general than being Galois.
Given a finite separable field extension $E/k$ and a finite $k$-Hopf algebra $H$, they defined a notion of $E/k$ being ``$H$-Galois'' (pages~239-240 in \cite{greither-pareigis}), 
and a more restrictive notion of $E/k$ being ``almost classically Galois'' (pages~252-253 in \cite{greither-pareigis}). 

These two notions can each be formulated in terms of the left multiplication action of $\Gal(\widehat E/k)$ on the group ${\rm Perm}({\mathcal S})$ of permutations of the set $\mathcal S$ of left cosets of $\Gal(\widehat E/k)$ modulo $\Gal(\widehat E/E)$.  (As before, $\widehat E$ denotes the Galois closure of $E$ over $k$.)  Namely, according to \cite[Theorem 2.1]{greither-pareigis}, $E/k$ is $H$-Galois for some $k$-Hopf algebra $H$ if and only if there is a subgroup $G\subset {\rm Perm}({\mathcal S})$ acting freely and transitively
on ${\mathcal S}$ and which is normalized by the subgroup 
$\Gal(\widehat E/k)\subset {\rm Perm}({\mathcal S})$ (where the containment is via $\mu$).  By \cite[Proposition~4.1]{greither-pareigis}, $E/k$ is almost classically Galois
if and only if the above condition holds for some $G \subset \Gal(\widehat E/k)$.  The latter condition is strictly stronger than the former, by \cite[Corollary 4.4]{greither-pareigis}.  For these notions, partial analogs of the usual Galois correspondence were shown (with versions in \cite[Theorem~7.6]{ChaseSweedler}, \cite[Section~5]{greither-pareigis}, and \cite[Section~2]{CreRioVela}), though they do not include an analog of the usual bijection between normal subgroups and intermediate Galois extensions.

Meanwhile, from our Corollary \ref{prop:potentiallyGoG_characterization_cor}(\ref{pot cor SZ}), $E/k$ is 
potentially Galois if and only if there is a subgroup $G\subset \Gal(\widehat E/k)$ that acts freely and transitively on ${\mathcal S}$ via $\mu$, since this is equivalent to $G$ being a complement to $\Gal(\widehat E/E)$ in $\Gal(\widehat E/k)$.  Thus the condition of being almost classically Galois is a strengthening both of being $H$-Galois for some $H$ and also of being potentially Galois.
Moreover, given a finite separable extension $E/k$, with Galois closure $\widehat E/k$,
one of the equivalent conditions for $E/k$ to be almost classically Galois is that 
$\Gal(\widehat E/E)$ 
has a normal complement $G$ in $\Gal(\widehat E/k)$ (see \cite[Proposition~4.1]{greither-pareigis}).  So by Corollary~\ref{prop:potentiallyGoG_characterization_cor}(\ref{pot cor SZ}) above,
$E/k$ is almost classically Galois if and only if it is pre-Galois with group $G$.

By \cite[Introduction, Remark~3]{greither-pareigis}, it follows that $E/k$ is $H$-Galois if and only $\Spec(E)$ is a $\mu$-torsor over $k$, where the finite group scheme $\mu$ is $\Spec(H^*)$, and where $H^*$ is the dual Hopf algebra to $H$.  In particular, if $E/k$ is a Galois field extension of group $G$, 
then $\Spec(E)$ is a $G$-torsor, and $E/k$ is $H$-Galois with $H$ being the group ring $kG$; moreover, $E/k$ is almost classically Galois.  Thus every pre-Galois extension $K/k$ with group $G$ has the property that $\Spec(K)$ is a $\mu$-torsor over $k$ for some twisted form $\mu$ of $G$; but not conversely.
Also, every pre-Galois extension is potentially Galois, but not conversely (see Proposition~\ref{hilb flds pot Gal cond}(\ref{potentially but no pre})).  Thus

\centerline{$E/k$  pre-Galois $\Rightarrow$ $E/k$ potentially Galois + $\Spec(E)$ a torsor,}
\noindent 
and one can ask whether the converse holds:

\begin{question} \label{pre pot torsor}
Let $E/k$ be a finite separable field extension and let $G$ be a finite group.  If $E/k$ is potentially Galois with group $G$, and if $\Spec(E)$ is a $\mu$-torsor over $k$ where $\mu$ is a twisted form of $G$, must $E/k$ be pre-Galois with group $G$?
\end{question}

\begin{example}
Concerning the two hypotheses in Question~\ref{pre pot torsor} (being potentially Galois, and the spectrum being a torsor), we show by example that neither
implies the other.  
Note that otherwise, Question~\ref{pre pot torsor} could not have an affirmative answer, 
given the above comments about the separate converses not holding.
\renewcommand{\theenumi}{\alph{enumi}}
\renewcommand{\labelenumi}{(\alph{enumi})}
\begin{enumerate}
\item
Let $k=\Qq$ and let $E = k(\zeta_8) = k(\sqrt[4]{-1})$.  Then $\Spec(E)$ is $\mu_4$-torsor, and $\mu_4$ is a twisted form of $\Zz/4\Zz$.  But 
$E/k$ is a Galois extension with group $(\Zz/2\Zz)^2$; and so 
for any field extension $L/k$ such that $LE=L \otimes_k E$ is a field, $LE/L$ is also 
Galois with group $(\Zz/2\Zz)^2$.  Hence $LE/L$ is not Galois with group $\Zz/4\Zz$, and thus $E/k$ is not potentially Galois with group $\Zz/4\Zz$, even though $\Spec(E)$ is $\mu_4$-torsor.
\item
Let $E/k$ be a potentially Galois extension of degree $5$ that is not pre-Galois, as in 
Proposition~\ref{hilb flds pot Gal cond}(\ref{potentially but no pre}).  Thus its group is $\Zz/5\Zz$.  If $\Spec(E)$ is a $\mu$-torsor for some form $\mu$ of $\Zz/5\Zz$, then $\mu$ is given by an action of the absolute Galois group $\Gal(k)$ of $k$ on $\Zz/5\Zz$; i.e., by a homomorphism $\Gal(k) \to \Aut(\Zz/5\Zz) \cong \Zz/4\Zz$.  
The kernel of that action has index $1,2$, or $4$; and so $\mu$ becomes isomorphic to $\Zz/5\Zz$ over a Galois extension $k'/k$ of degree $1,2$, or $4$.  Since $k'/k$ is Galois of degree $5$, it follows that $k'$ and $E$ are linearly disjoint over $k$.  
So their compositum $k'E/k'$ has degree 5, and is isomorphic to $k'\otimes_k E$.  Thus $\Spec(k'E)$ is a $\mu_{k'}$-torsor over $k'$.  But $\mu_{k'}$ is just the constant group $\Zz/5\Zz$, and so $k'E/k'$ is a $\Zz/5\Zz$-Galois field extension.  
That says that $E/k$ is pre-Galois, and that is a contradiction.  Hence $\Spec(E)$ is not a $\mu$-torsor over $k$ for any form $\mu$ of $\Zz/5\Zz$, even though $E/k$ is potentially Galois with group $\Zz/5\Zz$.
(If $\Gal(\widehat E/k) = S_5$ or $A_5$, then one can also see that $\Spec(E)$ is not a $\mu$-torsor over $k$, or equivalently that $E/k$ is not $H$-Galois, by \cite[Corollary 4.8]{greither-pareigis}.)
\end{enumerate}
\end{example}

%%%%%%%%%%%%%%%%%%%%%%%%%%%%%%

\section{Function field extensions} \label{sec:function-fields}

We now investigate the pre-Galois notion  in the 
context of finite extensions of function fields.
Fix an arbitrary field $k$
and a regular projective geometrically irreducible $k$-variety $B$ of positive dimension, e.g.\ $B=\Pp^1_\Qq$.
We indicate the separable closure of $k$ by $k^{\sep}$ and its absolute Galois group by $\Gal(k)$.

\subsection{Geometrically Galois extensions} \label{ssec:geometrically_Galois}
Let $F/k(B)$ be a finite separable extension. It is called {\it $k$-regular} if $F \cap \overline k = k$;
and in this case, $F$ is the function field of a geometrically irreducible branched $k$-cover $X \to B$.

We sometimes view finite $k$-regular extensions $F/k(B)$ as fundamental group representations $\phi: \pi_1(B \smallsetminus D, t)_k \rightarrow S_d$. Here $D$ denotes the branch divisor of the extension $F\overline k/\overline k(B)$,
$t\in B \smallsetminus D$ a fixed {\it base point}, $\pi_1(B \smallsetminus D, t)_k$ the $k$-fundamental group of $B \smallsetminus D$ and $d=[F:k(B)]$. We refer to \cite{DeDo1} or \cite{DeLe2} for more on the correspondence.

Let $\widehat F/k(B)$ be the Galois closure of $F/k(B)$. It is easily checked that, for any overfield $k^\prime$ of $k$,
the Galois closure of the extension $Fk^\prime/k^\prime(B)$ is the extension $\widehat F k^\prime/k^\prime(B)$ (where the field compositum of $\widehat F$ and $k^\prime$ is taken in an algebraic closure of $k^\prime(B)$). 

The Galois closure $\widehat F/k(B)$ is not $k$-regular in general. Its constant extension $\widehat F\cap \overline k$ is called the {\it constant extension in the Galois closure} of $F/k(B)$ and denoted by $\widehat k_F$. By definition, $\widehat F/\widehat k_F(B)$ is $\widehat k_F$-regular; hence $[\widehat F:\widehat k_F(B)] = [\widehat F k^\prime: k^{\prime}(B)]$ for every overfield $k^\prime$ of $\widehat k_F$.
Consequently the Galois groups $\Gal(\widehat F k^{\prime}/k^{\prime}(B))$ with $k^\prime \supset  \widehat k_F$ are all equal to the same group, called the {\it monodromy group} of the extension $F/k(B)$ and denoted by $G$. 

Note that 
$\widehat k_F \subset k^{\sep}$, hence $\widehat k_F  = \widehat F\cap k^{\sep}$. Furthermore the extension $\widehat k_F/k$ is Galois and $\Gal(\widehat k_F/k) \subset {\rm Nor}_{S_d}(G)/G$, where $d=[F:k(B)]$ and $G$ is viewed as a subgroup of $S_d$ via the {\it monodromy action} $G\hookrightarrow S_d$ of $G$ on the $k(B)$-embeddings $F \hookrightarrow\widehat F$ \cite[Proposition~2.3]{DeDo1}.
\vskip 1mm

We say that a finite $k$-regular extension $F/k(B)$ is {\it geometrically Galois}  if 
the field extension $F\overline k/\overline k(B)$ is Galois. This generalizes the definition
given in the introduction for $B=\Pp^1_k$. Also note that the condition is equivalent to 
$Fk^{\sep}/k^{\sep}(B)$ being Galois. Indeed, if $F\overline k/\overline k(B)$ is Galois, 
then $\widehat F \overline k= F\overline k$, and it follows from the equalities
\[[\widehat F k^{\sep}:k^{\sep}(B)]= [\widehat F \overline k:\overline k(B)] =  [F \overline k:\overline k(B)] = [F k^{\sep}:k^{\sep}(B)]\]
that $\widehat F k^{\sep} = F k^{\sep}$, and so that $F k^{\sep}/k^{\sep}(B)$ is Galois. The converse  is clear; i.e., if $F k^{\sep}/k^{\sep}(B)$ is Galois then $F\overline k/\overline k(B)$ is Galois.
\vskip 1mm

If $F/k(B)$ is geometrically Galois, then 
the Galois group $\Gal(F\overline k/\overline k(B))$ is the monodromy group $G$ of $F/k(B)$, of order $d=[F:k(B)]$. For every field $k^\prime \supset \widehat k_F$, since $[Fk^\prime:k^\prime(B)] = d$ and $[\widehat{F}k^\prime:k^\prime(B)] =  |G|$, it follows that $\widehat{F}k^\prime= Fk^\prime$ and hence $Fk^\prime/k^\prime(B)$ is Galois.  Note that if $F/k(B)$ is not itself Galois, then the field extension $\widehat k_F/k$ must therefore be non-trivial, and $\widehat F/k(B)$ then cannot be $k$-regular. (See also Remark~\ref{rem:constant-extension}(\ref{constant_extension}).)
Furthermore, $\Gal(\widehat k_F/k) \subset {\rm Aut}(G)$; viz., the monodromy action $G\hookrightarrow S_d$ is the regular representation, and ${\rm Nor}_{S_d}(G)/G$ is isomorphic to ${\rm Aut}(G)$.  (See the proof of \cite[Proposition 3.1]{DeDo1}; see also \cite[Theorem~3.2]{Has15}.)

\begin{proposition} \label{prop:geom vs pot} 
Let $k$ be a field.
\renewcommand{\theenumi}{\alph{enumi}}
\renewcommand{\labelenumi}{(\alph{enumi})}
\begin{enumerate}
\item \label{geom Gal pre}
Every geometrically Galois extension $F/k(T)$ is  pre-Galois: more precisely, for every Galois extension $k^\prime/k$ with $k^\prime \supset \widehat k_F$, the extension $F/k(T)$ is pre-Galois
over $k^{\prime}(T)$, and the corresponding pre-Galois group is the monodromy group $\Gal(F\overline k/\overline k(T))$.
\item \label{pre not geom}
The converse of the first assertion in~(\ref{geom Gal pre}) does not hold: there are $k$-regular pre-Galois extensions of $k(T)$ that are not geometrically Galois.
\item \label{some pre are geom}
The following partial converse of~(\ref{geom Gal pre}) holds: 
Consider a $k$-regular field extension $F/k(T)$ such that there exists a Galois extension $L/k(T)$ such that $FL/L$ is Galois and the extensions $F\overline k/\overline k(T)$ and $L\overline k/\overline k(T)$ have no common branch point (and so $F\overline k/\overline k(T)$ and $L\overline k/\overline k(T)$ are linearly disjoint).  Then
the extension $F/k(T)$ is geometrically Galois. 
\end{enumerate}
\end{proposition}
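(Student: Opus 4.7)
The plan is to dispose of the three parts separately. Part~(\ref{geom Gal pre}) should follow directly from the definitions: if $k'/k$ is Galois with $k'\supset \widehat k_F$, then $k$-regularity of $F/k(T)$ makes $F$ and $k'(T)$ linearly disjoint over $k(T)$, so $F\otimes_{k(T)}k'(T)=Fk'(T)$ is a field of degree $d=[F:k(T)]$ over $k'(T)$. The discussion preceding the proposition shows that $Fk'(T)/k'(T)$ is Galois with group equal to the monodromy group $G=\Gal(F\overline k/\overline k(T))$ (of order $d$ because $F/k(T)$ is geometrically Galois). Since $k'(T)/k(T)$ is Galois, this is exactly the pre-Galois condition over $k'(T)$ with group $G$.

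For part~(\ref{pre not geom}) my plan is to exhibit an explicit example. A cubic $F=\Qq(T)[Y]/(f(Y))$ whose Galois closure has group $S_3$ and whose discriminant remains a non-square in $\overline\Qq(T)$ works; concretely $f(Y)=Y^3+TY+T$, with discriminant $-T^2(4T+27)$, so that the monodromy group is all of $S_3$. As its order $6$ differs from $3=[F:\Qq(T)]$, the extension is not geometrically Galois. On the other hand $\Gal(\widehat F/\Qq(T))=S_3$ has the normal cyclic subgroup $C_3$ as a normal complement of $\Gal(\widehat F/F)\simeq C_2$, so by Corollary~\ref{prop:potentiallyGoG_characterization_cor}(\ref{pot cor SZ}) the extension $F/\Qq(T)$ is pre-Galois with group $C_3$.

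Part~(\ref{some pre are geom}) is the substantive content. After base changing everything to $\overline k$ (reducing to the case that $L\overline k$ is a field by enlarging $L$ inside a fixed algebraic closure of $k(T)$, which preserves all hypotheses), write $F'=F\overline k$, $L'=L\overline k$ and $\widehat F'=\widehat F\overline k$. Since $L'\supset \overline k(T)$, every $L'$-embedding of $F'L'$ into an algebraic closure restricts to an $\overline k(T)$-embedding of $F'$, whose image lies in $\widehat F'$; hence the Galois closure of $F'L'/L'$ is $\widehat F'L'$, and the hypothesis that $F'L'/L'$ is Galois delivers the crucial identity $\widehat F' L'=F'L'$. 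Next, the intersection $\widehat F'\cap L'$ is Galois over $\overline k(T)$ and has branch locus contained in the branch loci of both $\widehat F'/\overline k(T)$ and $L'/\overline k(T)$, hence in $D_F\cap D_L=\emptyset$; since $\Pp^1_{\overline k}$ is simply connected in every characteristic, this intersection must equal $\overline k(T)$. Thus $\widehat F'$ and $L'$ are linearly disjoint over $\overline k(T)$, so $[\widehat F'L':L']=[\widehat F':\overline k(T)]=|G|$ whereas $[F'L':L']=d$, forcing $|G|=d$; i.e., $F'/\overline k(T)$ is Galois.

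The step I expect to require most care is the reduction of the arithmetic hypothesis ``$FL/L$ Galois'' to its geometric avatar ``$F'L'/L'$ Galois'' when $L/k(T)$ is not $k$-regular (so that $L\cdot\overline k$ may decompose); this is handled by replacing $L$ by its compositum with $\overline k$ inside a fixed algebraic closure and checking that the branch-locus and Galois-closure hypotheses are preserved. Once this reduction is in place, the argument is driven by just two ingredients: the Galois-closure identification $\widehat F'L'=F'L'$ and the simply-connectedness of $\Pp^1_{\overline k}$.
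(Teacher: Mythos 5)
Your part~(\ref{geom Gal pre}) is in substance the paper's own argument: both rest on the observation, recorded just before the proposition, that $Fk'/k'(T)$ is Galois with group the monodromy group $G$ as soon as $k'\supseteq\widehat k_F$, plus linear disjointness coming from $k$-regularity. For part~(\ref{pre not geom}) you take a genuinely different route: the paper works over $k=\Cc$, invoking the inverse Galois problem over $\Cc(T)$ to produce $N/\Cc(T)$ with group $G\rtimes H$, $H$ non-normal, and sets $F=N^H$; your explicit cubic $Y^3+TY+T$ over $\Qq(T)$ (irreducible by Eisenstein at $T$, with discriminant $-T^2(4T+27)$ a non-square in $\overline\Qq(T)$, hence geometric monodromy $S_3$ of order $6\ne 3$, while $C_3\trianglelefteq S_3$ is a normal complement of $\Gal(\widehat F/F)$) is an equally valid witness, and --- like the paper's --- it is an example over one particular field, which is all the statement is being read as asserting. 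For part~(\ref{some pre are geom}) you use the same two ingredients as the paper, namely disjointness of the branch loci and the triviality of unramified covers of $\Pp^1_{\overline k}$, but you organize them differently: the paper first shrinks $L$ to the minimal $L_0\subseteq\widehat F$ with $FL_0=\widehat F$ (Remark~\ref{rem:post-thm}(\ref{corresp rk b})) and then notes that the branch locus of $L_0\overline k/\overline k(T)$ is simultaneously contained in and disjoint from that of $F\overline k/\overline k(T)$, whereas you work with the intersection $\widehat F\overline k\cap L\overline k$ directly. Both routes reach the conclusion.

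One step in your part~(\ref{some pre are geom}) needs repair as written. When you assert ``hence the Galois closure of $F'L'/L'$ is $\widehat F'L'$,'' your embedding argument has only established the containment of that Galois closure \emph{in} $\widehat F'L'$; combined with ``$F'L'/L'$ is Galois'' this yields only the trivial inclusion $F'L'\subseteq\widehat F'L'$, not the identity $\widehat F'L'=F'L'$ that drives the rest of the proof. The missing containment $\widehat F'\subseteq F'L'$ requires knowing that $F'$ and $L'$ are linearly disjoint over $\overline k(T)$ with $F'L'/L'$ Galois --- i.e.\ exactly the hypothesis of Lemma~\ref{la:pp_conditions}(\ref{sep}) --- or, alternatively, that $\widehat F'\cap L'=\overline k(T)$, which makes the $L'$-conjugates of $F'$ coincide with its $\overline k(T)$-conjugates. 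Since you prove $\widehat F'\cap L'=\overline k(T)$ in the very next sentence by the branch-locus argument, this is an ordering defect rather than a missing idea: either run that argument first, or simply cite Lemma~\ref{la:pp_conditions}(\ref{sep}) together with the linear disjointness already granted in the hypothesis.
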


Part~(\ref{pre not geom}) of this result shows that the answer to Question~\ref{always_pp}(\ref{preGal geomGal}) is in general ``no''.

\begin{proof}
Part~(\ref{geom Gal pre}) 
follows from the above observation that if $k^\prime \supset \widehat k_F$ then $F k^\prime/k^\prime(T)$ is Galois.

For part~(\ref{pre not geom}), take a Galois extension $N/\Cc(T)$ of group ${\mathcal G} = G\rtimes H$ with $H$ not normal in ${\mathcal G}$ acting on $G$ (such an extension exists as the Inverse Galois Problem is solved over $\Cc(T)$). For $F=N^H$, $F/\Cc(T)$ is not Galois and not geometrically Galois 
either ($\Cc$ is algebraically closed) but is  pre-Galois (Corollary \ref{prop:potentiallyGoG_characterization_cor}(\ref{pot cor SZ})).

For part~(\ref{some pre are geom}), by Remark \ref{rem:post-thm}(\ref{corresp rk b}), 
one may take $L/k(T)$ so that $FL=\widehat F$. But then each branch point of $L\overline k/\overline k(T)$ is a branch point of $\widehat F\overline k /\overline k(T)$ and so is a branch point of $F\overline k/\overline k(T)$. Therefore $L\overline k/\overline k(T)$ is unramified eve\-ry\-where, which gives $L\overline k = \overline k(T)$
hence $L\subset \overline k(T)$. Finally we have $[F\overline k: \overline k(T)] = [F:k(T)]$ since $F/k(T)$ is $k$-regular, and $F\overline k/\overline k(T)$ is Galois since $FL/L$ is already Galois, so $F/k(T)$ is indeed geometrically Galois.
\end{proof}

Although a geometrically Galois extension of $k(T)$ has a unique geometric Galois group (by definition), it can have more than one group when viewed as a pre-Galois extension of $k(T)$, as the following geometric analog of Example \ref{ex:two-normal-complements} shows.

\begin{example} \label{geom also pre other gp}
Let $k=\Qq$ and $F=\Qq(T^{1/4})$.  The extension $F/\Qq(T)$ is geometrically Galois with group $\Zz/4\Zz$, since $F/\Qq(T)$ becomes Galois with group $\Zz/4\Zz$ over $\Qq(i)(T)$; and hence it is also pre-Galois with group $\Zz/4\Zz$.  The Galois group $\Gamma$ of its Galois closure $\widehat F/\Qq(T)$ is isomorphic to $D_8$, the dihedral group of order $8$.  The $4$-cyclic subgroup $G=\Gal(F(i)/\Qq(i)(T)) \subset \Gamma$ is a normal complement to the $2$-cyclic subgroup $\Gal(\widehat F/F)$; but the latter group also has a normal complement $G'\cong(\Zz/2\Zz)^2$ in $\Gamma$
(see Example~\ref{ex:two-normal-complements}).  
Thus $F/k(T)$ is also pre-Galois over $L=\widehat F^{G'}$, with group $(\Zz/2\Zz)^2$.   Note that $L$ cannot be of the form $k'(T)$: otherwise 
$G' = \Gal(Fk'/k'(T))$ would 
be $G$, as it contains $\Gal(F\overline k/\overline k(T))=G$ and has the same order.  (This can also be seen explicitly.)  
\end{example}

\begin{remark}
The geometric analog of the situation considered in Section~\ref{subsec: Hopf} is simpler than the one there:  An extension of $k(T)$ is geometrically Galois if and only if it is a torsor and it is regular.  For the former implication, every geometrically Galois extension is pre-Galois (by Proposition~\ref{prop:geom vs pot}(\ref{geom Gal pre})) and hence a torsor (by the comments in Section~\ref{subsec: Hopf}); and it was noted in the Introduction that it is regular.  
Conversely, suppose that an extension of $F/k(T)$ is a $\mu$-torsor for some finite group scheme $\mu$, and that it is regular.  Then $F$ is linearly disjoint from $\overline k$ over $k(T)$, and so $F \otimes_k \overline k$ is a field and a $\mu_{\overline k}$-torsor over $\overline k(T)$.  But since $\overline k$ is algebraically closed, $\mu_{\overline k}$ is a constant finite group $G$.  Thus $F \otimes_k \overline k$ is a Galois field extension of $\overline k(T)$ having group $G$; and so $F/k(T)$ is geometrically Galois.  
\end{remark}

\subsection{Specialization} \label{ssec:specialization} 
In this subsection, we extend classical results about specializing Galois function field extension to our pre-Galois context.  In particular, we prove Proposition \ref{intro:IGP-implications} over Hilbertian fields.
As in Section~\ref{ssec:geometrically_Galois}, $k$ is an arbitrary field and $B$ is a regular projective geometrically irreducible 
$k$-variety $B$ of positive dimension. 
 
Given a finite $k$-regular extension $F/k(B)$ and a point $t_0\in B(k)$ not in the branch divisor $D$, the specialization $F_{t_0}/k$ of $F/k(B)$ at $t_0$ is defined as follows: if ${\rm Spec}(V)$ is an affine neighborhood (for the Zariski topology) of $t_0$, which corresponds to some maximal ideal ${\frak p}_{t_0}$ such that $V/{\frak p}_{t_0}=k$ and $U$ is the integral closure of $V$ in $F$, then $F_{t_0}$ is the $k$-\'etale algebra $U\otimes_VV/{\frak p}_{t_0}$ (it does not depend on the affine subset ${\rm Spec}(V)$ and is defined up to $k(B)$-isomorphism). 
If $F/k(B)$ corresponds to the fundamental group representation  $\phi:\pi_1(B\smallsetminus D,t)_k \rightarrow S_d$ and $s_{t_0}: \Gabs(k) \rightarrow  \pi_1(B\smallsetminus D,t)_k$ is the section associated to $t_0$, $F_{t_0}/k$ is the \'etale algebra associated with the map $\phi \circ s_{t_0}:\Gabs(k) \rightarrow S_d$. The $k$-algebra $F_{t_0}$ is a field if and only if $\phi \circ s_{t_0}(\Gabs(k))$ is a transitive subgroup of $S_d$. 

\begin{proposition}  \label{prop:specialization} 
Let $B$ be a regular projective geometrically irreducible variety of positive dimension over a field $k$.
Let $F/k(B)$ be a separable degree $d$ extension and $\widehat F/k(B)$ be its Galois closure.
\begin{enumerate}
\renewcommand{\theenumi}{\alph{enumi}}
\renewcommand{\labelenumi}{(\alph{enumi})}
\item \label{pot Gal specn}
Assume $F/k(B)$ is potentially Galois of group $G$. For every \hbox{point $t_0$} in $B(k)\smallsetminus D$  such that 
$[(\widehat F)_{t_0} : k] = [\widehat F:k(B)]$,  the extension $F_{t_0}/k$ is a degree $d$ field extension 
that is potentially Galois of group $G$. Furthermore $F_{t_0}/k$ is  pre-Galois if $F/k(T)$ is.
\item \label{geom Gal specn}
 Assume $F/k(B)$ is geometrically Galois of group $G$ and let $\widehat k_F/k$ be the constant extension in the Galois closure $\widehat F/k(B)$ of $F/k(B)$. For every point $t_0\in B(k)\smallsetminus D$  such that  $F_{t_0}$ is a field and $[F_{t_0}\widehat k_F : \widehat k_F] = d$, we have the following, where $\widehat{F_{t_0}}/k$ is the Galois closure  of $F_{t_0}/k$:
\begin{enumerate}
[label=(\roman*)]
\renewcommand{\theenumi}{\roman{enumi}}
\item \label{specn Gal cl}
$\widehat{F_{t_0}} = F_{t_0} \widehat k_F = (\widehat F)_{t_0}$.
\item \label{specn pre Gal}
the extension $F_{t_0}/k$ is  pre-Galois over $\widehat k_F$
of group $G$. 
\end{enumerate}
\end{enumerate}
\end{proposition}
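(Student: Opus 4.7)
The plan is to use the standard ``good specialization'' theory to identify the Galois groups and intermediate-field lattices of $\widehat F/k(B)$ and $(\widehat F)_{t_0}/k$, and then apply the complement characterization of potentially Galois and pre-Galois extensions from Corollary~\ref{prop:potentiallyGoG_characterization_cor}(\ref{pot cor complem}).

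For part~(\ref{pot Gal specn}), I would set $\widehat\gp = \Gal(\widehat F/k(B))$ and $\gp_F = \Gal(\widehat F/F)$. The section $s_{t_0}:\Gal(k)\to \pi_1(B\smallsetminus D,t)_k$ composed with the representation $\pi_1(B\smallsetminus D,t)_k\to \widehat\gp$ of the Galois cover $\widehat F/k(B)$ gives a map $\psi_{t_0}:\Gal(k)\to \widehat\gp$, and the hypothesis $[(\widehat F)_{t_0}:k]=|\widehat\gp|$ forces $\psi_{t_0}$ to be surjective. This exhibits $(\widehat F)_{t_0}/k$ as a Galois field extension whose Galois group is canonically identified with $\widehat\gp$ via $\psi_{t_0}$. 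Under this identification, for any intermediate $k(B)\subset E\subset \widehat F$, the specialization $E_{t_0}$ is the fixed subfield of $\Gal(\widehat F/E)\subset \widehat\gp$, since both sides arise from the action of $\widehat\gp$ on $\widehat\gp/\Gal(\widehat F/E)$ pulled back along $\psi_{t_0}$. Taking $E=F$, I conclude that $F_{t_0}$ is a field with $[F_{t_0}:k]=d$ and with Galois closure $\widehat{F_{t_0}}=(\widehat F)_{t_0}$. By Corollary~\ref{prop:potentiallyGoG_characterization_cor}(\ref{pot cor complem}), the group $G$ being a potential Galois group (resp.\ pre-Galois group) of $F/k(B)$ means that $G$ is a complement (resp.\ normal complement) of $\gp_F$ in $\widehat\gp$; the identification carries $\gp_F$ to $\Gal(\widehat{F_{t_0}}/F_{t_0})$ and preserves (normal) complements, so the same corollary applied to $F_{t_0}/k$ yields that $F_{t_0}/k$ is potentially Galois (resp.\ pre-Galois) of group $G$.

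For part~(\ref{geom Gal specn}), I would first observe that $\widehat F = F\widehat k_F$. Indeed $F$ is $k$-regular and $\widehat k_F\subset k^{\sep}$, so $F$ and $\widehat k_F$ are linearly disjoint over $k$, giving $[F\widehat k_F:\widehat k_F(B)]=d$; combined with $F\widehat k_F\subset \widehat F$ and $[\widehat F:\widehat k_F(B)]=d$ (the defining property of the geometric Galois situation), this yields the equality. Specializing at $t_0$ then gives $(\widehat F)_{t_0}=F_{t_0}\otimes_k \widehat k_F$ as $k$-étale algebras. The assumptions that $F_{t_0}$ is a field and $[F_{t_0}\widehat k_F:\widehat k_F]=d$ force, via $d=[F_{t_0}\widehat k_F:\widehat k_F]\le [F_{t_0}:k]\le d$, the equality $[F_{t_0}:k]=d$ and the linear disjointness of $F_{t_0}$ and $\widehat k_F$ over $k$; hence $F_{t_0}\otimes_k \widehat k_F=F_{t_0}\widehat k_F$ is a field of degree $d\cdot[\widehat k_F:k]=[\widehat F:k(B)]$. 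Thus the hypothesis of part~(\ref{pot Gal specn}) is satisfied, and the proof of that part gives $\widehat{F_{t_0}}=(\widehat F)_{t_0}=F_{t_0}\widehat k_F$, establishing~(\ref{specn Gal cl}). For~(\ref{specn pre Gal}), I would trace the subgroup $G=\Gal(\widehat F/\widehat k_F(B))\subset \widehat\gp$ through the identification $\widehat\gp\cong \Gal(\widehat{F_{t_0}}/k)$: since $\widehat k_F(B)$ specializes at $t_0$ to $\widehat k_F$, the compatibility of the Galois correspondence with specialization (from part~(\ref{pot Gal specn})) sends $G$ to $\Gal(\widehat{F_{t_0}}/\widehat k_F)$, so $F_{t_0}\widehat k_F/\widehat k_F$ is Galois with group $G$. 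Together with the established linear disjointness of $F_{t_0}$ and $\widehat k_F$ over $k$ and the fact that $\widehat k_F/k$ is Galois, this is precisely the statement that $F_{t_0}/k$ is pre-Galois over $\widehat k_F$ of group $G$.

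The main technical point is the compatibility of the Galois correspondence with specialization: under the iso $\widehat\gp\cong \Gal((\widehat F)_{t_0}/k)$ provided by good specialization, an intermediate field $k(B)\subset E\subset \widehat F$ specializes to the fixed subfield of $\Gal(\widehat F/E)$ in $(\widehat F)_{t_0}$. This is a standard consequence of the fundamental-group/section formalism recalled at the start of Section~\ref{ssec:specialization}, but it is the one place where one must go beyond formal manipulation of Galois-theoretic data; once it is in hand, both parts~(\ref{pot Gal specn}) and~(\ref{geom Gal specn}) reduce to bookkeeping with subgroups and complements.
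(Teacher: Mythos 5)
Your proof is correct and takes essentially the same route as the paper's: both rest on the good-specialization identification of $\Gal((\widehat F)_{t_0}/k)$ with $\Gal(\widehat F/k(B))$, carrying $\Gal(\widehat F/F)$ to $\Gal((\widehat F)_{t_0}/F_{t_0})$, and then transfer the (normal) complement / ZS-product structure, with part~(\ref{geom Gal specn}) reduced to part~(\ref{pot Gal specn}) via $\widehat F = F\widehat k_F$ and the linear disjointness forced by $[F_{t_0}\widehat k_F:\widehat k_F]=d$. The only cosmetic difference is that the paper applies Proposition~\ref{ZS_equiv} directly to $N=(\widehat F)_{t_0}$ in part~(\ref{pot Gal specn}), so it never needs to identify $\widehat{F_{t_0}}$ there, whereas your use of Corollary~\ref{prop:potentiallyGoG_characterization_cor}(\ref{pot cor complem}) requires first checking $\widehat{F_{t_0}}=(\widehat F)_{t_0}$ -- which does follow from your identification, since the normal core of $\Gal(\widehat F/F)$ in $\Gal(\widehat F/k(B))$ is trivial.
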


\begin{proof} (\ref{pot Gal specn}): By Theorem \ref{prop:potentiallyGoG_characterization}, there is a sub-extension $L/k(B)$ of $\widehat F/k(B)$ such that $\widehat F= FL$ and $FL/L$ is Galois with group $G$; and the Galois group $\Gal(\widehat F/k(B))$ is the ZS-product of $G$ and $\Gal(\widehat F/F)$. 

Let $t_0\in B\smallsetminus D$ such that $[(\widehat F)_{t_0} : k] = [\widehat F:k(B)]$.
Then 
\begin{itemize}
\item $(\widehat F)_{t_0}/k$ is a Galois field extension of group $\Gal(\widehat F/k(B))$,
\item $F_{t_0}/k$ is an extension of degree $d$,
\item $(\widehat F)_{t_0}/F_{t_0}$ is a Galois field extension of group $\Gal(\widehat F/F)$.
\end{itemize}
It follows that $(\widehat F)_{t_0}/k$ is a Galois field extension containing $F_{t_0}/k$ and its Galois group
is the ZS-product of $G$ and $\Gal((\widehat F)_{t_0}/F_{t_0})$.   By 
Proposition \ref{ZS_equiv}, 
$F_{t_0}/k$ is potentially Galois over the fixed field $((\widehat F)_{t_0})^G$. 
If $F/k(B)$ is further assumed to be  pre-Galois, the original extension $L/k(B)$ may be assumed to be Galois and then $G$ is normal in $\Gal(\widehat F/k(B))$. Hence the extension $((\widehat F)_{t_0})^G/k$ is Galois as well and $F_{t_0}/k$ is  pre-Galois of group $G$.

\smallskip

(\ref{geom Gal specn}): It follows from $F\widehat k_F= \widehat F$ that $F_{t_0} \widehat k_F \subset (\widehat F)_{t_0}$. As $[F_{t_0}\widehat k_F : \widehat k_F] = d$ and $\Gal((\widehat F)_{t_0}/\widehat k_F) \subset  \Gal(\widehat F/\widehat k_F(T))=G$, we have $F_{t_0} \widehat k_F = (\widehat F)_{t_0}$ and $F_{t_0}\widehat k_F/\widehat k_F$ is Galois of group $G$, {\it i.e.} assertion (\ref{geom Gal specn})\ref{specn pre Gal} holds. It remains to prove $\widehat{F_{t_0}} = (\widehat F)_{t_0}$, which we do below. 

\smallskip

It follows from $F_{t_0} \subset (\widehat F)_{t_0}$ that $\widehat{F_{t_0}} \subset (\widehat F)_{t_0}$. For the other containment $(\widehat F)_{t_0}\subset \widehat{F_{t_0}}$, since $(\widehat F)_{t_0}=F_{t_0}\widehat k_F$, we have to prove that $\widehat k_F \subset \widehat{F_{t_0}}$.
Denote by $\phi:\pi_1(B\smallsetminus D,t)_k \rightarrow S_d$ the fundamental group representation of $F/k(T)$ and by $s_{t_0}: \Gabs(k) \rightarrow  \pi_1(B\smallsetminus D,t)_k$ the section associated to $t_0$. 
Proving $\widehat k_F \subset \widehat{F_{t_0}}$ amounts to showing that ${\rm ker}(\phi \circ s_{t_0}) \subset \Gabs({\widehat k_F})$. Let $\tau \in {\rm ker}(\phi \circ s_{t_0})$, {\it i.e.} $s_{t_0}(\tau)\in {\rm ker}(\phi)$. Thus $s_{t_0}(\tau)$
fixes $\widehat F =F\widehat k_F$ and in particular fixes $\widehat k_F$. Hence $\tau \in \Gabs({\widehat k_F})$, which finishes the proof. \end{proof}

Along the lines of the implication ({Regular} IGP/$k$) $\Rightarrow$ (IGP/$k$), we have the following result for any Hilbertian field (e.g., a number field), via specialization:

\begin{proposition} \label{intro:IGP-implications}
Let $k$ be a Hilbertian field and let $G$ be a finite group.  If $G$ is a pre-Galois group over $k(T)$, then $G$ is a pre-Galois group over $k$. 
\end{proposition}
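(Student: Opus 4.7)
\textbf{Proof plan for Proposition \ref{intro:IGP-implications}.} The plan is to mimic the classical deduction of (IGP/$k$) from (RIGP/$k$): take a witness $F/k(T)$ of the pre-Galois property over $k(T)$, then use Hilbert's irreducibility theorem to specialize $T$ to a value $t_0 \in k$ so that the specialization $F_{t_0}/k$ remains pre-Galois of group $G$, the key input being Proposition~\ref{prop:specialization}(\ref{pot Gal specn}).

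Concretely, suppose $F/k(T)$ is pre-Galois of group $G$, with Galois closure $\widehat F/k(T)$. By Corollary~\ref{prop:potentiallyGoG_characterization_cor}(\ref{pot cor SZ}), the group $\Gamma := \Gal(\widehat F/k(T))$ decomposes as a semi-direct product $G \rtimes A$ with $A = \Gal(\widehat F/F)$. Pick a primitive element $y$ of $\widehat F/k(T)$ with minimal polynomial $p(T,Y) \in k(T)[Y]$ of degree $|\Gamma|$; it is irreducible in $k(T)[Y]$ and separable in $Y$. Because $\widehat F/k(T)$ is Galois, the remaining roots of $p(T,Y)$ lie in $\widehat F = k(T)[y]$ and can be expressed as $k(T)$-polynomial expressions in $y$.

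Since $k$ is Hilbertian, there are infinitely many $t_0 \in k$ for which $p(t_0,Y) \in k[Y]$ is irreducible. Throwing away the branch divisor of $\widehat F/k(T)$ and the finitely many values of $t_0$ at which the coefficients of the aforementioned polynomial expressions have poles, we may further assume that $t_0$ avoids all these bad points. For such a $t_0$, the specialization $\widehat F_{t_0}$ is a field of degree $|\Gamma|$ over $k$, and the factorization $p(T,Y) = \prod_i (Y - y_i)$ inside $\widehat F[Y]$ specializes to a factorization of $p(t_0,Y)$ into linear factors over $\widehat F_{t_0}$. Hence $\widehat F_{t_0}/k$ is a Galois field extension of group $\Gamma$, and in particular $[\widehat F_{t_0}:k] = [\widehat F:k(T)]$. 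The hypotheses of Proposition~\ref{prop:specialization}(\ref{pot Gal specn}) are then satisfied, and its ``pre-Galois'' clause gives that $F_{t_0}/k$ is pre-Galois of group $G$, proving that $G$ is a pre-Galois group over $k$.

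I do not foresee a serious obstacle. The only point worth keeping an eye on is that $F/k(T)$ need not be $k$-regular, so $\widehat F/k(T)$ may have a nontrivial constant field extension; but this is harmless because Hilbert irreducibility is applied directly to the polynomial $p(T,Y) \in k(T)[Y]$ and Proposition~\ref{prop:specialization}(\ref{pot Gal specn}) is stated without any regularity hypothesis on $F$.
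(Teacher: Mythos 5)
Your proposal is correct and follows essentially the same route as the paper: take a pre-Galois extension $F/k(T)$ of group $G$, use Hilbertianity to find $t_0\in k$ with $[(\widehat F)_{t_0}:k]=[\widehat F:k(T)]$, and invoke Proposition~\ref{prop:specialization}(\ref{pot Gal specn}) to conclude that the specialization $F_{t_0}/k$ is pre-Galois of group $G$. The paper simply states that the relevant set of $t_0$ contains a separable Hilbert subset, whereas you unfold that step via a primitive element of $\widehat F/k(T)$; both observations, including your remark that no regularity of $F/k(T)$ is needed, are consistent with the paper's argument.
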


\begin{proof}
By hypothesis, there exists a pre-Galois extension $F/k(T)$ of group $G$. For $B=\Pp^1$, the set of $t_0\in k$ such that $[(\widehat F)_{t_0} : k] = [\widehat F:k(B)]$ contains a separable Hilbert subset of $k$, which is infinite since $k$ is Hilbertian. Using Proposition~\ref{prop:specialization}(\ref{pot Gal specn}), we obtain that $G$ is a pre-Galois group over $k$, namely the pre-Galois group 
of some specialization $F_{t_0}/k$ of $F/k(T)$.
\end{proof}

Combining Proposition~\ref{intro:IGP-implications} with Proposition~\ref{prop:geom vs pot}(\ref{geom Gal pre}), we have implications
\vskip 1mm
\centerline{\rm (Geometric IGP/$k$) $\Rightarrow$ (Pre-IGP/$k(T)$) $\Rightarrow$ 
(Pre-IGP/$k$)}
\vskip 1mm

\noindent for $k$ Hilbertian.
These problems offer a natural graduation of inverse Galois theory. 
Note that the implication (Geometric IGP/$k$) $\Rightarrow$ 
(Pre-IGP/$k$) can also be seen directly in this situation by using Proposition~\ref{prop:specialization}(\ref{geom Gal specn}).  Namely, consider the set of $t_0\in k$ such that  $F_{t_0}$ is a field and $[F_{t_0}\widehat k_F : \widehat k_F] = d$.  This set contains a separable Hilbert subset of $k$, which is necessarily infinite if $k$ is Hilbertian.
\vskip 2mm

\subsection{Lifting Problems} \label{ssec:lifting_problems}
 We turn to Question~\ref{geomBB}, which is a weakening of the arithmetic lifting problem (or Beckmann-Black problem), in which we ask for a geometrically Galois extension, rather than a regular Galois extension, that lifts a given Galois extension of the field. Theorem~\ref{geom BB thm} and Corollary~\ref{BB ample} provide some answers. 

Recall that  $B$ is a regular projective geometrically irreducible $k$-variety.  

The definition of the twisted extension $\widetilde F^E/k(B)$ appearing in the statement below is recalled in the proof, together with 
its main properties. For more on twisting, we refer to \cite{DeLe2} or in \cite{DeBB} (where a form of Proposition \ref{prop:constant-extension-prescribed}(\ref{specn et alg}) already appears).

\begin{proposition} \label{prop:constant-extension-prescribed}
Let  $G$ be a group with trivial center and $F/k(B)$ be a $k$-regular Galois \hbox{extension of group} $G$ 
such that the corresponding cover of $B$ has a $k$-rational point above some point $t_0\in B(k)$ not in the branch divisor.
Let $E/k$ be a Galois extension of group $H$ isomorphic to a subgroup of $G$. Consider the  extension $\widetilde F^E/k(B)$ obtained by twisting $F/k(B)$ by $E/k$.
\begin{enumerate}
\renewcommand{\theenumi}{\alph{enumi}}
\renewcommand{\labelenumi}{(\alph{enumi})}
\item \label{const ext geom Gal}
Then the extension $\widetilde F^E/k(B)$  is geometrically Galois of group $G$.  Its Galois closure is the extension $FE/k(B)$, which has
Galois group  $G\times H$.
\item \label{const ext in Gal closure}
The constant extension in the Galois closure of $\widetilde F^E/k(B)$ is $E/k$. 
\item \label{specn et alg}
The specialization of $\widetilde F^E/k(B)$ at $t_0$ is the $k$-\'etale algebra corresponding to $(G:H)$ copies of $E/k$.
\end{enumerate}
\end{proposition}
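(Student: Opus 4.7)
The plan is to work inside the extension $FE/k(B)$, using that the twisted extension $\widetilde F^E/k(B)$ is by construction a sub-extension of $FE/k(B)$ that becomes isomorphic to $F/k(B)$ after base change to $E$ (one of the defining properties of the twist recalled in \cite{DeLe2}, \cite{DeBB}). This is the only property of twisting I would invoke; everything else is deduced from it together with the hypotheses.

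First I would observe that since $F/k(B)$ is $k$-regular Galois of group $G$ and $E/k$ is Galois of group $H$, we have $F\cap E=k$, so $FE/k(B)$ is Galois with group $G\times H$ and has constant extension exactly $E/k$ (giving (b) immediately, once we know $FE$ is the Galois closure of $\widetilde F^E$). Next I would extract from the defining property $\widetilde F^E\otimes_k\overline k\cong F\otimes_k\overline k$ that $\widetilde F^E/k(B)$ is $k$-regular of degree $|G|$ and that $\widetilde F^E\,\overline k/\overline k(B)$ is Galois of group $G$; this is the assertion that $\widetilde F^E/k(B)$ is geometrically Galois of group $G$. Then from the defining property after base change to $E$, namely $\widetilde F^E\otimes_k E\cong F\otimes_k E$, I would deduce $\widetilde F^E\cdot E = FE$, so the Galois closure $\widehat{\widetilde F^E}$ is contained in $FE$.

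The main obstacle—and the step where the hypothesis $Z(G)=1$ enters—is to show that $\widehat{\widetilde F^E}=FE$, i.e. that no proper sub-extension of $FE$ containing $\widetilde F^E$ is Galois over $k(B)$. Writing $\widetilde F^E = (FE)^\Delta$ for the order-$|H|$ subgroup $\Delta\subset G\times H$ coming from twisting (whose projection to the second factor is $H$ and whose intersection with $G\times 1$ is trivial, so that it is the graph of an isomorphism $H\to H\subset G$ given by the embedding used to twist), one must show that the normal core $\bigcap_{\sigma\in G\times H}\sigma\Delta\sigma^{-1}$ is trivial. Conjugation by $(g,1)$ for $g\in G$ replaces the graph element $(h,h)$ by $(ghg^{-1},h)$; requiring this to lie in $\Delta$ for every $g$ forces the first coordinate to equal $h$ independently of $g$, hence $h\in Z(G)=\{1\}$. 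Thus the core is trivial, yielding $\widehat{\widetilde F^E}=FE$ of group $G\times H$, which finishes (a) and, together with the constant-extension computation, completes (b).

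For (c), I would use that specialization commutes with tensor products and with passage to fixed fields under a subgroup of the Galois group. The $k$-rational point above $t_0$ makes the fiber of $F$ at $t_0$ a trivial $G$-torsor, so $F_{t_0}\cong k^{G}$ as a $k$-étale algebra with $G$ permuting coordinates by left multiplication and $\Gal(k)$ acting trivially; hence $(FE)_{t_0}\cong F_{t_0}\otimes_k E\cong E^{G}$. The specialization $(\widetilde F^E)_{t_0}$ is then the algebra of $\Delta$-invariants in $E^{G}$, where $(h,h)\in\Delta$ acts simultaneously by shifting coordinates via $h\in H\subset G$ and by the Galois action of $h$ on each $E$-entry. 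A tuple $(e_x)_{x\in G}$ is invariant iff $e_{hy}=h\cdot e_y$ for all $h\in H$, $y\in G$; choosing one coordinate per right $H$-coset freely in $E$ gives exactly $(G:H)$ copies of $E$, as claimed.
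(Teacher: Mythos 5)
Your proof is correct, but it takes a genuinely different route from the paper's. The paper works throughout with the explicit fundamental group representation of the twist, $\widetilde \phi^E(x\, s_{t_0}(\tau))(g) = \phi(x)\, g\, \varphi(\tau)^{-1}$: part (a) comes from computing $\ker(\widetilde\phi^E)=\ker(\phi)\cap\ker(\varphi)$ (this is where $Z(G)=\{1\}$ enters there), part (b) from identifying the induced map $\Gal(k)\to \Nor_{{\rm Perm}(G)}(G)/G$ with $\tau\mapsto (g\mapsto g\varphi(\tau)^{-1})$, and part (c) from reading off the stabilizers of that right-multiplication action. You instead invoke only the qualitative defining property $\widetilde F^E\otimes_k E\cong F\otimes_k E$, sit inside the Galois extension $FE/k(B)$ of group $G\times H$, identify $\widetilde F^E$ as the fixed field of the graph subgroup $\Delta$, and reduce everything to subgroup lattice computations: the hypothesis $Z(G)=\{1\}$ appears as the triviality of the normal core of $\Delta$ in $G\times H$, and the specialization is computed as the $\Delta$-invariants of $E^G$. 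The two uses of $Z(G)=\{1\}$ are of course the same computation in different clothing (the ${\rm Perm}(G)$-representation is exactly the action of $G\times H$ on $(G\times H)/\Delta$), but your packaging is more self-contained and makes the group-theoretic content transparent, at the cost of taking on faith slightly more about how the twist sits inside $FE$ (namely that $\Delta$ is the graph of the embedding $H\hookrightarrow G$ used to twist --- though, as you implicitly note, only the fact that $\Delta$ is the graph of \emph{some} injection $H\to G$ is needed for all three conclusions). The paper's explicit formula, by contrast, is what generalizes to situations without a rational point over $t_0$, where the specialized algebra is a genuinely twisted object. Minor points worth making explicit in a write-up: $E(B)/k(B)$ is everywhere unramified, so $t_0$ stays outside the branch divisor of $FE/k(B)$ and specialization at $t_0$ does commute with passage to $\Delta$-invariants; and $FE\cap\overline k=E$ requires the degree count $[FE:E(B)]=|G|=[F\overline k:\overline k(B)]$, which you have from $k$-regularity of $F$.
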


\begin{proof} 
One may assume $H\subset G$. Let  $\phi:\pi_1(B\smallsetminus D,t)_k \rightarrow G$ be  the fundamental group representation of $F/k(B)$ and $\varphi:\Gabs(k) \rightarrow G$ be a Galois representation of $E/k$. 
Write ${\rm Perm}(G)$ for the group of permutations of the set $G$. The twisted extension $\widetilde F^E/k(B)$ corresponds to the fundamental group representation $\widetilde \phi^E:\pi_1(B\smallsetminus D,t)_k \rightarrow {\rm Perm}(G)$ given as follows: for each element of $\pi_1(B\smallsetminus D,t)_k$ uniquely written $x \hskip 1pt s_{t_0}(\tau)$ (with $x\in \pi_1(B\smallsetminus D,t)_{k^{\hbox{\sevenrm sep}}}$, $\tau \in \Gabs(k)$ and $s_{t_0}: \Gabs(k) \rightarrow  \pi_1(B\smallsetminus D,t)_k$ the section associated to $t_0$), we have, for every $g\in G$,
\[\widetilde \phi^E(x \hskip 1pt s_{t_0}(\tau)) (g) = \phi(x \, s_{t_0}(\tau)) \, g \ \varphi(\tau)^{-1}= \phi(x)  \, g \, \varphi(\tau)^{-1}\]
(we have $\phi( s_{t_0}(\tau)) = 1$ as there is a $k$-rational point above $t_0$).

The extension $\widetilde F^E/k(B)$ becomes isomorphic to $F/k(B)$ after scalar extension to $k^\sep$ (as $\phi$ and $\widetilde \phi^E$ have the same restriction on $\pi_1({\Pp}^1\smallsetminus{\bf t})_{k^{\hbox{\sevenrm sep}}}$). The same is true {\it a fortiori} over $\overline k$, hence $\widetilde F^E/k(B)$ is geometrically Galois of group $G$. Furthermore, using that $Z(G)=\{1\}$, one easily checks that 
\[{\rm ker}(\widetilde \phi^E) = {\rm ker}(\phi) \cap {\rm ker}(\varphi),\]
which indeed shows that the Galois closure of $\widetilde F^E/k(B)$ is the extension $FE/k(T)$.
This proves (\ref{const ext geom Gal}).

The constant extension in the Galois closure of $\widetilde F^E/k(B)$ is given by the map $\Gabs(k) \rightarrow {\rm Nor}_{{\rm Perm}(G)}(G)/G$ induced on $\Gabs(k)$ by $\widetilde \phi^E$ (this is detailed in \cite[Section 2.8]{DeDo1}).
Here $G\hookrightarrow {\rm Perm}(G)$ is the left-regular representation of $G$. Then ${\rm Nor}_{{\rm Perm}(G)}(G)/G$
is  the image of $G$ via the right-regular representation of $G$. Taking into account that $Z(G)=\{1\}$, the map $\Gabs(k) \rightarrow {\rm Nor}_{{\rm Perm}(G)}(G)/G$ can be identified to the one sending each $\tau \in \Gabs(k)$ to the right multiplication map $g\mapsto g \cdot \varphi(\tau)^{-1}$. Its kernel is the same as the initial representation $\varphi: \Gabs(k) \rightarrow G$. This proves (\ref{const ext in Gal closure}).

Statement (\ref{specn et alg}) follows as well as the specialization of $\widetilde F^E/k(B)$ at $t_0$ corresponds to the map $\widetilde \phi^E\circ s_{t_0}: \Gabs(k)\rightarrow {\rm Nor}_{{\rm Perm}(G)}(G)$ which also sends each $\tau \in \Gabs(k)$ to the right multiplication $g\mapsto g \cdot \varphi(\tau)^{-1}$. The stabilizer of a given $g\in G$ is ${\rm ker}(\varphi)$ and the corresponding fixed field is~$E$.
\end{proof}

In particular, we obtain an affirmative answer to Question~\ref{geomBB} in the above situation:

\begin{theorem} \label{geom BB thm}
Let $G$ be a finite group with trivial center, and let $k$ be a field.  
Suppose there is a $k$-regular Galois extension $F/k(T)$ with Galois group $G$, whose corresponding cover $\pi:X \to\Pp^1_k$ has an unramified $k$-point $P$.
Then given a Galois field extension $E/k$ of group $H\subset G$, there exists a geometrically Galois extension $\mathcal E/k(T)$, with group $G$, that specializes to $E^{(G:H)}/k$ at $\pi(P) \in \Pp^1(k)$, and whose constant extension in the Galois closure is $E$.
\end{theorem}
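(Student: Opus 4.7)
The proof will be an essentially direct application of Proposition~\ref{prop:constant-extension-prescribed} in the case $B = \Pp^1_k$. The hypotheses of that proposition are all supplied: the center $Z(G)$ is trivial by assumption; $F/k(T)$ is a $k$-regular Galois extension of group $G$; the point $t_0 := \pi(P) \in \Pp^1(k)$ lies outside the branch locus (because $P$ is an unramified $k$-point of $X$); and $X$ has a $k$-rational point above $t_0$, namely $P$ itself. The given Galois extension $E/k$ has group $H$, which is (isomorphic to) a subgroup of $G$ by hypothesis.

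The plan, then, is as follows. First I will set $\mathcal E := \widetilde F^E/k(T)$, the twist of $F/k(T)$ by $E/k$ as constructed in the proof of Proposition~\ref{prop:constant-extension-prescribed}. Then I invoke the three conclusions of that proposition in turn. Part~(\ref{const ext geom Gal}) immediately says that $\mathcal E/k(T)$ is geometrically Galois of group $G$, so the first assertion is established. Part~(\ref{const ext in Gal closure}) says that the constant extension in the Galois closure of $\mathcal E/k(T)$ is $E/k$, giving the last assertion. Finally, part~(\ref{specn et alg}) identifies the specialization $\mathcal E_{t_0}/k$ with the $k$-étale algebra consisting of $(G:H)$ copies of $E/k$, which is precisely $E^{(G:H)}/k$ in the notation of the theorem.

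Since the three required conclusions are literally the three parts of Proposition~\ref{prop:constant-extension-prescribed}, there is no real obstacle: the entire content of the theorem is already contained in the twisting construction. The only thing one might double-check is the compatibility of the hypothesis ``$\pi$ has an unramified $k$-point $P$ above $t_0$'' with the hypothesis in the proposition phrased as ``the cover has a $k$-rational point above $t_0 \in B(k)$ not in the branch divisor''; these are the same condition. Thus the theorem is obtained simply by specializing Proposition~\ref{prop:constant-extension-prescribed} from a general $k$-variety $B$ to $B = \Pp^1_k$ and rereading its three conclusions as the three clauses of the theorem.
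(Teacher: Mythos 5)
Your proposal is correct and coincides with the paper's own proof, which likewise obtains the theorem as the special case $B=\Pp^1_k$ of Proposition~\ref{prop:constant-extension-prescribed} with $\mathcal E=\widetilde F^E$; your version merely spells out the hypothesis-checking and the matching of the three conclusions in more detail.
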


\begin{proof}
This is a special case of Proposition~\ref{prop:constant-extension-prescribed}, with $B = \Pp^1_k$, and where we let $\mathcal E = \widetilde F^E$, using that $E/K$ is Galois with group $H$.
\end{proof}

Even more is known in the case of fields $k$ that are {\em ample} ({\em large}).  Recall that these are the fields $k$ with the property that every geometrically irreducible $k$-variety with a smooth $k$-point has infinitely many such points; they include in particular henselian fields, PAC fields, and totally real and totally $p$-adic closures of number fields.  Namely, for such fields $k$, given a finite group $G$ and a Galois extension $E/k$ with group $H \subset G$, 
there is a regular Galois extension $\mathcal E/k(T)$ with group $G$, having specified fiber $E^{(G:H)}/k$ (\cite{MR1745009}, \cite{MR1841345}); in particular, there is an affirmative answer to the arithmetic lifting problem in that situation.  Over such fields, concerning 
Question~\ref{geomBB}, the hypothesis of Theorem~\ref{geom BB thm} is satisfied and hence also the conclusion, including the assertion about the constant extension \hbox{in the Galois closure:} 

\begin{corollary} \label{BB ample}
Let $k$ be an ample (large) field, and let $G$ be a finite group with trivial center.
Given a Galois extension $E/k$ of group $H \subset G$, there is a geometrically Galois extension $F/k(T)$ of group $G$ with constant extension $E/k$ in its Galois closure, which specializes to $E^{(G:H)}/k$ at some unbranched point $t_0\in \Pp^1(k)$, and such that the corresponding cover has an unramified $k$-point.
\end{corollary}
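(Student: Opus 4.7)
The plan is to reduce the corollary to Theorem \ref{geom BB thm} via the RIGP for ample fields, and then to handle the extra assertion about the unramified $k$-rational point on the cover of the twisted extension by combining the twisting computation of Proposition \ref{prop:constant-extension-prescribed} with the arithmetic lifting property of ample fields. First I would invoke Pop's theorem, together with the standard genericity argument recorded at \cite[Remark~4.3]{DeDes1}, to produce a $k$-regular Galois extension $F_0/k(T)$ of group $G$ whose corresponding cover $X\to \Pp^1_k$ has an unramified $k$-rational point $P$ above some $t_0\in \Pp^1(k)$. Theorem \ref{geom BB thm} then applies to $F_0/k(T)$, the subgroup $H\subset G$, and the given Galois extension $E/k$, and produces a geometrically Galois extension $F:=\widetilde{F_0}^{E}/k(T)$ of group $G$, with constant extension in its Galois closure equal to $E/k$, and specializing to $E^{(G:H)}/k$ at $t_0$. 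What remains is to show that the cover $Y\to \Pp^1_k$ associated with $F$ admits an unramified $k$-rational point.

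Using the explicit twisted fundamental group representation from the proof of Proposition \ref{prop:constant-extension-prescribed}, I would next establish the following equivalence. Write $\phi:\pi_1(\Pp^1\smallsetminus D,t)_k\to G$ for the fundamental group representation of $F_0$ and $\varphi:\Gabs(k)\to G$ for the Galois representation of $E/k$, and note that $\phi\circ s_{t_0}=1$ since $P$ is a $k$-point of $X$ above $t_0$. A direct unwinding of the formula $\widetilde\phi^E(x\,s_{t_0}(\tau))(g)=\phi(x)\,g\,\varphi(\tau)^{-1}$ then shows that $Y$ has an unramified $k$-rational point above a point $t_1\in \Pp^1(k)\smallsetminus D$ if and only if there exists $g\in G$ with $\phi\circ s_{t_1}(\tau)=g\,\varphi(\tau)\,g^{-1}$ for every $\tau\in \Gabs(k)$; equivalently, $F_0/k(T)$ specializes at $t_1$ to a $k$-\'etale algebra isomorphic to $E^{(G:H)}$.

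The existence of such a $t_1$ is exactly the arithmetic lifting (Beckmann--Black type) statement for the regular Galois extension $F_0/k(T)$ and the prescribed Galois representation $\varphi$, which over ample fields is provided by \cite{MR1745009}, \cite{MR1841345}. The required unramified $k$-rational point of $Y$ then lies above any such $t_1$, completing the proof. The main obstacle will be the identification in the previous paragraph, which recasts the geometric question of $k$-rational points on the twisted cover $Y$ as the arithmetic question of specializing $F_0$ to a prescribed \'etale algebra; once this identification is made, the arithmetic lifting property for ample fields delivers the conclusion.
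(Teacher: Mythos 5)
Your first two steps are precisely the paper's proof: Pop's theorem together with \cite[Remark~4.3]{DeDes1} produces a $k$-regular $G$-Galois extension $F_0/k(T)$ whose cover has an unramified $k$-point above some $t_0$, and Theorem~\ref{geom BB thm} applied to $F_0$, $H$ and $E$ yields the geometrically Galois extension $F=\widetilde{F_0}^E$ with the stated group, constant extension and specialization at $t_0$. Your twisting-lemma criterion in the second paragraph is also correct: since the fiber of the twisted cover above $t_0$ is $E^{(G:H)}$ (so contains no $k$-point unless $E=k$), an unramified $k$-point must come from some other $t_1$, and it exists above $t_1\notin D$ exactly when $\phi\circ s_{t_1}$ is $G$-conjugate to $\varphi$, i.e.\ when $F_0$ specializes at $t_1$ to $E^{(G:H)}$ as a $G$-Galois algebra.

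The gap is in your final step. The results of \cite{MR1745009} and \cite{MR1841345} are existence statements: for the given $E/k$ there is \emph{some} regular $G$-Galois extension of $k(T)$ having fiber $E^{(G:H)}$ at some point. They do not say that the \emph{particular} $F_0$ you fixed in step one --- an arbitrary output of Pop's theorem --- admits $E^{(G:H)}$ among its specializations, and that stronger principle is false in general over ample fields, even when the cover of $F_0$ has unramified $k$-points. For instance over $\Qq_p$, with $c$ a non-square and $a$ a non-norm from $\Qq_p(\sqrt{c})$, the regular quadratic extension $\Qq_p(T)(\sqrt{T^2-a})$ has a rational conic as its cover (plenty of unramified $\Qq_p$-points), yet no specialization $\Qq_p(\sqrt{t_1^2-a})$ equals $\Qq_p(\sqrt{c})$, since $t_1^2-cu^2=a$ would make $a$ a norm; so the corresponding twisted cover has no unramified $\Qq_p$-point at all. (This is not a counterexample to the corollary, whose $G$ has trivial center, but it shows the inference you rely on --- ``ample field plus a $k$-point on the given cover implies every $E$ occurs as a specialization'' --- is invalid.) To close the argument you must build the desired specialization into the choice of $F_0$: Moret-Bailly's theorem \cite{MR1841345} allows prescribing fibers at finitely many points simultaneously, so one can take $F_0$ with totally split fiber at $t_0$ (supplying the unramified $k$-point required by Theorem~\ref{geom BB thm}) and fiber $E^{(G:H)}$ at a second unbranched point $t_1$ (supplying, via your criterion, the unramified $k$-point on the cover of $\widetilde{F_0}^E$). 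The paper's own two-line proof does not spell out this last clause either; your instinct that it needs an argument is sound, but the argument you give does not supply it.
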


\begin{proof}
By \cite{PopLarge} and \cite[Remark~4.3]{DeDes1}, the hypothesis on $k$ 
implies that there is
a $k$-regular Galois 
extension $N/k(T)$ of group $G$ (in fact, of any specified Galois group) with a $k$-rational point above some unbranched point $t_0\in k$. 
Thus the result follows from Theorem~\ref{geom BB thm}.
\end{proof}

Of course the key case above is with $H=G$, where we realize any given Galois extension of $k$ with group $G$ as a fiber of a geometrically Galois extension of $k(T)$.

\begin{remark} \label{rem:constant-extension}
\renewcommand{\theenumi}{\alph{enumi}}
\renewcommand{\labelenumi}{(\alph{enumi})}
\begin{enumerate}
\item \label{case_hilbertian}
Assume that $k$ is ample (large) and also Hilbertian; e.g., $k=\Qq^{\rm tr}(i)$ with $\Qq^{\rm tr}$ the field of totally real algebraic numbers. Let $H$ be a subgroup of a finite group $G$ with trivial center.  Then there exists a geometrically Galois extension $F/k(T)$ of group $G$ with constant extension $\widehat k_F/k$ of group $H$, and whose corresponding cover has an unramified $k$-point.
Indeed consider $N/k(T)$ as in the proof of Corollary~\ref{BB ample}, and similarly take a
$k$-regular Galois extension of group $H\subset G$.  Since $k$ is Hilbertian, this latter extension can be specialized to provide a Galois extension $E/k$ of group $H\subset G$. Corollary~\ref{BB ample} then provides an extension $F/k(T)$ as desired.
\item \label{constant_extension}
In general, given an extension of a field $k(T)$, the extension of constants in the Galois closure is not well understood; and this poses an obstacle for assertions such as the Regular Inverse Galois Problem.  The above results give some control over that extension of constants in special cases.  It would be desirable to obtain a more general understanding of the field extension $\widehat k_F/k$ and its Galois group.
\end{enumerate}
\end{remark}  

\begin{example} \label{example:L_0/k_not_pre-Galois} The following example completes Remark~\ref{rem:inside-Galois-closure}(\ref{inside rk b}). The extension $E/K$ constructed below is potentially Galois, of group $G$, over some extension $L/K$ 
that is pre-Galois, but the corresponding minimal extension $L_0= \LL(G)$ is not pre-Galois over $K$.

Take $K= k(T)$ with $k$ a field that, as in Remark~\ref{rem:constant-extension}(\ref{case_hilbertian}) above, is ample and Hilbertian. Fix a non-abelian simple group $G$ of order $d\geq 5$ and a nontrivial subgroup $H\subset G$, e.g. $G=A_5$ and $H =\langle (1 \hskip 2pt 2  \hskip 2pt 3  \hskip 2pt 4 \hskip 2pt 5) \rangle$. The assumptions on $k$ guarantee the existence of a $k$-regular Galois extension $N/k(T)$ of group $S_d$ whose corresponding cover has an 
unramified $k$-rational point, as well as the existence of a Galois extension $\varepsilon /k$ of group $H$ (see Remark~\ref{rem:constant-extension}(\ref{case_hilbertian})). 

Let $M\subset S_d$ be the subgroup fixing one letter and let $E$ be the fixed field in $N$ of $M$; so $E/k(T)$ is of degree $d$ and its Galois  closure is $N/k(T)$. By Proposition~\ref{hilb flds pot Gal cond}(\ref{hilb pot Gal}), $E/k(T)$ is potentially Galois of group $G$ (embedded in $S_d$ via the regular representation). By Theorem~\ref{prop:potentiallyGoG_characterization}(\ref{min fields give pot Gal}), $E/k(T)$ is in fact potentially Galois over $L_0 = \LL(G)$. Furthermore we have $L_0 = N^G$ and $N= \widehat E= EL_0$. By 
Corollary~\ref{prop:potentiallyGoG_characterization_cor}(\ref{pot cor complem}) (see also the proof of Proposition~\ref{hilb flds pot Gal cond}(\ref{potentially but no pre})), the extension $E/k(T)$ is not pre-Galois.  
In particular, $L_0/k(T)$ is not Galois. The next argument shows that $L_0/k(T)$ is not even pre-Galois.

First note that $N/k(T)$ is also the Galois closure of $L_0/k(T)$ (since $L_0\subset N$; $N/k(T)$ is Galois; 
and no nontrivial subgroup of $G$ is normal in $S_d$). By Corollary~\ref{prop:potentiallyGoG_characterization_cor}(\ref{pot cor SZ}), if  $L_0/k(T)$ were pre-Galois, then $G$ would have a normal complement in $\Gal(N/k(T))= S_d$.  But this is not the case, and so indeed $L_0/k(T)$ is not pre-Galois.

As $L_0\subset N$, the extension $L_0/k(T)$ is $k$-regular and the corresponding cover has an unramified $k$-rational 
point. One can write $L_0=k(B)$, with $B$ a smooth projective curve. The extension $N/k(B)$ is $k$-regular and is Galois of group $G$; and the corresponding cover has an unramified $k$-rational point. Thus one may apply Proposition~\ref{prop:constant-extension-prescribed}.

Let $L/k(B)$ be the extension obtained by twisting $N/k(B)$ by $\varepsilon/k$ (in the notation of Proposition \ref{prop:constant-extension-prescribed}, we have $L=\widetilde N^\varepsilon$).  
Since $G$ is simple, it follows that $N\cap L = k(B)=L_0$ and so $N/k(B)$ and 
$L/k(B)$ are linearly disjoint. By construction, $E/k(T)$ and $k(B)/k(T)$ are also linearly disjoint. Therefore $E/k(T)$ and $L/k(T)$ are linearly disjoint, and so $E/k(T)$ remains potentially Galois over $L$. Recall that, from Theorem \ref{prop:potentiallyGoG_characterization}(\ref{min subextens}), both groups $\GG(L)=\Gal(EL/L)$ and  $\GG(L_0)=\Gal(EL_0/L_0)$ coincide and are equal to $G$. Note further that the extension $L/k(T)$ is not Galois for otherwise $L_0/k(T)$ would be Galois, again by Theorem \ref{prop:potentiallyGoG_characterization}(\ref{min subextens}).

Finally we show that $L/k(T)$ is pre-Galois. More precisely, we verify below that it is pre-Galois over 
$\varepsilon(T)$. Note first that, as $\varepsilon/k$ is Galois, $\varepsilon(T)/k(T)$ is Galois too. Then, from 
Proposition~\ref{prop:constant-extension-prescribed}, we have 
$L\hskip 1pt \varepsilon(T) = L\varepsilon = N\varepsilon$.  This field is a Galois extension of $k(T)$ 
(hence a Galois extension of $\varepsilon(T)$), as it is the compositum 
of the two Galois extensions $N/k(T)$ and $\varepsilon(T)/k(T)$. Finally 
$L/k(T)$ and $\varepsilon(T)/k(T)$ are linearly disjoint since 
$L\cap \overline k = k$ (as a consequence of the $k$-regularity 
of $L/k(B)$).  Thus $L/k(T)$ is pre-Galois, as asserted.
\end{example}

\subsection{A descent result} \label{ssec:main-result} 

In this subsection we address Question~\ref{geom_Gal_base_chg} in Corollary~\ref{thm:fod}, which we deduce from a more general version, Theorem~\ref{thm: main fod}.  We also prove Corollary~\ref{cor:power-simple}, which concerns the geometric inverse Galois problem discussed in the introduction.

Recall the following useful terminology. 
Let ${\mathcal F}/k^\sep(B)$ be a finite $k^{\sep}$-regular extension.

\renewcommand{\theenumi}{\alph{enumi}}
\renewcommand{\labelenumi}{(\alph{enumi})}
\begin{enumerate}
\item
We say that ${\mathcal F}/k^\sep(B)$ 
{\em descends to $k$ as a field extension}
if there exists a $k$-regular extension $F/k(B)$ such that ${\mathcal F} = Fk^{\sep}$. If ${\mathcal F}/k^\sep(B)$ is Galois, then the extension $F/k(B)$ is geometrically Galois, of group $\Gal({\mathcal F}/k^\sep(B))=\Gal({\mathcal F\overline k}/\overline k(B))$. If in addition $F/k(B)$ is Galois, we say that ${\mathcal F}/k^\sep(B)$ {\em descends to $k$ as a Galois extension}. 

\item
Consider the subgroup $M_{\rm m}({\mathcal F}/k^\sep(B))$ of the absolute Galois group $\Gabs(k)$, consisting of all $\tau$ such that for every prolongation (equivalently for some prolongation) of $\tau$ to a $k(B)$-automorphism $\widetilde \tau$ of $k(B)^{\sep}$, there is a $k^{\sep}(B)$-isomorphism $\chi_\tau:{\mathcal F}\rightarrow {\mathcal F}^{\tilde \tau}$. The fixed field in $k^{\sep}$ of the subgroup $M_{\rm m}({\mathcal F}/k^\sep(B))$ is called  
 {\it the field of moduli of ${\mathcal F}/k^\sep(B)$ as a field extension} (relative to $k^{\sep}/k$) and is denoted by $k_{\rm m}({\mathcal F}/k^\sep(B))$, or $k_{\rm m}$ for short.

If ${\mathcal F}/k^\sep(B)$ is Galois, consider the subgroup $M_G({\mathcal F}/k^\sep(B))\subset \Gabs(k)$, consisting of all $\tau\in \Gabs(k)$ such that for every prolongation (equivalently for some prolongation) of $\tau$ to a $k(B)$-automorphism $\widetilde \tau$ of $k(B)^{\sep}$, there exists a $k^{\sep}(B)$-isomorphism $\chi_\tau:{\mathcal F}\rightarrow {\mathcal F}^{\tilde \tau}$ 
such that $\tau  \hskip 1pt \sigma  \hskip 1pt \tau^{-1} =  \chi_\tau \hskip 1pt \sigma \hskip 1pt  \chi_\tau^{-1}$ for every $\sigma \in \Gal({\mathcal F}/k^\sep(B))$.
The fixed field in $k^{\sep}$ of the subgroup $M_{\rm m}({\mathcal F}/k^\sep(B))$ is called  
 {\it the field of moduli of ${\mathcal F}/k^\sep(B)$ as a Galois extension} (relative to $k^{\sep}/k$) and is denoted by $k_G({\mathcal F}/k^\sep(B))$, or $k_G$ for short.
 
If ${\mathcal F}/k^\sep(B)$ descends to $k$ in either sense, we have $k_{\rm m} =k$ and $k_G=k$ respectively.  (For more on fields of definition and fields of moduli, see \cite{Fr77}, \cite{coombes-harbater}, \cite{DeDo1}, and \cite{DeMSRI}.)
\end{enumerate}

\begin{theorem} \label{thm: main fod} 
Assume 
that the set $B(k)$ of $k$-rational points 
of the $k$-variety $B$ is Zariski-dense.
Let ${\mathcal F}/k^\sep(B)$ be a $k^{\sep}$-regular extension of degree $d$ and with field of moduli $k$, $N/k^{\sep}(B)$ its Galois closure, $G=\Gal(N/k^{\sep}(B))$ its monodromy group (also equal to $\Gal(N\overline k/\overline k(B))$, and $G\hookrightarrow S_d$ the monodromy action associated with ${\mathcal F}/k^{\sep}(B)$. 
\renewcommand{\theenumi}{\alph{enumi}}
\renewcommand{\labelenumi}{(\alph{enumi})}
\begin{enumerate}
\item \label{fld of def a}
Then $N/k^\sep(B)$ 
descends to $k$ as a field extension, and to $k'$ as a Galois extension, for some Galois extension $k'$ of $k$ whose Galois group is 
contained in ${\rm Aut}(G)$.
\item \label{fom k_G}
Let $k_G$ be the field of moduli of $N/k^\sep(B))$ as a Galois extension.  
Then $k_G/k$ is Galois, and its Galois group $\Gal(k_G/k)$ is a subgroup of $\Nor_{S_d}(G) /(G
\Cen_{S_d}(G))$. In particular, if ${\mathcal F}/k^\sep(B)$ is Galois, then $k_G$ is its field of moduli as a Galois extension, and $\Gal(k_G/k)$ is a subgroup of $\Out(G)$.
\item \label{fld of def b}
Assume further that the exact sequence 
$1 \to Z(G) \to G \to \Inn(G) \to 1$ is split or that ${\rm cd}(k)\leq 1$. 
Then $N/k^\sep(B)$ descends to its field of moduli $k_G$ as a Galois extension.
\end{enumerate}
\end{theorem}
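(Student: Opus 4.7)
The plan is to work with the fundamental-group representation of $\mathcal{F}/k^\sep(B)$ attached to a $k$-rational base point, and then translate the field-of-moduli hypothesis into explicit cocycle data from which each of the three parts can be extracted. Since $B(k)$ is Zariski-dense and the branch divisor $D$ of $\mathcal{F}/k^\sep(B)$ is proper, I pick $t_0 \in B(k) \setminus D$; this yields a section $s_{t_0}\colon \Gabs(k) \to \pi_1(B \setminus D, t_0)_k$ splitting the arithmetic fundamental-group sequence. Let $\phi\colon \pi_1(B \setminus D, t_0)_{k^\sep} \to S_d$ be the representation of $\mathcal{F}/k^\sep(B)$, with image $G$.

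For part~(\ref{fld of def a}), the field-of-moduli hypothesis provides, for each $\tau \in \Gabs(k)$, an element $s_\tau \in \Nor_{S_d}(G)$ (unique modulo $\Cen_{S_d}(G)$) with $\phi \circ \mathrm{conj}(s_{t_0}(\tau)) = \mathrm{conj}(s_\tau) \circ \phi$. A direct computation, using that $s_{t_0}$ is a section, shows $s_{\tau_1\tau_2} \equiv s_{\tau_1} s_{\tau_2} \pmod{\Cen_{S_d}(G)}$, so $\tau \mapsto s_\tau$ descends to a group homomorphism $\alpha \colon \Gabs(k) \to \Nor_{S_d}(G)/\Cen_{S_d}(G) \hookrightarrow \Aut(G)$. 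Let $k'$ be the fixed field of $\ker(\alpha)$; then $k'/k$ is Galois with $\Gal(k'/k) \subset \Aut(G)$, and over $k'$ one may take $s_\tau = 1$ for $\tau \in \Gabs(k')$, so that $\phi^{k'}(s_{t_0}(\tau) x) := \phi(x)$ is a homomorphism $\pi_1(B \setminus D, t_0)_{k'} \to S_d$ extending $\phi$. This yields a $k'$-regular Galois extension $F'/k'(B)$ with $F' k^\sep = N$, i.e., a descent of $N$ to $k'$ as a Galois extension. The descent of $N$ to $k$ as a mere field extension is then obtained by descending the mere cover $\mathcal{F}/k^\sep(B)$ itself to $k$ via the rational-point descent theorem (cf.~\cite{DeDo1}, \cite{DeMSRI}) and taking its Galois closure.

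For part~(\ref{fom k_G}), reducing $\alpha$ modulo inner automorphisms yields a homomorphism $\bar\alpha\colon \Gabs(k) \to \Out(G)$ whose image lies in the subgroup $\Nor_{S_d}(G)/(G\Cen_{S_d}(G))$. An unwinding of the definition of $M_G(N/k^\sep(B))$ identifies it with $\ker(\bar\alpha)$, so its fixed field is exactly $k_G$; this yields both that $k_G/k$ is Galois and the asserted embedding $\Gal(k_G/k) \hookrightarrow \Nor_{S_d}(G)/(G\Cen_{S_d}(G))$. When $\mathcal{F}/k^\sep(B)$ is itself Galois, the monodromy action is the regular representation and $\Nor_{S_d}(G)/G \cong \Aut(G)$ (as noted in Section~\ref{ssec:geometrically_Galois}), so the target simplifies to $\Aut(G)/\Inn(G) = \Out(G)$.

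For part~(\ref{fld of def b}), the remaining obstruction to refining the $k'$-descent of~(a) down to a $k_G$-descent of $N$ as a Galois extension is a cohomology class with coefficients in $Z(G)$, arising from lifting $\bar\alpha|_{\Gabs(k_G)}$ across the central extension $1 \to Z(G) \to G \to \Inn(G) \to 1$. If that sequence splits, a chosen splitting produces an honest action of $\Gabs(k_G)$ on $G$ lying above $\bar\alpha$, killing the obstruction directly. If instead $\cd(k) \leq 1$, then $H^2(\Gabs(k_G), Z(G)) = 0$ since $Z(G)$ is finite, so the obstruction vanishes automatically. In either case the descent of $N$ to $k_G$ as a Galois extension follows. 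The main obstacle will be a clean identification of this $H^2$-obstruction with the class of the central extension $1 \to Z(G) \to G \to \Inn(G) \to 1$, for which I would invoke the twisting-lemma formalism of \cite{DeDo1}, \cite{DeLe2}, and \cite{DeBB}.
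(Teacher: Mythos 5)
Your overall strategy---encoding the field-of-moduli condition via the section $s_{t_0}$ attached to a point of $B(k)$ outside the branch locus as a homomorphism $\alpha\colon \Gabs(k)\to \Nor_{S_d}(G)/\Cen_{S_d}(G)$, and reading off the conclusions from $\alpha$ and its reduction modulo $G$---is essentially the paper's proof. Part~(\ref{fom k_G}) is argued identically (your $\alpha$ is the paper's $\overline\varphi\circ {\sf s}_{t_0}$, and $\Gabs(k_G)$ is the preimage of $G\,\Cen_{S_d}(G)/\Cen_{S_d}(G)$, as in \cite[Section~2.7]{DeDo1}). Your sketch of part~(\ref{fld of def b}) correctly unpacks the content of the results of \cite{DeDo1} that the paper cites as a black box: over $k_G$ one gets a homomorphism $\Gabs(k_G)\to \Inn(G)$, the obstruction to extending $\phi$ to $\pi_1(B\smallsetminus D,t_0)_{k_G}$ is the pullback of the class of $1\to Z(G)\to G\to \Inn(G)\to 1$ along that homomorphism, and it dies either by composing with a splitting or because $\cd(k_G)\le \cd(k)\le 1$ kills $H^2$ with finite coefficients. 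Your direct construction of the Galois descent over $k'=(k^{\rm sep})^{\ker\alpha}$ is also sound, and is somewhat more self-contained than the paper's appeal to the constant extension $\widehat k_{N_0}$ of a mere model $N_0$ of $N$.

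The one step that fails is your proof of the first conclusion of part~(\ref{fld of def a}), that $N/k^{\rm sep}(B)$ descends to $k$ \emph{as a field extension}. You descend the mere cover $\mathcal F/k^{\rm sep}(B)$ to a $k$-regular $F/k(B)$ and take its Galois closure $\widehat F/k(B)$. But a descent of $N$ as a field extension must be a $k$-\emph{regular} model, i.e.\ of degree $|G|$ over $k(B)$, whereas $[\widehat F:k(B)]=|G|\cdot[\widehat k_F:k]$ with $\widehat k_F$ the constant extension in the Galois closure of $F/k(B)$; this constant extension is nontrivial in general (e.g.\ $F=\Qq(T^{1/3})$ has Galois closure $\Qq(\zeta_3,T^{1/3})$ of degree $6$ over $\Qq(T)$, which is not a $\Qq$-regular model of $\overline\Qq(T^{1/3})/\overline\Qq(T)$). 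So $\widehat F/k(B)$ is not a model of $N/k^{\rm sep}(B)$, and your $k'$-descent does not repair this since it lives over $k'$, not $k$. The fix is the paper's first observation: $N$ is the compositum of the $\Gabs(k)$-conjugates of $\mathcal F$, hence $N^{\tilde\tau}=N$ for every prolongation $\tilde\tau$ of every $\tau\in\Gabs(k)$, so $N/k^{\rm sep}(B)$ has field of moduli $k$ as a \emph{mere} cover; one then applies the rational-point descent theorem (\cite[Proposition~2.5]{coombes-harbater}, extended to this setting via \cite{DeDo1}) directly to $N$ rather than to $\mathcal F$.
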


The fields $\Qq^{\rm ab}$ and $\overline k(T)$ are typical examples for which ${\rm cd}(k)\leq 1$.

\begin{proof}
We begin with part~(\ref{fld of def a}).  As $N$ is the compositum of all the $k^{\sep}(B)$-conjugates of ${\mathcal F}$, it follows from the field of moduli of ${\mathcal F}/k^{\sep}(T)$ being $k$ that
$N^{\tilde \tau} = N$ for every prolongation $\tilde \tau$ to $N$ of every $\tau\in \Gabs(k)$. This means that the field of moduli of $N/k^{\sep}(B)$ as a field extension is $k$. The first conclusion of~(\ref{fld of def a}) follows then from \cite[Proposition~2.5]{coombes-harbater}, 
which says that this extension descends to its field of moduli $k$.  (The cited result was originally stated with $B=\Pp^1_\Qq$, where $\overline k(B)= \overline \Qq(T)$.  But the proof extends to more general fields $k$ and varieties $B$ such that $B$ has a $k$-point 
where the cover corresponding to $N$ is not branched;
see \cite[Section~2.9 and Corollary 3.4]{DeDo1}.)

As to the second conclusion of~(\ref{fld of def a}), it rests on the standard fact (recalled at the beginning of Section \ref{ssec:geometrically_Galois})
that if a $k$-regular extension $N_0/k(B)$ induces $N/k^{\sep}(B)$, then $N_0/k(B)$ becomes Galois after extending scalars from $k$ to $\widehat k_{N_0}$ and that ${\rm Gal}(\widehat k_{N_0}/k)\subset {\rm Aut}(G)$.

For (\ref{fom k_G}), we view $k$-regular extensions of $k(B)$ as fundamental group representations. 
Recall that we have the {\it fundamental group exact sequence}
\[ 1\rightarrow \pi_1(B \smallsetminus D, t)_{k^{\hbox{\sevenrm sep}}} \rightarrow \pi_1(B \smallsetminus D,t)_k \rightarrow \Gabs(k) \rightarrow 1 \eqno{(*)} \]
\noindent
and each point $t_0\in B(k)\smallsetminus D$ provides a section ${\sf s}_{t_0}: \Gabs(k)\rightarrow \pi_1(B\smallsetminus D,t)_{k}$. 

The extension ${\mathcal F}/k^{\sep}(B)$ corresponds to a transitive homomorphism $\phi_{k^{\hbox{\sevenrm sep}}}: \pi_1(B\smallsetminus D, t)_{k^{\hbox{\sevenrm sep}}} \rightarrow S_d$ with $d=[{\mathcal F}:k^{\sep}(B)]$. 
Furthermore the Galois closure $N/k^{\sep}(B)$ corresponds to the epimorphism $\phi_{k^{\hbox{\sevenrm sep}}}: \pi_1(B\smallsetminus D, t)_{k^{\hbox{\sevenrm sep}}} \rightarrow G$ and we have $G=\phi_{k^{\hbox{\sevenrm sep}}}(\pi_1(B\smallsetminus D, t)_{k^{\hbox{\sevenrm sep}}})$.

Fix a point $t_0\in B(k)\smallsetminus D$.  
Since $k$ is the field of moduli of ${\mathcal F}/k^{\sep}(B)$, there is a natural map
\[\overline \varphi:\pi_1(B\smallsetminus D, t)_{k} \rightarrow \Nor_{S_d}(G) /\Cen_{S_d}(G)\]
such that for each $\tau \in \Gabs(k)$,
\[\phi_{k^{\hbox{\sevenrm sep}}}(x^{{\sf s}_{t_0}(\tau)}) = \phi_{k^{\hbox{\sevenrm sep}}}(x)^{(\overline \varphi\circ {\sf s}_{t_0})(\tau)} \mathrm{\ for\ all\ } x\in \pi_1(B\smallsetminus D, t)_{k^{\hbox{\sevenrm sep}}}.\]
(See \cite[Section 2.7]{DeDo1}; there this map is called the ``representation of $\pi_1(B\smallsetminus D, t)_{k}$ modulo $\Cen_{S_d}(G)$ given by the field of moduli condition''.)
Consider the subgroup 
\[H=(\overline \varphi \circ {\sf s}_{t_0})^{-1}(G\hskip 1pt \Cen_{S_d}(G)/\Cen_{S_d}(G)) \subset \Gabs(k).\]
Its fixed field in $k^{\sep}$ is the field of
moduli $k_G$ of $N/\overline k(B)$ as a Galois extension (see \cite[Section 2.7]{DeDo1}). 

As $H$ is normal in $\Gabs(k)$, the extension $k_G/k$ is Galois. More
precisely, $H$ is the kernel of the map $\overline \varphi\circ {\sf s}_{t_0}$ composed
with the canonical surjection $\Nor_{S_d}(G) / \Cen_{S_d}(G) \rightarrow
\Nor_{S_d}(G) / (G\Cen_{S_d}(G))$. This yields the desired
embedding of  $\Gal(k_G/k) \simeq \Gabs(k)/H$ into $\Nor_{S_d}(G) / G\Cen_{S_d}(G)$.
In the special case that ${\mathcal F}/k^{\sep}(B)$ is Galois, the monodromy action $G\rightarrow S_d$ is the regular representation, and the group $\Nor_{S_d}(G) / (G\Cen_{S_d}(G))$ from the general case is simply $\Out(G)$ (e.g., see the proof of \cite[Proposition 3.1]{DeDo1}).

We next turn to part~(\ref{fld of def b}). The assumption 
in this part
implies that 
the field of moduli $k_G$ of $N/k^{\sep}(B)$ is a field of definition as a Galois extension, 
by Corollary~3.2 and Corollary~3.4 (or Main Theorem~III(b)) of \cite{DeDo1}. 
\end{proof}

\begin{remark} \label{rk for main fod}
\renewcommand{\theenumi}{\alph{enumi}}
\renewcommand{\labelenumi}{(\alph{enumi})}
\begin{enumerate} 
\item \label{rk:CH needs dense}
Note that \cite[Corollary~3.4]{DeDo1} (used above) assumed that the fundamental group exact sequence $(*)$ splits; 
a condition called (Seq/Split) there.  That assumption is satisfied here, as a consequence of our assumption that $B(k)$ is Zariski-dense; so \cite[Corollary~3.4]{DeDo1} does apply. Our assumption on $B(k)$ also 
guarantees that for every branched cover of $B$ there are $k$-points of $B$ outside the branch locus; and so we can use \cite[Proposition~2.5]{coombes-harbater}.  But to be able to cite \cite[Corollary~3.4]{DeDo1}, the density assumption 
may be replaced by the weaker condition (Seq/Split).

\item \label{Hilaf} A weaker form of Theorem \ref{thm: main fod}(\ref{fom k_G}) appeared in a paper of H.~Hasson, saying that $\Gal(k_G/k)$ is a subquotient of $\Aut(G)$; see \cite[Corollary 3.8]{Has15}. 

\item \label{Nor Out}
The proof of Theorem~\ref{thm: main fod}(\ref{fom k_G}) used that $\Nor_{S_d}(G) / (G\Cen_{S_d}(G)) = \Out(G)$ if $G\hookrightarrow S_d$ is the regular representation.  More generally, for any embedding $G\hookrightarrow S_d$, there is a natural monomorphism ${\rm Nor}_{S_d}(G)/G\hskip 1pt {\rm Cen}_{S_d}(G) \hookrightarrow {\rm Out}(G)$; so $|{\rm Nor}_{S_d}(G)/G\hskip 1pt {\rm Cen}_{S_d}(G)|$ divides $|{\rm Out}(G)|$.   This is not an equality in general: e.g., if $G=A_6$ and $G \hookrightarrow S_6$ is given by the containment $A_6 \subset S_6$, then ${\rm Nor}_{S_6}(G) = S_6$ and ${\rm Cen}_{S_6}(G) = \{1\}$; so $|{\rm Nor}_{S_6}(G)/G\hskip 1pt {\rm Cen}_{S_6}(G)|= 2$ whereas $|{\rm Out}(G)|=4$.
\end{enumerate} 
\end{remark}

\begin{corollary} \label{thm:fod} 
Let $G$ be a finite group and $k$ an arbitrary field.
\renewcommand{\theenumi}{\alph{enumi}}
\renewcommand{\labelenumi}{(\alph{enumi})}
\begin{enumerate}
\item \label{geom gp monodromy}
Then $G$ is a geometric Galois group over $k$
if and only if $G$ is the monodromy group of a $k^{\rm sep}$-regular 
extension ${\mathcal F}/k^{\sep}(T)$ that has field of moduli $k$ (as a field extension). 
\item \label{geom gp wk bound}
If $G$ is a geometric Galois group over $k$, then $G$ is a regular Galois group over some Galois extension of $k$ of degree dividing $|{\rm Aut}(G)|$. 
\item \label{geom gp bound}
Let $G$ be a geometric Galois group over $k$, and suppose that
the exact sequence $1 \to Z(G) \to G \to \Inn(G) \to 1$ is split or that ${\rm cd}(k)\leq 1$.  Then $G$ is a regular Galois group over some Galois extension of $k$ of degree dividing $|{\rm Out}(G)|$. Consequently, if in addition $\Out(G)$ is trivial, then 
$G$ is a regular Galois group over $k$.
\end{enumerate}
\end{corollary}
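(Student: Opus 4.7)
The plan is to deduce all three parts of the corollary directly from Theorem~\ref{thm: main fod}, applied with $B=\Pp^1_k$ (so that $B(k)$ is automatically Zariski-dense). The only real work is translating between the two ways of speaking about the same object: ``$G$ is a geometric Galois group over $k$'' means a $k$-regular $F/k(T)$ with $F\overline k/\overline k(T)$ Galois of group $G$, while the hypothesis of Theorem~\ref{thm: main fod} is ``$\mathcal F/k^{\sep}(T)$ with monodromy group $G$ and field of moduli $k$''.

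For part~(\ref{geom gp monodromy}), the forward direction unwinds definitions: given such an $F/k(T)$, set $\mathcal F = F k^{\sep}$. This is $k^{\sep}$-regular and Galois over $k^{\sep}(T)$ of group $G$, hence has monodromy group $G$; and it visibly descends to $k$, so has field of moduli~$k$. For the converse, start from $\mathcal F/k^{\sep}(T)$ with monodromy group $G$ and field of moduli $k$; its Galois closure $N/k^{\sep}(T)$ is Galois of group $G$, and Theorem~\ref{thm: main fod}(\ref{fld of def a}) gives $N = F_0 k^{\sep}$ for some $k$-regular $F_0/k(T)$, which is therefore geometrically Galois of group $G$. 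Part~(\ref{geom gp wk bound}) then follows by invoking the second assertion of Theorem~\ref{thm: main fod}(\ref{fld of def a}) in this same situation: $N$ descends as a Galois extension to some Galois $k'/k$ with $\Gal(k'/k) \subset \Aut(G)$, providing a $k'$-regular Galois realization of $G$ over $k'(T)$ with $[k':k]$ dividing $|\Aut(G)|$.

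For part~(\ref{geom gp bound}) I would be slightly more careful in order to sharpen $\Aut(G)$ to $\Out(G)$: I start directly from a $k$-regular geometrically Galois $F/k(T)$ of group $G$ and set $\mathcal F = F k^{\sep}$, so that $\mathcal F$ is itself already Galois (hence $\mathcal F$ coincides with its Galois closure $N$). The ``in particular'' clause of Theorem~\ref{thm: main fod}(\ref{fom k_G}) then embeds $\Gal(k_G/k)$ into $\Out(G)$, while under either of the two hypotheses (split center sequence, or $\cd(k)\leq 1$) Theorem~\ref{thm: main fod}(\ref{fld of def b}) descends $N$ to $k_G$ as a Galois extension. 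This exhibits $G$ as a regular Galois group over $k_G$ with $[k_G:k]$ dividing $|\Out(G)|$; and when $\Out(G)$ is trivial it forces $k_G = k$, giving the final conclusion.

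The whole argument is essentially bookkeeping, because the genuine content (descent to the field of moduli, the field-of-moduli computation, and the splitting/cohomological criterion for Galois descent) has already been absorbed into Theorem~\ref{thm: main fod}. The main obstacle, such as it is, is the minor conceptual point of remembering to take $\mathcal F$ itself Galois in part~(\ref{geom gp bound}), so that the ``$\Out(G)$'' refinement of Theorem~\ref{thm: main fod}(\ref{fom k_G}) is available rather than only the weaker ``$\Aut(G)$'' bound used in part~(\ref{geom gp wk bound}).
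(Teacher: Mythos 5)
Your proposal is correct and follows essentially the same route as the paper: part (a) forward by taking $\mathcal F = Fk^{\mathrm{sep}}$, part (a) reverse and part (b) from Theorem~\ref{thm: main fod}(\ref{fld of def a}), and part (c) from Theorem~\ref{thm: main fod}(\ref{fom k_G},\ref{fld of def b}) applied to the already-Galois extension $Fk^{\mathrm{sep}}/k^{\mathrm{sep}}(T)$ so that the $\Out(G)$ bound is available. The point you flag about needing $\mathcal F$ Galois in part (c) is exactly the right one, and your write-up just makes explicit what the paper leaves terse.
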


\begin{proof} The condition that the group $G$ is a geometric Galois group over $k$ means that there is a $k$-regular extension $F/k(T)$ such that $F\overline k/\overline k(T)$ is Galois of group $G$; or equivalently, as remarked in Section \ref{ssec:geometrically_Galois}, such that  $Fk^\sep/k^\sep(T)$ is Galois of group $G$.

For the forward implication in part~(\ref{geom gp monodromy}),  the asserted conclusion holds with $\mathcal F = F k^{\rm sep}$. The reverse implication in part~(\ref{geom gp monodromy}) and the assertion in part~(\ref{geom gp wk bound}) follow from Theorem \ref{thm: main fod}(\ref{fld of def a}).
We obtain part (\ref{geom gp bound}) from 
Theorem~\ref{thm: main fod}(\ref{fom k_G},\ref{fld of def b}).
\end{proof}

\begin{remark}\label{GeoIGP=RIGP}
\renewcommand{\theenumi}{\alph{enumi}}
\renewcommand{\labelenumi}{(\alph{enumi})}
\begin{enumerate}
\item \label{RIGPandGeoIGP}
Corollary \ref{thm:fod} has the following consequence, which can be compared to Corollary \ref{preIGP=IGP}: given a  field $k$, the statements that all finite groups are geometric Galois groups over $k$ and that all finite groups are regular Galois groups over $k$ are equivalent. 
Namely, let $G$ be a finite group. By \cite[Theorem~1]{HR80}, $G$ is a quotient of some
complete group $\widetilde G$. If all finite groups are geometric Galois groups over $k$, then $\widetilde G$ is.
By Corollary~\ref{thm:fod}, $\widetilde G$ is a regular Galois group over $k$. It follows that $G$ is a regular Galois group over $k$ as well.
\item \label{RIGPandGeoIGP_Hurwitz} Remark~(\ref{RIGPandGeoIGP}) above 
suggests a possible strategy for attacking the RIGP.  
Given a finite group $G$ and an integer $r \ge 1$, there is a moduli space in characteristic zero for the Galois branched covers of $\Pp^1$ with $r$ branch points whose Galois group is isomorphic to $G$.  (E.g., see \cite[Section~1]{coombes-harbater}, where this {\it Hurwitz space} is denoted by $\widetilde{\mathcal P}$; and \cite[Section~1.2]{FV91}, where it is denoted by ${\mathcal H}^{\rm ab}_r(G)$.)  For $k$ an extension of $\Qq$, a $k$-rational point on this space corresponds to a Galois extension of $\overline k(T)$ with group $G$ whose field of moduli as a field extension is contained in $k$; or equivalently (by Theorem~\ref{thm: main fod}(\ref{fld of def a})) to a geometrically Galois extension of $k(T)$ with group $G$.  If for {\it every} finite group $G$ there is a $k$-point on ${\mathcal H}^{\rm ab}_r(G)$ for some $r$ depending on $G$, then Remark~(\ref{RIGPandGeoIGP}) implies that every finite group is a regular Galois group over $k$.  
(Compare this with \cite[Theorem~1]{FV91}, which considers a related Hurwitz space ${\mathcal H}^{\rm in}_r(G)$, parametrizing pairs consisting of a cover as above together with an isomorphism of its Galois group with $G$.  It shows that a point on ${\mathcal H}^{\rm in}_r(G)$ 
corresponds to a Galois extension of $\overline k(T)$ with group $G$ whose field of moduli as a Galois extension contains $k$; and hence to a Galois extension of $k(T)$ with group $G$ if $G$ has trivial center.)
\end{enumerate}
\end{remark}

\begin{corollary} \label{cor:power-simple}  Let $k$ be a  field and $G$ be a finite group.
\renewcommand{\theenumi}{\alph{enumi}}
\renewcommand{\labelenumi}{(\alph{enumi})}
\begin{enumerate}
\item \label{quot of geom Gal}
Then $G$ is a quotient of some geometric Galois group over $k$. Hence if $k$ is Hilbertian, $G$ is also a quotient 
of a pre-Galois group over $k$.
\item \label{part 2}
If $G$ is a simple group, then some power $G^n$ is a geometric 
 Galois group over $k$. Hence if $k$ is Hilbertian, $G^n$ is a pre-Galois group over $k$. 
\end{enumerate}
\end{corollary}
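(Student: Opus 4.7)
The plan is to realize $G$ over $k^{\sep}(T)$, form the compositum of all its $\Gal(k^{\sep}/k)$-conjugates inside a separable closure of $k^{\sep}(T)$, and descend the resulting extension to $k$ by Theorem~\ref{thm: main fod}(\ref{fld of def a}). First, by the regular inverse Galois problem over $k^{\sep}$ (a special case of RIGP over large fields, exactly as in the proof of Proposition~\ref{hilb flds pot Gal cond}(\ref{hilb Gal gps over Gal extn})), fix a Galois extension $N/k^{\sep}(T)$ with $\Gal(N/k^{\sep}(T)) \cong G$. Each $\tau \in \Gal(k^{\sep}/k)$ lifts to a $k(T)$-automorphism $\widetilde\tau$ of a separable closure of $k^{\sep}(T)$, and because $N/k^{\sep}(T)$ is Galois, the subfield $\widetilde\tau(N)$ depends only on $\tau$. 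Let $N = N_1, \ldots, N_n$ be the finitely many distinct such conjugates and set $\widetilde N := N_1 \cdots N_n$. Then $\widetilde N/k^{\sep}(T)$ is a finite $k^{\sep}$-regular Galois extension (compositum of such), preserved setwise by every $\widetilde\tau$, so its field of moduli as a field extension over $k$ equals $k$. Applying Theorem~\ref{thm: main fod}(\ref{fld of def a}) then yields a $k$-regular extension $F/k(T)$ with $Fk^{\sep} = \widetilde N$; since $\widetilde N/k^{\sep}(T)$ is Galois, $F/k(T)$ is geometrically Galois with geometric Galois group $\widetilde\Gamma := \Gal(\widetilde N/k^{\sep}(T))$. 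The restriction maps $\widetilde\Gamma \to \Gal(N_i/k^{\sep}(T)) \cong G$ embed $\widetilde\Gamma$ as a subdirect product of $G^n$, i.e.\ with surjective projection onto each factor.

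Part~(\ref{quot of geom Gal}) then follows at once: projection onto any single factor exhibits $G$ as a quotient of the geometric Galois group $\widetilde\Gamma$ over $k$. For part~(\ref{part 2}) with $G$ simple, I would run the standard Goursat induction on $n$. The intersection of a subdirect product $H \subseteq G^n$ with the coordinate kernel $\{1\}^{n-1}\times G$ is a normal subgroup of $G$ and hence either trivial or all of $G$. In the trivial case the projection away from that coordinate is an isomorphism onto a subdirect product of $G^{n-1}$. In the ``all of $G$'' case the assignment $(x_1,\ldots,x_{n-1}) \mapsto (x_1,\ldots,x_{n-1},1)$ lifts to a section inside $H$ (check: given any $(x_1,\ldots,x_{n-1},y)\in H$, multiply by $(1,\ldots,1,y^{-1})$), so $H$ splits as the direct product of its image in $G^{n-1}$ and the last copy of $G$. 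By induction every subdirect product of $G^n$ is isomorphic to some $G^m$ with $1\le m\le n$; the abelian case $G = \Zz/p\Zz$ is immediate because every $\Ff_p$-subspace of $\Ff_p^n$ is free. Hence $\widetilde\Gamma \cong G^m$, and this power of $G$ is a geometric Galois group over $k$.

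Under the Hilbertian hypothesis the pre-Galois consequences drop out automatically: Proposition~\ref{prop:geom vs pot}(\ref{geom Gal pre}) turns $F/k(T)$ into a pre-Galois extension of $k(T)$ with the same group $\widetilde\Gamma$, and Proposition~\ref{intro:IGP-implications} then descends $\widetilde\Gamma$ to a pre-Galois group over $k$; the quotient and power-of-$G$ statements for $G$ transfer from those for $\widetilde\Gamma$. The main step requiring real care is the Goursat induction in part~(\ref{part 2}) for non-abelian simple $G$, to ensure that subdirect products of $G^n$ really are abstract powers of $G$ and not more exotic twisted constructions; the well-definedness of $\widetilde N$ and the identification of its field of moduli with $k$ are routine once the setup is in place.
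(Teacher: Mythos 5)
Your proposal is correct and follows essentially the same route as the paper: realize $G$ over $k^{\sep}(T)$ via RIGP over large fields, take the compositum of the $\Gal(k^{\sep}/k)$-conjugates (which is the Galois closure over $k(T)$ and hence has field of moduli $k$), descend by Theorem~\ref{thm: main fod}(\ref{fld of def a}), and specialize via Propositions~\ref{prop:geom vs pot}(\ref{geom Gal pre}) and~\ref{intro:IGP-implications} in the Hilbertian case. The only cosmetic difference is in part~(\ref{part 2}), where you identify the group as a power of $G$ by a Goursat-style induction on subdirect products, while the paper phrases the same induction field-theoretically (a compositum of Galois extensions with simple group $G_1$ not contained in one another has group $G_1\times G_2$ since $L_1\cap L_2=K$); both are valid.
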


\begin{proof}
The RIGP property holds over the field $k^{\sep}$ by \cite{PopLarge}, since that field is ample (large); if $k$ is perfect it also follows from \cite[Corollary~1.5]{Ha1} since then $k^{\sep} = \overline k$. Thus there exists a $k^{\sep}$-regular Galois extension $F/k^{\sep}(T)$ of group $G$.
Consider the algebraic extension $F/k(T)$ and denote its
Galois closure by ${\mathcal F}/k(T)$. The extension ${\mathcal F}/k^{\sep}(T)$ is finite and Galois, say of group ${\mathcal G}$. As $F/k^{\sep}(T)$ is Galois, $G$ is a quotient of ${\mathcal G}$. Furthermore, as ${\mathcal F}/k(T)$ is Galois, the field of moduli of ${\mathcal F}/k^{\sep}(T)$ as a field extension is $k$. 

For the first assertion of part~(\ref{quot of geom Gal}), note that Theorem~\ref{thm: main fod}(\ref{fld of def a}) yields 
that ${\mathcal F}/k^{\sep}(T)$ 
descends to a field extension
${\mathcal F}_0/k(T)$ defined over $k$. 
By construction ${\mathcal F}_0/k(T)$ is geometrically Galois of group ${\mathcal G}$ and 
$G$ is a quotient of ${\mathcal G}$.

For the first assertion of (\ref{part 2}), assume that $G$ is simple. We may assume that $G$ is non-abelian, since every cyclic group of prime order is the Galois group of a $k$-regular Galois extension of $k(T)$.  

Since $F$ is defined over $E$ for some finite extension $E/k$, there are finitely many distinct conjugate fields $F^\sigma$ of $F$ over $k(T)$, as $\sigma$ ranges over $\Gal(k)$.  The field ${\mathcal F}$ is the compositum of these fields $F^\sigma$ in $\overline{k(T)}$.  

We claim that ${\mathcal G}=\Gal({\mathcal F}/k^{\sep}(T))$ is of the form $G^n$.  This follows by induction from the following elementary assertion: If $L_1,L_2$ are Galois field extensions of a field $K$ with Galois groups $G_1,G_2$ such that $G_1$ is simple, and if $L_1$ is not contained in $L_2$, then $\Gal(L_1L_2/K)=G_1 \times G_2$.  (This last assertion holds because the simplicity of $G_1$ implies that $L_1 \cap L_2 = K$).  

By Corollary~\ref{thm:fod}(\ref{geom gp monodromy}), it then follows that $G^n$ is a geometric 
Galois group over $k$.  

The last assertions in parts (\ref{quot of geom Gal}) and (\ref{part 2}), for $k$ Hilbertian, follow by
Proposition \ref{intro:IGP-implications}.
\end{proof}

\subsection{Extensions ${\mathcal F}/\overline k(T)$ with field of moduli $k$} \label{ssec:ext-fom} 
The notion of rigidity has been used to realize many finite groups as Galois groups over $\Qq$, or over small extensions of $\Qq$.  In this subsection we generalize that notion and apply Theorem~\ref{thm: main fod} in order to obtain sharper results along those lines, and to obtain results about geometric Galois groups and pre-Galois groups, in 
Theorem~\ref{thm:main wk rig}.
Here we work over subfields $k$ of $\Cc$, so that we can rely on the correspondence between branched covers and tuples of elements, given by Riemann's Existence Theorem.  

\begin{definition} \label{def:weak-rigid} 
Let ${\bf C}=(C_1,\ldots,C_r)$ be a tuple of conjugacy classes of a finite group $G$.  We say that ${\bf C}$ is {\it weakly rigid with respect to an embedding $G\hookrightarrow S_d$} if 
\renewcommand{\theenumi}{\alph{enumi}}
\renewcommand{\labelenumi}{(\alph{enumi})}
\begin{enumerate}
\item \label{wk rig a}
there are generators $g_1,\ldots,g_r$ of $G$ with $g_1\cdots g_r = 1$ and $g_i\in C_i$, $i=1,\ldots,r$, and
\item \label{wk rig b}
if $g^\prime_1,\ldots,g^\prime_r$ are generators of $G$ with the same properties, there exists $\omega \in S_d$ such that $g^\prime_i = \omega g_i \omega^{-1}$, $i=1,\ldots,r$.
\end{enumerate}
\end{definition}

In the special case where $G\hookrightarrow S_d$ is the regular representation, this is equivalent to the 
traditional notion of being {\em weakly rigid}, in which the conclusion of part~(\ref{wk rig b}) is replaced by the condition that for each choice of $g'_1,\ldots,g'_r$ there exists $\sigma \in \Aut(G)$ such that each $\sigma(g_i) = g_i'$.  (See \cite[Def.~2.15]{Vo96}.  If in addition $\sigma$ is an inner automorphism of $G$, i.e.\ if $\omega$ is in the image of $G$, then the tuple is {\em rigid}.)    
When that narrower condition of weak rigidity is satisfied, 
Theorem~\ref{thm: main fod} has the following consequence:

\begin{corollary} \label{cor: wk rig} 
Let $G$ be a finite group with a weakly rigid tuple of conjugacy classes.  Then
\renewcommand{\theenumi}{\alph{enumi}}
\renewcommand{\labelenumi}{(\alph{enumi})}
\begin{enumerate}
\item \label{geom Gal over abel}
$G$ is a geometric Galois group over $\Qq^{\rm ab}$ and a regular Galois group over some Galois extension of $\Qq^{\rm ab}$ of degree dividing $|{\rm Out}(G)|$.
\item \label{pre Gal over abel}
$G$ is a pre-Galois group over $\Qq^{\rm ab}$ and is a Galois group over some Galois extension of $\Qq^{\rm ab}$ of degree dividing $|{\rm Out}(G)|$.
\end{enumerate}
\end{corollary}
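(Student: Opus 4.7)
The plan is to construct via Riemann's existence theorem (RET) a Galois branched cover of $\Pp^1$ over $\overline\Qq$ whose field of moduli, as a field extension relative to $\overline\Qq/\Qq^{\rm ab}$, lies in $\Qq^{\rm ab}$, and then feed this datum into Theorem~\ref{thm: main fod} with $k=\Qq^{\rm ab}$.

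First I would pick $r$ distinct points $t_1,\ldots,t_r\in \Pp^1(\Qq)$ and apply RET to the weakly rigid tuple $(g_1,\ldots,g_r)$, producing a Galois extension ${\mathcal F}/\overline\Qq(T)$ of group $G$, branched exactly at the $t_i$ with local monodromy in the classes $C_i$. The heart of the proof is to show that the field of moduli $k_{\rm m}$ of ${\mathcal F}/\overline\Qq(T)$ as a field extension is contained in $\Qq^{\rm ab}$. I would argue via Fried's branch cycle lemma: for $\tau\in \Gal(\overline\Qq/\Qq^{\rm ab})$ and any lift $\tilde\tau$ to $\overline\Qq(T)$ fixing $T$, the conjugated extension ${\mathcal F}^{\tilde\tau}$ has the same branch locus (since each $t_i\in\Qq$) and its monodromy tuple lies in the classes $(C_1^{\chi(\tau)},\ldots,C_r^{\chi(\tau)})$, where $\chi$ is the cyclotomic character. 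Because $\chi$ is trivial on $\Gal(\overline\Qq/\Qq^{\rm ab})$ by Kronecker--Weber, the new tuple again lies in $(C_1,\ldots,C_r)$; weak rigidity then makes it $\Aut(G)$-conjugate to the original, so ${\mathcal F}^{\tilde\tau}$ is $\overline\Qq(T)$-isomorphic to ${\mathcal F}$ as an unmarked extension. Hence $k_{\rm m}\subset \Qq^{\rm ab}$.

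Now I would invoke Theorem~\ref{thm: main fod} with $k=\Qq^{\rm ab}$, $B=\Pp^1$, and ${\mathcal F}$ as above. Part~(a) produces a $\Qq^{\rm ab}$-regular extension $F/\Qq^{\rm ab}(T)$ with $F\,\overline\Qq={\mathcal F}$, automatically geometrically Galois of group $G$; so $G$ is a geometric Galois group over $\Qq^{\rm ab}$. Since ${\rm cd}(\Qq^{\rm ab})\leq 1$ (as recalled right after Theorem~\ref{thm: main fod}), part~(c) then descends ${\mathcal F}$ as a Galois extension to its field of moduli $k_G$, and by part~(b), $k_G/\Qq^{\rm ab}$ is a Galois extension whose group embeds into $\Out(G)$; in particular $[k_G:\Qq^{\rm ab}]$ divides $|\Out(G)|$. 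This realizes $G$ as a regular Galois group over $k_G$, completing part~(a).

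For part~(b), the geometrically Galois extension $F/\Qq^{\rm ab}(T)$ is pre-Galois over $\Qq^{\rm ab}(T)$ by Proposition~\ref{prop:geom vs pot}(\ref{geom Gal pre}); since $\Qq^{\rm ab}$ is Hilbertian (Kuyk's theorem), Proposition~\ref{intro:IGP-implications} delivers $G$ as a pre-Galois group over $\Qq^{\rm ab}$. The field $k_G$, being a finite extension of the Hilbertian field $\Qq^{\rm ab}$, is itself Hilbertian, so Hilbert irreducibility applied to the $k_G$-regular Galois extension of $k_G(T)$ with group $G$ realizes $G$ as a Galois group over $k_G$. The main obstacle is the branch cycle computation justifying $k_{\rm m}\subset\Qq^{\rm ab}$; everything else is a formal chain of applications of Theorem~\ref{thm: main fod} together with Propositions~\ref{prop:geom vs pot} and~\ref{intro:IGP-implications}.
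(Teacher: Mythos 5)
Your proposal is correct and follows essentially the same route as the paper: both construct, via Riemann's existence theorem and the branch cycle argument (the paper cites V\"olklein's Theorem~2.17 and Lemma~2.8 where you spell out the computation directly), a Galois extension of $\overline\Qq(T)$ with field of moduli $\Qq^{\rm ab}$ as a field extension, then apply Theorem~\ref{thm: main fod}(\ref{fld of def a}) for the geometric realization and parts (\ref{fom k_G}), (\ref{fld of def b}) with $\cd(\Qq^{\rm ab})=1$ for the regular realization over $k_G$ with $[k_G:\Qq^{\rm ab}]$ dividing $|\Out(G)|$, and finally deduce part~(\ref{pre Gal over abel}) by specialization over the Hilbertian field $\Qq^{\rm ab}$ using Propositions~\ref{prop:geom vs pot}(\ref{geom Gal pre}) and~\ref{intro:IGP-implications}.
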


\begin{proof}
Since the given tuple of conjugacy classes ${\bf C}=(C_1,\ldots,C_r)$ is weakly rigid, \cite[Theorem~2.17]{Vo96} applies.  That is, up to isomorphism of field extensions, there is a unique finite extension $L/\Cc(T)$ that is Galois with group $G$ and that corresponds to a branched cover $X \to \Pp^1_{\overline\Qq}$, with given rational branch points, and having branch cycle description $(g_1,\dots,g_r)$ for some $g_i \in C_i$.  Since the branch points are rational, this branched cover descends to $\overline\Qq(T)$.  Moreover the field of moduli (relative to $\overline\Qq/\Qq^{\rm ab}$) of the corresponding field extension of $\overline\Qq(T)$ is equal to $\Qq^{\rm ab}$, since $\Gal(\Qq^{\rm ab})$ acts on branch cycle descriptions by preserving conjugacy classes (cf.~\cite[Lemma~2.8]{Vo96}).  

The first assertion in part~(\ref{geom Gal over abel}) of the corollary now follows from 
Theorem~\ref{thm: main fod}(\ref{fld of def a}).  The second assertion in part~(\ref{geom Gal over abel}) follows from Theorem~\ref{thm: main fod}(\ref{fom k_G}, \ref{fld of def b}), using that $\cd(\Qq^{\rm ab})=1$. Part~(\ref{pre Gal over abel}) of the corollary follows from part~(\ref{geom Gal over abel}) by specialization in the Hilbertian field $\Qq^{\rm ab}$ (see Proposition~\ref{intro:IGP-implications}) in combination with Proposition~\ref{prop:geom vs pot}(\ref{geom Gal pre}). 
\end{proof}

In Theorem~\ref{thm:main wk rig} below, we prove a more general result using the broader notion given in Definition~\ref{def:weak-rigid}.
First we extend the usual notion of {\em rationality} (see \cite[Definition~3.7]{Vo96}) to that more general setting.

\begin{definition} \label{def:C-k-rational} The $r$-tuple ${\bf C}$ 
is {\it weakly rational with respect to an embedding $G\hookrightarrow S_d$} if  for each $m\in (\Zz/d\Zz)^\times$, $C_1^m,\ldots,C_r^m$ is a permutation of $C_1,\ldots,C_r$, up to conjugation by an element $\omega\in {\rm Nor}_{S_d}(G)$. Given a subfield $k\subset \Cc$, ${\bf C}$ is {\it weakly $k$-rational with respect to $G\hookrightarrow S_d$} if the condition holds for values $m$ of the cyclotomic character $\chi_k: \Gabs(k) \rightarrow (\Zz/d\Zz)^\times$. 
\end{definition}

For short, we just say {\it weakly rigid}, 
{\it weakly rational} and {\it weakly $k$-rational} if $G\hookrightarrow S_d$ is the regular representation of $G$; in this case, conjugating by some element $\omega \in {\rm Nor}_{S_d}(G)$ is equivalent to acting by some automorphism $\gamma \in {\rm Aut}(G)$. The conditions are then weaker than with any other embedding $G\hookrightarrow S_d$. The classical  
rational and $k$-rational notions correspond to the special situation where one can take
$\omega \in S_d$ to be in (the image of) $G$, in the definition above.

Recall the classical action of $\Gabs(k)$ on the conjugacy classes $C$ of $G$: for each $\tau \in \Gabs(k)$, $C$ is mapped to $C^{1/\chi_k(\tau)}$ where $\chi_k:\Gabs(k) \rightarrow (\Zz/d\Zz)^\times$ is the cyclotomic character modulo $d$: that is $\zeta_d^\tau = \zeta_d^{\chi_k(\tau)}$ where $\zeta_d=e^{2i\pi/d}$. 

We use the notation ${\bf C}$ for a $r$-tuple $(C_1,\ldots,C_r)$ of conjugacy classes of $G$ and ${\bf t}$ for a $r$-tuple $(t_1,\ldots,t_r)$ of distinct points in $\Pp^1(\overline k)$. We always assume that the sets $\{C_1,\ldots,C_r\}$ and $\{t_1,\ldots,t_r\}$ are invariant under the action of $\Gabs(k)$. For $\tau \in \Gabs(k)$ and $i=1,\ldots,r$, denote by $\tau(i)$ the index such that $t_i^\tau = t_{\tau(i)}$.

\begin{definition} \label{def:weak-rat-ram-type}
A triple $[G\hookrightarrow S_d ,{\bf C}, {\bf t}]$ is {\it weakly $k$-rational} if
for each $\tau \in \Gabs(k)$, there exists some element $\omega_\tau \in {\rm Nor}_{S_d}(G)$ such that 
\[C_{\tau(i)}^{\chi_k(\tau)} = C_i^{\omega_\tau}, \ \ i=1,\ldots,r.\]
It is {\it $k$-rational} if one can take $\omega_\tau=1$ ($\tau\in \Gabs(k)$).
\end{definition}

Definition \ref{def:weak-rat-ram-type} generalizes definitions from \cite{Vo96} of {\it (weakly) $k$-rational type}  for which  $G\hookrightarrow S_d$ is the regular representation: see Definition~3.7 and Remark~3.9(b) there.

If $[G\hookrightarrow S_d, {\bf C}, {\bf t}]$ is weakly $k$-rational (\hbox{resp.} is $k$-rational), 
then in particular the $r$-tuple ${\bf C}$ is weakly $k$-rational (\hbox{resp.} is $k$-rational) with respect to the 
embedding $G\hookrightarrow S_d$.  The former condition is stronger in that the permutation of $C_1,...,C_r$ involved in 
Definition \ref{def:weak-rat-ram-type}
 should be the permutation induced by the action of $\tau$ on the branch points, for any $\tau \in \Gabs(k)$. 

Triples $[G\hookrightarrow S_d, {\bf C}, {\bf t}]$ are used to represent the {\it ramification invariants} of an extension 
 ${\mathcal F}/\overline k(T)$: $G$ is the monodromy group, $G\hookrightarrow S_d$ the monodromy action, ${\bf t}$ the branch point tuple and ${\bf C}$ the tuple of inertia canonical classes (in the Galois closure), with ${\bf t}$ and ${\bf C}$ ordered in such a way that $C_i$ corresponds to $t_i$, $i=1,\ldots,r$.

\smallskip

The following lemmas adjust classical rigidity statements to the non-Galois situation.

\begin{lemma} \label{thm:weakl-rigid-criterion} Let  ${\mathcal F}/\overline k(T)$ be an extension such that its ramification invariant $[G\hookrightarrow S_d, {\bf C}, {\bf t}]$ is weakly $k$-rational and ${\bf C}$ is weakly rigid \hbox{\it w.r.t.} $G\hookrightarrow S_d$. 
Then ${\mathcal F}/\overline k(T)$ has field of moduli $k$ as a field extension.
\end{lemma}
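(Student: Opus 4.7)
The plan is to use Riemann's Existence Theorem (RET) to encode $\mathcal{F}/\overline k(T)$ by a branch cycle description, and then to combine the weak $k$-rationality of the ramification data with the weak rigidity of ${\bf C}$ to show that every $\tau\in\Gabs(k)$ preserves $\mathcal{F}$ up to $\overline k(T)$-isomorphism; equivalently, that $\Gabs(k)\subset M_{\mathrm m}(\mathcal{F}/\overline k(T))$, whence the field of moduli collapses to $k$. Fix $\tau\in\Gabs(k)$ and a prolongation $\tilde\tau$ of $\tau$ to an automorphism of $\overline k(T)^{\sep}$ over $k(T)$. By RET, the $\overline k(T)$-isomorphism class of $\mathcal{F}$ is recorded by the triple $[G\hookrightarrow S_d,{\bf C},{\bf t}]$ together with a tuple $(g_1,\dots,g_r)$ satisfying $g_i\in C_i$, $g_1\cdots g_r=1$, and $\langle g_1,\dots,g_r\rangle=G$, taken modulo simultaneous $S_d$-conjugation; part (\ref{wk rig a}) of Definition~\ref{def:weak-rigid} guarantees that such a tuple exists.

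By the classical branch cycle argument of Fried, the conjugate extension $\mathcal{F}^{\tilde\tau}$ has branch points $t_i^\tau=t_{\tau(i)}$ and admits a branch cycle description $(h_1,\dots,h_r)$ with $h_i$ lying in the class $C_{\tau(i)}^{\chi_k(\tau)}$, with $h_1\cdots h_r=1$, and with $\langle h_1,\dots,h_r\rangle=G$. The weak $k$-rationality of $[G\hookrightarrow S_d,{\bf C},{\bf t}]$ now supplies an element $\omega_\tau\in\Nor_{S_d}(G)$ with $C_{\tau(i)}^{\chi_k(\tau)}=C_i^{\omega_\tau}$ for every $i$. Conjugating $(h_1,\dots,h_r)$ globally by $\omega_\tau^{-1}$ (which produces an equivalent branch cycle description, since RET identifies descriptions up to simultaneous $S_d$-conjugation) yields a representative $(g'_1,\dots,g'_r)$ for $\mathcal{F}^{\tilde\tau}$ with $g'_i\in C_i$, $g'_1\cdots g'_r=1$, and $\langle g'_1,\dots,g'_r\rangle=G$.

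Weak rigidity of ${\bf C}$ with respect to $G\hookrightarrow S_d$ (Definition~\ref{def:weak-rigid}(\ref{wk rig b})) then delivers $\omega\in S_d$ such that $g'_i=\omega g_i\omega^{-1}$ for all $i$, so the branch cycle descriptions of $\mathcal{F}$ and $\mathcal{F}^{\tilde\tau}$ coincide up to simultaneous conjugation. Invoking RET in the reverse direction produces the desired $\overline k(T)$-isomorphism $\mathcal{F}\xrightarrow{\sim}\mathcal{F}^{\tilde\tau}$. Since $\tau\in\Gabs(k)$ was arbitrary, this proves that the field of moduli of $\mathcal{F}/\overline k(T)$ as a field extension equals $k$.

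The main obstacle I anticipate is bookkeeping for conventions: one must pin down carefully whether the action of $\tau$ on inertia classes is via $\chi_k(\tau)$ or its inverse, whether the permutation of branch points to track is $\tau(i)$ or $\tau^{-1}(i)$, and how the bouquet of loops used to extract a branch cycle description is transported by the braid action when the branch points are permuted. Most substantively, one must justify that RET legitimately allows conjugation by elements of $S_d\smallsetminus G$ in equating branch cycle descriptions; it is precisely this point that makes the broader notion of weak rigidity relative to $G\hookrightarrow S_d$ (rather than merely relative to the regular representation of $G$) the correct hypothesis in this non-Galois setting.
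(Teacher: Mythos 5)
Your proof is correct and follows essentially the same route as the paper's: apply the branch cycle argument to the Galois conjugate $\mathcal F^{\tilde\tau}$, use weak $k$-rationality to return its ramification data to ${\bf C}$ via some $\omega_\tau\in\Nor_{S_d}(G)$, and invoke weak rigidity with respect to $G\hookrightarrow S_d$ to identify the two extensions up to $\overline k(T)$-isomorphism. The point you flag at the end --- that $S_d$-conjugation (not merely $G$-conjugation) is the correct equivalence on branch cycle descriptions of degree-$d$ non-Galois covers --- is exactly the reason the paper formulates weak rigidity relative to the embedding, so your resolution of it is the intended one.
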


\begin{proof}
In the case $G\hookrightarrow S_d$ is the regular representation, a proof is given in Theorem~3.8 
and Remark~3.9(b)-(c) in \cite{Vo96}. 
We consider a more general embedding $G\hookrightarrow S_d$. 

Every $\tau\in \Gabs(k)$ maps the extension ${\mathcal F}/\overline k(T)$ to some conjugate extension ${\mathcal F}^\tau/\overline k(T)$, which is of 
ramification invariant  
\[[\, G\hookrightarrow S_d, \hskip 2pt (C_{1}^{1/\chi_k(\tau)},\ldots, C_{r}^{1/\chi_k(\tau)}), \hskip 2pt (t_1^\tau,\ldots, t_r^\tau))\,];\]
or, after reordering $(t_1^\tau,\ldots, t_r^\tau)$ in $(t_1,\ldots, t_r)$,
\[[\, G\hookrightarrow S_d, \hskip 2pt (C_{\tau(1)}^{\chi_k(\tau)},\ldots, C_{\tau(r)}^{\chi_k(\tau)}), \hskip 2pt (t_1,\ldots, t_r))\,].\]
Due to the weak $k$-rationality assumption, this triple is $[G\hookrightarrow S_d, {\bf C}^{\omega_\tau}, {\bf t}]$ for some $\omega_\tau\in {\rm Nor}_{S_d}(G)$.  By the weak rigidity assumption, there is a unique isomorphism class of extensions of $\overline k(T)$ with this ramification invariant. Therefore ${\mathcal F}^\tau/\overline k(T)$ is $\overline k(T)$-conjugate to ${\mathcal F}/\overline k(T)$. As this holds for every $\tau \in \Gabs(k)$, the field of moduli of ${\mathcal F}/\overline k(T)$ is $k$.
\end{proof}

\begin{lemma} \label{lem:construct-t} Let ${\bf C}$ be an $r$-tuple of conjugacy classes of $G$. If ${\bf C}$ is weakly $k$-rational \hbox{\it w.r.t.} an embedding $G\hookrightarrow S_d$, then there exists an $r$-tuple ${\bf t}\subset \Pp^1(\overline k)$ such that the triple $[G\hookrightarrow S_d, {\bf C}, {\bf t}]$ is weakly $k$-rational.
\end{lemma}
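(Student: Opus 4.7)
The plan is two-step: first extract from the weak $k$-rationality of ${\bf C}$ a continuous action of $\Gabs(k)$ on $I:=\{1,\dots,r\}$, and then realize that action geometrically by a tuple ${\bf t}$ of distinct points in $\Pp^1(\overline k)$.

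For the first step, the cyclotomic character $\chi_k$ factors through the finite quotient $\Delta:=\Gal(k(\zeta_d)/k)\hookrightarrow (\Zz/d\Zz)^\times$, so the permutations and normalizer elements supplied by weak $k$-rationality depend only on the image of $\tau$ in $\Delta$. By hypothesis, for each $\tau\in\Gabs(k)$ we have $\nu_\tau\in{\rm Perm}(I)$ and $\omega_\tau\in\Nor_{S_d}(G)$ with $C_i^{\chi_k(\tau)}=C_{\nu_\tau(i)}^{\omega_\tau}$ for all $i$. Using $(C^m)^{m'}=C^{mm'}$ and the fact that powering commutes with conjugation, a direct calculation shows that $(\nu_{\tau'}\circ\nu_\tau,\,\omega_{\tau'}\omega_\tau)$ and $(\nu_{\tau\tau'},\omega_{\tau\tau'})$ both satisfy the identity attached to $\tau\tau'$; hence on the quotient $\bar I$ of $I$ by the equivalence $i\sim j\Leftrightarrow C_i=C_j$, the induced $\bar\nu_\tau$ form a genuine anti-homomorphism $\Delta\to{\rm Perm}(\bar I)$. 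A coherent lift (fix a total order on each fiber of $I\to\bar I$ and let $\nu_\tau$ act as the order-preserving bijection between the relevant fibers) produces an anti-homomorphism $\nu\colon\Delta\to{\rm Perm}(I)$ still satisfying $C_i^{\chi_k(\tau)}=C_{\nu_\tau(i)}^{\omega_\tau}$ (after adjusting the $\omega_\tau$ inside $\Nor_{S_d}(G)$ if necessary). Setting $\pi_\tau=\nu_\tau^{-1}$ yields a $\Delta$-action on $I$ satisfying $C_{\pi_\tau(i)}^{\chi_k(\tau)}=C_i^{\omega_\tau}$ for every $\tau$ and $i$.

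For the second step, let $L=k(\zeta_d)$, so that $\Gabs(k)$ acts on $\Pp^1(L)$ through $\Delta$. For each $\pi$-orbit $O\subset I$ choose $i_O\in O$ with stabilizer $H_O\subset\Delta$, and set $L_O=L^{H_O}$. Pick $t_{i_O}\in\Pp^1(L_O)$ whose exact $\Delta$-stabilizer is $H_O$ (any sufficiently general point works, e.g.\ the image of a primitive element of $L_O/k$), and arrange the choices for different orbits so that the resulting $\Delta$-orbits in $\Pp^1(L)$ are pairwise disjoint; this is possible because $\Pp^1(L)$ is infinite. For $i\in O$, pick any $\tau_i\in\Delta$ with $\pi_{\tau_i}(i_O)=i$ and set $t_i:=t_{i_O}^{\tau_i}$. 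The stabilizer property makes $t_i$ independent of the choice of $\tau_i$, and a routine computation with the Galois action yields $t_i^\tau=t_{\pi_\tau(i)}$ for every $\tau\in\Gabs(k)$ and $i\in I$. The $t_i$ are pairwise distinct by construction, so ${\bf t}=(t_1,\dots,t_r)$ is an honest $r$-tuple of $\overline k$-points whose set is $\Gabs(k)$-stable. In the paper's notation we have $\tau(i)=\pi_\tau(i)$, and substituting into the class identity produced in the first step gives $C_{\tau(i)}^{\chi_k(\tau)}=C_i^{\omega_\tau}$, i.e.\ the triple $[G\hookrightarrow S_d,{\bf C},{\bf t}]$ is weakly $k$-rational.

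The main obstacle is the bookkeeping in the first step: the pairs $(\nu_\tau,\omega_\tau)$ supplied by the hypothesis are only defined up to ambiguities coming from repeated classes among the $C_i$, so rigidifying them into a bona fide group action on $I$ requires either an explicit transversal choice (as sketched) or a cocycle-vanishing argument. Once the Galois action on $I$ is in hand, the geometric realization in step two is a standard descent via the subfield tower $k\subset L_O\subset L$.
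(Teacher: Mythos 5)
Your step 2 --- realizing a prescribed continuous $\Gabs(k)$-action on the index set $I$ by a tuple of points of $\Pp^1(\overline k)$, via primitive elements of the subfields $L_O$ of $k(\zeta_d)$ --- is fine, and is essentially the construction in the source the paper cites (V\"olklein, Lemma~3.16). The gap is in step 1, at the sentence ``hence on the quotient $\bar I$ of $I$ by the equivalence $i\sim j\Leftrightarrow C_i=C_j$, the induced $\bar\nu_\tau$ form a genuine anti-homomorphism.'' Your direct calculation gives $C_i^{\chi_k(\tau\tau')}=C_{\nu_{\tau'}\nu_\tau(i)}^{\omega_{\tau'}\omega_\tau}$ and, by hypothesis, $C_i^{\chi_k(\tau\tau')}=C_{\nu_{\tau\tau'}(i)}^{\omega_{\tau\tau'}}$; comparing them yields only $C_{\nu_{\tau'}\nu_\tau(i)}=C_{\nu_{\tau\tau'}(i)}^{\delta}$ with $\delta$ a product of the various $\omega$'s, an element of $\Nor_{S_d}(G)$. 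Unless $\delta\in G\,\Cen_{S_d}(G)$, these two classes need not be \emph{equal}, only $\Nor_{S_d}(G)$-conjugate, so $\bar\nu_{\tau'}\bar\nu_\tau$ and $\bar\nu_{\tau\tau'}$ can differ as permutations of $\bar I$. Your transversal trick only repairs ambiguity inside the fibers of $I\to\bar I$; it does not address this failure of the composition law on $\bar I$ itself.

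The obvious repair --- coarsening the equivalence to ``$C_i$ and $C_j$ are $\Nor_{S_d}(G)$-conjugate,'' where the power maps do induce an honest action of $\Gal(k(\zeta_d)/k)$ --- loses exactly what Definition~\ref{def:weak-rat-ram-type} demands: after lifting that coarser action to $I$ one only obtains $C_{\tau(i)}^{\chi_k(\tau)}=C_i^{\omega_{\tau,i}}$ with a conjugator depending on $i$, whereas weak $k$-rationality of the triple requires a \emph{single} $\omega_\tau$ serving all $i=1,\dots,r$. So the actual content of the lemma is to choose, for each $\tau$ in the finite quotient $\Gal(k(\zeta_d)/k)$, a pair $(\pi_\tau,\omega_\tau)$ so that $\tau\mapsto\pi_\tau$ is multiplicative while retaining one $\omega_\tau$ per $\tau$; equivalently, to lift $\chi_k$ through the extension of ${\rm im}(\chi_k)$ by the group of permutations $\pi$ of $I$ satisfying $C_{\pi(i)}=C_i^{\omega}$ for a single $\omega$. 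That is precisely the point you defer to an unproved ``cocycle-vanishing argument,'' and it is where the whole difficulty of the weak (as opposed to the $k$-rational) case sits. For comparison, the paper does not rederive the construction: it cites V\"olklein's Lemma~3.16 and asserts that it generalizes once the classes are regarded modulo $\Nor_{S_d}(G)$. Your self-contained write-up usefully exposes the delicate step, but it does not close it.
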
 

\begin{proof}
The construction is explained in Lemma 3.16 of \cite{Vo96} in the case $G\hookrightarrow S_d$ is the regular representation and easily generalizes to our situation: the only change is that the conjugacy classes should be regarded modulo the action of ${\rm Nor}_{S_d}(G)$.
\end{proof}

\begin{theorem} \label{thm:main wk rig}
Let $G$ be a finite group and $k$ be a subfield of $\Cc$.
\renewcommand{\theenumi}{\alph{enumi}}
\renewcommand{\labelenumi}{(\alph{enumi})}
\begin{enumerate}
\item \label{wk rig wk rat}
Assume $G$ has a weakly rigid tuple ${\bf C}$ that is also weakly $k$-rational. 
Then $G$ is a geometric Galois group over $k$, and is 
a regular Galois group over 
for some Galois extension $\widehat k$ of $k$ of degree 
$[\widehat k:k]$ dividing $|{\rm Aut}(G)|$. 
\item \label{wk rig trans emb}
Assume $G$ has a tuple ${\bf C}$ that is weakly rigid with respect to some transitive embedding $G\hookrightarrow S_d$ and is weakly $k$-rational for some field $k$ with respect to the same embedding. Suppose that 
the exact sequence 
$1 \to Z(G) \to G \to \Inn(G) \to 1$ is split
or that $k$ is of cohomological dimension ${\rm cd}(k)\leq 1$.  Then 
$G$ is a regular Galois group over some Galois extension $k_G$ of $k$ of degree dividing
$|{\rm Nor}_{S_d}(G)/G\hskip 1pt {\rm Cen}_{S_d}(G)|$ and hence $|{\rm Out}(G)|$.
\end{enumerate}
\end{theorem}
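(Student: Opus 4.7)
My plan is to prove parts (a) and (b) by a common template: first construct a $\overline k$-regular extension ${\mathcal F}/\overline k(T)$ with the prescribed ramification invariants and whose field of moduli (as a field extension) is $k$, and then invoke the appropriate layers of Theorem \ref{thm: main fod}.

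For the construction, I would start with the weakly rigid tuple ${\bf C}$ with respect to the given transitive embedding $G \hookrightarrow S_d$ (the regular representation in part (a)). Applying Lemma \ref{lem:construct-t}, I obtain a branch point tuple ${\bf t} \subset \Pp^1(\overline k)$ such that the full triple $[G \hookrightarrow S_d, {\bf C}, {\bf t}]$ is weakly $k$-rational. Riemann's Existence Theorem, applied to the generating tuple $(g_1,\ldots,g_r)$ provided by Definition \ref{def:weak-rigid}(\ref{wk rig a}), then produces a $\overline k$-regular extension ${\mathcal F}/\overline k(T)$ with monodromy group $G$, branch point set ${\bf t}$, and inertia canonical classes ${\bf C}$. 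By Lemma \ref{thm:weakl-rigid-criterion}, the field of moduli of ${\mathcal F}/\overline k(T)$ as a field extension is exactly $k$. Since $k \subset \Cc$ is infinite, $\Pp^1(k)$ is Zariski-dense in $\Pp^1_k$, so the running hypothesis of Theorem \ref{thm: main fod} is satisfied.

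For part (a), the embedding is the regular representation, so ${\mathcal F}$ is itself Galois over $\overline k(T)$ and coincides with its Galois closure $N$. Theorem \ref{thm: main fod}(\ref{fld of def a}) then supplies both conclusions simultaneously: $N$ descends to $k$ as a field extension, yielding a geometrically Galois extension of $k(T)$ with monodromy group $G$, so that $G$ is a geometric Galois group over $k$; and $N$ descends to some $\widehat k$ as a Galois extension, for a Galois extension $\widehat k/k$ with $\Gal(\widehat k/k) \hookrightarrow \Aut(G)$, yielding $G$ as a regular Galois group over $\widehat k$ with $[\widehat k:k]$ dividing $|\Aut(G)|$. For part (b), the embedding $G \hookrightarrow S_d$ need not be regular, so ${\mathcal F}$ may fail to be Galois; however, its Galois closure $N/\overline k(T)$ is still Galois with group $G$. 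By Theorem \ref{thm: main fod}(\ref{fom k_G}), the field of moduli $k_G$ of $N$ as a Galois extension is a Galois extension of $k$ whose Galois group embeds into $\Nor_{S_d}(G)/(G \, \Cen_{S_d}(G))$; the order of the latter divides $|\Out(G)|$ by Remark \ref{rk for main fod}(\ref{Nor Out}). The splitting or cohomological-dimension hypothesis then lets me invoke Theorem \ref{thm: main fod}(\ref{fld of def b}), concluding that $N$ descends to $k_G$ as a Galois extension, which realizes $G$ as a regular Galois group over $k_G$ with the claimed degree bound.

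The main subtlety I anticipate is keeping straight the two distinct layers of ramification data in part (b): the (possibly non-Galois) cover ${\mathcal F}$, which carries the degree-$d$ monodromy information needed to apply Theorem \ref{thm: main fod}(\ref{fom k_G}) with the sharp bound $|\Nor_{S_d}(G)/(G \,\Cen_{S_d}(G))|$, versus its Galois closure $N$, which is what actually descends to yield a regular Galois realization. Beyond that, there is no real obstacle: every nontrivial ingredient, from Riemann's Existence Theorem to the field of moduli criterion (Lemma \ref{thm:weakl-rigid-criterion}) to the descent conclusions (Theorem \ref{thm: main fod}), has already been cited or established in the excerpt, and the proof amounts to orchestrating these pieces.
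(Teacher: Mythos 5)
Your proposal is correct and follows essentially the same route as the paper: Lemma \ref{lem:construct-t} to choose ${\bf t}$, Riemann's Existence Theorem to produce ${\mathcal F}/\overline k(T)$, Lemma \ref{thm:weakl-rigid-criterion} to get field of moduli $k$, and then Theorem \ref{thm: main fod}(\ref{fld of def a}) for part (a) versus Theorem \ref{thm: main fod}(\ref{fom k_G},\ref{fld of def b}) together with Remark \ref{rk for main fod}(\ref{Nor Out}) for part (b). Your explicit remarks that the Zariski-density hypothesis of Theorem \ref{thm: main fod} holds because $k\subset\Cc$ and that part (a) is the special case of the regular representation (where ${\mathcal F}$ equals its Galois closure) are exactly the points the paper relies on.
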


\begin{proof} 
We will show that the assumptions guarantee that there is an extension ${\mathcal F}/\overline k(T)$ with field of moduli $k$ and then apply Theorem \ref{thm: main fod}. 
Fix a finite group $G$, a transitive embedding $G\hookrightarrow S_d$, an integer $r\geq 2$ 
and a subfield $k\subset \Cc$. 

Assume as in part~(\ref{wk rig trans emb}) that $G$ is given with an $r$-tuple ${\bf C}$ of conjugacy classes that is weakly $k$-rational and weakly rigid with respect to an embedding $G\hookrightarrow S_d$.  By Lemma \ref{lem:construct-t}, there is an $r$-tuple ${\bf t}\subset \Pp^1(\overline k)$ such that $[G\hookrightarrow S_d, {\bf C}, {\bf t}]$ is weakly $k$-rational. Consider next an extension ${\mathcal F}/\overline{k}(T)$ of degree $d$, of monodromy group $G\hookrightarrow S_d$, of branch point set ${\bf t}$, and corresponding canonical inertia invariant ${\bf C}$; the existence of such an extension is guaranteed by the Riemann Existence Theorem.  It follows from Lemma \ref{thm:weakl-rigid-criterion} that ${\mathcal F}/{\overline k}(T)$ has
field of moduli $k$ as a field extension.  By Theorem~\ref{thm: main fod}(\ref{fld of def b}), if $N/\overline k(T)$ is the Galois closure of  ${\mathcal F}/{\overline k}(T)$, then $N/\overline k(T)$ descends to its field of moduli $k_G$ as a Galois extension; this is a regular realization of $G$, as asserted in part~(\ref{wk rig trans emb}).
The assertion about the degree then follows from Theorem~\ref{thm: main fod}(\ref{fom k_G}) and Remark~\ref{rk for main fod}(\ref{Nor Out}).

For part~(\ref{wk rig wk rat}), we simply note that ``weakly rigid and weakly $k$-rational'' 
is the same as ``weakly rigid and weakly $k$-rational with respect to the regular representation $G\hookrightarrow S_{|G|}$''.
We can then proceed as above but use Theorem~\ref{thm: main fod}(\ref{fld of def a}) instead of Theorem~\ref{thm: main fod}(\ref{fom k_G},\ref{fld of def b}). 
\end{proof}

Note that Corollary~\ref{cor: wk rig}(\ref{geom Gal over abel}) is the special case of Theorem~\ref{thm:main wk rig} for which $k=\Qq^{\rm ab}$ and the embedding $G\hookrightarrow S_d$ in (\ref{wk rig trans emb}) is given by the regular representation.  Again this uses that ${\rm cd}(\Qq^{\rm ab}) = 1$, along with the fact that $\Gal(\Qq^{\rm ab})$ acts on branch cycle descriptions by preserving conjugacy classes (i.e., ${\bf C}$ is $\Qq^{\rm ab}$-rational).

%%%%%%%%%%%%%%%%%%%%%%%%%%%%%%

\end{document}